\documentclass[12pt]{article}
\usepackage{epstopdf}
\usepackage[labelformat=empty]{caption}

\usepackage{amsmath,amssymb}
\usepackage{amsthm}
\usepackage{graphicx,tikz}
\usepackage{enumerate}

\numberwithin{equation}{section}
%


\newtheorem{Thm}{Theorem}[section]
\newtheorem{Prop}[Thm]{Proposition}
\newtheorem{Lem}[Thm]{Lemma}
\newtheorem{Cor}[Thm]{Corollary}
\newtheorem{Fact}[Thm]{Fact}
\newtheorem{Exa}[Thm]{Example}
\newtheorem{LemDef}[Thm]{Lemma-Definition}

\theoremstyle{remark}

\newtheorem{Rem}[Thm]{Remark}

\theoremstyle{definition}

\newtheorem{Def}[Thm]{Definition}
\newtheorem*{Def*}{Definition}

\numberwithin{equation}{section}

\newcommand{\g}[1]{{\mbox{\goth #1}}}
\newcommand{\m}[1]{\mathbb{ #1}}
\newcommand{\mc}[1]{\mathcal{ #1}}
\newcommand{\gs}[1]{{\mbox{\gots #1}}}

\newcommand{\Ind}{{\operatorname{Ind}}}
\newcommand{\Tr}{{\operatorname{Trace}}}

\def\al{\alpha}       \def\be{\beta}        \def\ga{\gamma}
       \def\eps{\varepsilon}  
       \def\la{\lambda}      
\def\si{\sigma}                
\def\ph{\varphi}               
              \def\De{\Delta}

\newcommand{\rmd}{{\,\rm d}}

\theoremstyle{definition}

\theoremstyle{remark}

\newtheorem{Rmq}[Thm]{Remark}

\numberwithin{equation}{section}
\newfont{\goth}{eufm10 at 12pt}
\newfont{\gots}{eufm8 at 9pt}

\def\bt{\begin{Thm}}
\def\et{\end{Thm}}
\def\br{\begin{Rmq}}
\def\er{\end{Rmq}}

\def\bc{\begin{Cor}}
\def\ec{\end{Cor}}
\def\bp{\begin{Prop}}
\def\ep{\end{Prop}}
\def\bl{\begin{Lem}}
\def\el{\end{Lem}}
\def\bd{\begin{Def}}
\def\ed{\end{Def}}
\def\bq{\begin{quotation}}
\def\eq{\end{quotation}}
\def\bfa{\begin{Fact}}
\def\efa{\end{Fact}}

\def\ra{\rightarrow}
\def\vs{\vspace{1em}}

\setcounter{tocdepth}{2}
\bibliographystyle{abbrv}

\begin{document}
\title{
Tempered homogeneous spaces II
}
\author{Yves Benoist and Toshiyuki Kobayashi
}
\date{}
\maketitle
\vspace{-2em}
\centerline{\footnotesize 
\mbox{ }\hfill Dedicated to G. Margulis.}

\begin{abstract}{
\noindent Let $G$ be a connected semisimple Lie group with finite center  and $H$ a connected closed subgroup.
We establish a geometric criterion which detects whether  the  representation  of $G$
in $L^2(G/H)$ is tempered.
}\end{abstract}
{\footnotesize \tableofcontents}

\section{Introduction}
\label{secintrod}

This article 
is the sequel of our paper \cite{BeKoI} dealing with  harmonic analysis 
on homogeneous spaces $G/H$ of semisimple Lie groups $G$. 
In the first paper \cite{BeKoI} we studied the regular representation 
of $G$ in $L^2(G/H)$ when both $G$ and $H$ are semisimple groups.
The main result of \cite{BeKoI} is a geometric criterion which detects whether  
the representation of $G$ in $L^2(G/H)$ is tempered. 
The aim of the present paper is to extend this geometric criterion 
to the whole generality that $H$ is an arbitrary closed connected subgroup.

In this introduction we will discuss the following questions:
\begin{itemize}
\item Why to care about tempered representations of semisimple Lie groups?
\item What is our temperedness criterion for the homogeneous space $G/H$?
\item What are the main ideas and ingredients in the proof of this criterion? 
\end{itemize}

\subsection{Tempered representations}
\label{sectemrep}
\hspace*{0em}\let\thefootnote\relax\footnote{\hspace*{-2em}
Key words: Lie groups, homogeneous spaces, tempered 
representations, matrix coefficients,
unitary representations\\
\textit{MSC (2010):}\enspace
Primary 22E46; 
Secondary 43A85,
22F30. }
Let $G$ be a semisimple Lie group with finite center
and $K$ a maximal compact subgroup of $G$.
Understanding unitary representations of $G$
in Hilbert spaces is a major topic 
of research since the beginning of the $20^{\rm th}$ century.
Its history includes the works of Cartan, Weyl, Gelfand,
Harish-Chandra, Helgason, Langlands, Vogan and many others.
The main motivations came from quantum physics, analysis and number theory.

Among unitary representations of $G$ a smaller class called 
{\it tempered representations} plays a crucial role. 
Let us recall why they are so useful.
\begin{itemize}
\item By definition  tempered representations are those 
which are weakly contained 
in the regular representation of $G$ in $L^2(G)$.
Therefore a unitary representation of $G$ is tempered 
if and only if its disintegration into irreducible 
unitary representations involves only 
tempered representations.
\item Tempered representations are those for which $K$-finite matrix coefficients belong to $L^{2+\eps}(G)$ for all $\eps>0$. This definition does not look so enlightening
but equivalently these matrix coefficients are bounded by an explicit 
multiple of an explicit spherical function $\Xi$, see \cite{CHH}.\
\item Classification of irreducible tempered representations of $G$
was accomplished by
Knapp and Zuckerman in \cite{KnZu82},
while non-tempered irreducible unitary representations 
have not yet been completely understood.
\item Tempered representations are a cornerstone of the Langlands classification 
of admissible irreducible representations of $G$
in \cite{Lan89}, see also \cite{Kn01}.
\item Irreducible tempered representations $\pi$
can be characterized in term of the {\it leading exponents}: these  exponents
must be a positive linear combination of negative roots, see \cite[Chap.~8]{Kn01}.
\item One can also characterize them in terms of the 
{\it distribution character of $\pi$}: this  character must
be a tempered distribution on $G$, 
see \cite[Chap.~12]{Kn01}. 
\item Tempered representations are closed under induction, restriction,
tensor product, and direct integral of unitary representations. 
\item The Kirilov--Kostant orbit methods works fairly well for tempered representations,
 see \cite{HaTo} for example. 
\end{itemize}

\subsection{The regular representation
in $L^2(G/H)$}
\label{secregrep}

One of the most studied representations of $G$ is
the natural unitary representations of $G$ in $L^2(G/H)$
where 
$H$ is  a  closed subgroup of $G$.
When $H$ is unimodular, the space $G/H$ is implicitly endowed with 
a $G$-invariant measure and $G$ acts naturally by translation on 
$L^2$-functions. When $H$ is not assumed to be unimodular,
the representation of $G$ in $L^2(G/H)$ might involve an extra factor \eqref{eqnregrec} 
and is nothing but the (unitarily) induced representation
$Ind_H^G(1)$.  

The disintegration of $L^2(G/H)$
is sometimes called the {\textit{Plancherel formula}} or $L^2$-harmonic analysis on $G/H$. 
\begin{itemize}
 \item 
For instance, Harish-Chandra's celebrated Plancherel formula \cite{HaCh76}
deals with  the case where $H=\{ e\}$.
\item
Another case which attracted a lot of interest in the 
late $20^{\rm th}$ century is the case 
where $G/H$ is a symmetric space,
for which the disintegration of $L^2(G/H)$ is proved 
up to the classification of (singular) discrete series representations 
for symmetric spaces \cite{Del98, Osh88}.
\item 
Even when $H$ is not unimodular, 
the regular representation of $G$ in $L^2(G/H)$ is still interesting.
For instance,  when $H$ is a parabolic subgroup of $G$, this representation is known to be
a finite sum of irreducible representations of $G$.
This follows from Bruhat's theory for minimal parabolic and is due to Harish-Chandra in general
(see \cite[Prop. 8.4]{Kn01}).
\item
The decomposition of the tensor product ({\textit{fusion rule}}) is sometimes equivalent to
 the Plancherel formula for $G/H$ where $H$ is not necessarily unimodular
(see Section \ref{sectensor}).
\end{itemize}

We refer to  \cite[Intro.]{BeKoI} for more remarks on the
historical developments of the disintegration of the regular representation 
of $G$ in $L^2(G/H)$.
Getting {\textit {a priori}} information on this disintegration 
was one of the main motivations in our search for such a general criterion.
To the best of our knowledge there does not exist yet any general theorem involving
simultaneously 
all these regular representations in $L^2(G/H)$ with $H$ connected.
Our temperedness criterion below seems to be the first one in that direction.
\vs

In this series of papers, we address the following question: 
\textsl{What kind of unitary representations occur
in the disintegration of $L^2(G/H)$?} More precisely, 
\textsl{when are all of them tempered?}
\vs

As noted in \cite{BeKoI}, this question has not been completely solved even for
 reductive symmetric spaces $G/H$, because the Plancherel formula 
involves a delicate algebraic problem on 
 discrete series representations for (sub)sym\-metric spaces
 with singular infinitesimal characters (see Example~\ref{exSp}). 

We give a geometric necessary and sufficient condition  
on $G/H$   under which all these irreducible unitary representations 
in the Plancherel formula  are tempered, 
or equivalently under which the regular representation of $G$ in $L^2(G/H)$ is tempered.
This criterion was first discovered in our paper \cite{BeKoI}
in the special case where $H$ is a reductive subgroup of $G$.

In the present paper we extend this criterion 
to any  closed subgroup $H$ with finitely many connected components.

Formally the extended criterion is exactly the same as for $H$ reductive.
Here is a short way to state our criterion (see Theorem \ref{thmlghtem}).
\begin{equation}
\label{eqnlghtem}
 L^2(G/H)\; \mbox{\rm is tempered}
\;\;\;\Longleftrightarrow \;\;\;
\rho_{\gs h}(Y)\leq \rho_{\gs g/\gs h}(Y)
\;\;\;\mbox{\rm for all $Y\in \g h$.}
\end{equation}
Here  $\g g$ and $\g h$ are the Lie algebras of $G$ and $H$, 
and, for an $\g h$-module $V$ and $Y\in \g h$, the quantity
$\rho_V(Y)$ is
half the sum of the absolute values of the real part of the eigenvalues of $Y$ in $V$ (see Section \ref{secfunrov}).
We note that our criterion holds beyond  the (real) spherical case (\cite{ko13fm},
cf.~\cite{SaVe17} for  non-archimedean field) 
so that  the disintegration of $L^2(G/H)$ may involve infinite multiplicities.  
We will give an explicit example of calculations of functions $\rho_V$ in 
Corollary \ref{corslpqr}.

\medskip
Our criterion \eqref{eqnlghtem} detects also whether or not 
$L^2(X)$ is tempered for any real algebraic $G$-variety $X$:
$L^2(X)$ is unitarily equivalent to 
the direct integral of the regular representations for generic orbits,
and one just has to check \eqref{eqnlghtem} at almost all orbits.

\subsection{Strategy of proof}
\label{secstrpro}

The proof of \eqref{eqnlghtem} 
relies on the uniform decay of matrix coefficients
as in \cite{BeKoI}, but the main techniques are  different from those  in \cite{BeKoI}.

 To avoid any confusion we will sometimes say {\it $G$-tempered} 
for {\it tempered as a representation of $G$}.

Dealing with non-unimodular subgroups $H$ and dealing with the finitely many components of $H$ will not be a problem 
because one proves in Corollary \ref{corgh1h2}  the equivalence
\begin{eqnarray*}
\label{eqnlghghh}
L^2(G/H)\; \mbox{\rm is tempered}
&\;\;\;\Longleftrightarrow \;\;\;&
L^2(G/[H,H])\; \mbox{\rm is tempered}\\
&\;\;\;\Longleftrightarrow \;\;\;&
L^2(G/H_e)\; \mbox{\rm is tempered}
\; ,
\end{eqnarray*}
where $[H,H]$ is the derived subgroup which is always unimodular 
and where $H_e$ is the identity component of $H$ which is always connected! 
Therefore the temperedness of $L^2(G/H)$ depends only on $\g h$ and 
we can assume that $\g h=[\g h,\g h]$. Then 
the homogeneous space $G/H$ admits a $G$-invariant Radon measure ${\rm vol}$ and, 
according to Corollary \ref{coralpcol}, the  temperedness of $L^2(G/H)$ means that the 
compact subsets $C$ of $G/H$ intersect their translates $gC$ in a set whose  volume is bounded 
by a multiple  of the Harish-Chandra function $\Xi$:
\begin{equation}
\label{eqnvolxig}
{\rm vol}(g\, C\cap C) \leq M_C\,\Xi(g)
\;\;\;\; \mbox{\rm for all $g$ in $G$.}
\end{equation}

To prove the direct implication 
``$L^2(G/H)$ tempered $\Longrightarrow$ $\rho_{\gs h}\leq \rho_{\gs g/\gs h}$'',
we will just estimate in Proposition \ref{prolghtem} this volume \eqref{eqnvolxig} when $C$ is a small neighborhood
of the base point of the space $G/H$.

As in \cite{BeKoI}, the converse implication  
``$\rho_{\gs h}\leq \rho_{\gs g/\gs h}$ $\Longrightarrow$ $L^2(G/H)$ tempered'',
is much harder to prove. 
Using again Corollary \ref{corgh1h2},  we can also assume that both $G$ and $H$ 
are Zariski connected algebraic groups. 
We proceed by induction on the dimension of $G$.
We introduce in Definition-Lemma \ref{defparsub} 
two intermediate subgroups $F$ and $P$:
\begin{equation}
\label{eqnFP}
          H \subset F \subset P \subset G\, .
\end{equation}
The group $P$ is a parabolic subgroup of minimal dimension that contains $H$.
We write $P=LU$ and $H=SV$ where $U$ is the unipotent radical of $P$,
$L$ is a maximal reductive subgroup of $P$,
$V\subset U$ is the unipotent radical of $H$,
and $S\subset L$ a maximal semisimple subgroup of $H$.
The group $F$ is given by $F:=SU$.

When $P$ is equal to $G$, the group $H$ is semisimple and we apply the main result 
of \cite{BeKoI}. We now assume that $P$ is a proper parabolic subgroup of $G$. 
Let $Z$ be the homogeneous space $Z=F/H\simeq U/V$ endowed with the natural $F$-action.
We denote\\ 
- by $\tau$ the regular representation of $F$ in $L^2(F/H)$,\\
- by  $\pi$ the regular representation of $P$ in $L^2(P/H)$,\\ 
- by $\Pi$ the regular representation of $G$ in $L^2(G/H)$.

Let $Z_0$ be the  same space $Z_0= U/V$ but endowed with another $F$-action, 
where $U$ acts trivially
and where $S$ acts by conjugation.

We denote\\
- by $\tau_0$ the regular representation of $F$ in $L^2(Z_0)$,\\
- by  $\pi_0$ the regular representation of $P$ in $L^2(P\times_FZ_0)$,\\ 
- by $\Pi_0$ the regular representation of $G$ in $L^2(G\times_FZ_0)$.

Note that both the $G$-manifolds $X:=G/H \simeq G \times_F Z$ 
and $X_0 := G \times_F Z_0$ have 
a $G$-equivariant fiber bundle structure 
over the same space $G/F$ with fiber $Z \simeq Z_0$, 
but that the unitary representations of $G$ 
in $L^2(X)$ and in $L^2(X_0)$ are different. 
We want to study the representation of $G$ in  $L^2(X)$, 
whereas the induction hypothesis will give us information on  $L^2(X_0)$.
This is why we will need to  compare in \eqref{eqnvolvolf} below
the volumes in the fibers $Z$ and $Z_0$.

More precisely, the induction hypothesis combined with a reformulation
of our criterion in Proposition \ref{proequthm}
and a simple computation in Lemma \ref{lemparsub}  tell us that the representation $\pi$ is $L$-tempered.
If we knew that $\pi$ were a $P$-tempered representation 
it would be easy to conclude, using Lemma \ref{lemindtem}, that the representation 
$\Pi=\Ind_P^G\pi$ is $G$-tempered. 
However, what we know is the temperedness of $\pi_0$ and $\Ind_P^G \pi_0$, and
 Corollary \ref{coralpcol} gives us,
for any  
compact subset $C_0$ of $G\times_FZ_0$, 
a bound:
\begin{equation}
\label{eqnvolxi0}
{\rm vol}(g\, C_0\cap C_0) \leq M_{C_0}\,\Xi(g) 
\;\;\;\; \mbox{\rm for all $g$ in $G$.}
\end{equation}
In order to deduce \eqref{eqnvolxig} from \eqref{eqnvolxi0},
we first focus on the representations $\tau$ in $L^2(Z)$ and $\tau_0$ in $L^2(Z_0)$. 
We prove in Proposition \ref{prodom},
for every compact subset $D$ of the fiber $Z= F/H$, a uniform
 estimate of ${\rm vol}(f D \cap D)$ 
with respect to translates of the element $f\in F$ by elements of the unipotent subgroup $U$.
Namely,  there exists a compact subset 
$D_0\subset Z_0$ such that 
\begin{equation}
\label{eqnvolvolf}
{\rm vol}(fD\cap D) \leq {\rm vol}(fD_0\cap D_0)
\;\;\;\; \mbox{\rm for all $f$ in $F$.}
\end{equation}
Since $\Pi=\Ind_F^G \tau$ and $\Pi_0=\Ind_F^G \tau_0$, 
we deduce  from \eqref{eqnvolvolf} in Proposition \ref{provolgcc} that 
for every compact subset $C\subset G/H \simeq G\times_F Z$, there exists a compact subset 
$C_0\subset G\times_FZ_0$ such that 
\begin{equation}
\label{eqnvolvolg}
{\rm vol}(g\, C\cap C) \leq {\rm vol}(g\, C_0\cap C_0)
\;\;\;\; \mbox{\rm for all $g$ in $G$.}
\end{equation}
And \eqref{eqnvolxig} follows 
from \eqref{eqnvolxi0} and \eqref{eqnvolvolg}. This ends the
sketch of the  proof 
of the criterion \eqref{eqnlghtem}. 
The details are explained below. 

\subsection{Organization}
\label{secorg}

Here is the organization of the paper.

In Section \ref{secdefres} we recall the basic definition
and state precisely our criterion.

In Sections \ref{secprepro} we collect the parts 
of the proof which do not involve the intermediate parabolic 
subgroup $P$. 
It includes the proof for the necessity of the inequality $\rho_{\gs h}\leq \rho_{\gs g/\gs h}$,
 and a formulation of the
 temperedness criterion for $\Ind_H^G(L^2(V))$ (Theorem~\ref{thmlgvtem}).

In Section \ref{secparsub} we introduce the intermediate subgroups $F$ and $P$ in \eqref{eqnFP},
and detail how they work to conclude the proof for
the hard part, {\textit{i.e.}}, the proof that  the inequality $\rho_{\gs h}\leq \rho_{\gs g/\gs h}$
is sufficient.

In Section \ref{secexalgh} we give a few examples which illustrate the efficiency 
of our criterion. 
\vs

\subsection{Dedication}
\label{secded}

The notion of tempered representations plays
an important role in quite a few papers of G.Margulis.
As  we have seen the tempered representations is
one of the main tools
in order to obtain uniform estimates for coefficients
of unitary representations.
G. Margulis found many very different and very ingenious applications of
these uniform estimates all along his mathematical career. 

For instance to the construction of expanders 
in 
\cite{Ma73}, to the non-existence of compact quotient of homogeneous spaces in \cite{Ma97},
to  a pointwise ergodic theorem for probability measure preserving actions of semisimple Lie groups in \cite{MaNeSt}, to 
a local rigidity phenomenon for actions of higher rank lattices in \cite{FiMa},
to an effective rate of equidistribution of closed orbits of semisimple Lie groups in finite volume spaces in \cite{EiMaVe},   to a uniform estimate on the smallest integral base change
between two equivalent non-singular integral quadratic forms in  \cite{LiMa16}, \ldots

One  inspiration for  Theorem \ref{thmlghtem}  
came from  Lemma 6.5.4 of
\cite{EiMaVe} which says
that for a connected semisimple Lie group $G$ with no compact factor
and a proper closed connected subgroup $H$,
the representation
of $G$ in $L^2(G/H)$ always has a spectral gap.
\vs

We are proud to dedicate this paper to G. Margulis.

\section{Definition and main result}
\label{secdefres}

We collect in this chapter a few well-known facts  on tempered representations, on almost $L^2$ represen\-ta\-tions,
and on uniform decay of matrix coefficients.

\subsection{Regular and induced representations}
\label{secregind}
\bq
We first recall the construction of 
the regular representations and the 
induced representations. 
\eq

\subsubsection{Regular representations}
Let $G$ be a separable locally compact group,
$X$ be a  separable locally compact space endowed with a continuous action of $G$
and let $\nu_X$ be a Radon measure on $X$.

When the $G$-action preserves the measure $\nu_X$, 
one has a natural unitary representation $\la_X$ of $G$ in the 
Hilbert space $L^2(X):=L^2(X,\nu_X)$  called the 
{\it regular representation} and given by
\begin{equation*}
\label{eqnregrep}
(\la_X(g)\varphi)(x)=\varphi(g^{-1}x)
\;\;\;\mbox{\rm for $g$ in $G$, $\varphi$ in $L^2(X)$ and $x$ in $X$.}
\end{equation*} 

When the class of the measure $\nu_X$ is $G$-invariant, one still has a  
natural unitary representation $\pi$ of $G$ in the 
Hilbert space $L^2(X):=L^2(X,\nu_X)$ also called the 
{\it regular representation}. The formula will involve the 
Radon--Nikodym cocycle $c(g,x)$ which is defined,
for all $g$ in $G$ and $\nu_X$-almost all $x$ in $X$ 
by the equality
\begin{equation}
\label{eqnradnik}
g_*\nu_X=c(g^{-1},x)\,\nu_X\, .
\end{equation} 

The regular representation of $G$ in $L^2(X)$ is then given by  
\begin{equation}
\label{eqnregrec}
(\la_X(g)\varphi)(x)=c(g^{-1},x)^{1/2}\,\varphi(g^{-1}x)
\;\;\;\mbox{\rm for $g$ in $G$, $\varphi$ in $L^2(X)$, $x$ in $X$.}
\end{equation} 

\subsubsection{Induced representations}
Assume now that $X$ is a homogeneous space $G/H$ 
where $H$ is a closed subgroup of $G$.
One can choose a $G$-invariant 
Radon measure on $G/H$
if and only if the modular function of $G$ coincides on $H$ with that of $H$,
\begin{equation*}
\label{eqndegdeh}
\De_G(h) =\De_H(h)
\;\;\;\mbox{\rm for all $h$ in $H$.}
\end{equation*}
In general there  always
exists  a measure $\nu$ on $G/H$ 
whose class is $G$-invariant, and the regular representation 
of $G$ in $L^2(G/H)$ is the induced representation of the trivial representation of $H$
\begin{equation*}
\label{eqnregind}
\la_{G/H}=\Ind_H^G({\bf{1}}).
\end{equation*}
More generally, for any unitary representation $\pi$ of $H$,
one defines the (unitarily) induced representation 
$\Pi:=\Ind_H^G(\pi)$ in the following way.
The projection 
$$
G\longrightarrow G/H
$$ 
is a principal bundle with structure group $H$. 
We fix
a $G$-equivariant Borel measurable trivialization of this principal bundle 
\begin{equation}
\label{eqngsighh}
G\simeq G/H\times H
\end{equation}
which sends relatively compact subsets to relatively compact subsets.
The action of $G$ by left multiplication through this trivialization can be read as 
\begin{equation*}
\label{eqngxhgxs}
g\,(x,h)=(gx,\si(g,x)h)
\;\;\;\;\mbox{\rm 
for all $g\in G$, $x\in G/H$ and $h\in H$,}
\end{equation*}
where $\si\colon G\times G/H\rightarrow H$ 
is a Borel measurable cocycle.

The space of the representation $\Pi$ is the space
$\mc H_{\Pi}:=L^2(G/H;\mc H_{\pi})$ of $\mc H_\pi$-valued $L^2$-functions on $G/H$ and the action of $G$ is given,
for $g$ in $G$, $\psi$ in $\mc H_{\Pi}$, $x$ in $G/H$, by
\begin{equation*}
\label{eqnindrep}
(\Pi(g)\psi)(x)=c(g^{-1},x)^{1/2}\,\pi(\si(g,g^{-1}x))\psi(g^{-1}x),
\end{equation*} 
where $c$ is again the Radon--Nikodym cocycle \eqref{eqnradnik}. 

\subsubsection{Induced actions}
When the closed subgroup $H$ of  $G$ is acting continuously 
on a locally compact space $Z$ one can define the induced action
of $G$ on the fibered space 
$$
G\times_H Z:=  (G\times Z)/H
$$
where the quotient is taken for the right $H$-action 
$(g,z)h=(gh,h^{-1}z)$ and where the $G$-action is given by
$g_0\,(g,z)=(g_0g,z)$, for all $g_0$, $g$ in $G$, $z$ in $Z$ and $h$ in $H$.
Using \eqref{eqngsighh}, one gets a $G$-equivariant Borel measurable trivialization of this fibered space
$$
G\times_H Z\simeq G/H\times Z.
$$ 
Through this identification, the $G$-action
is given by 
$$
g(x,z)=(gx,\si(g,x)z)\, 
\;\;\;\mbox{\rm 
for all $g\in G$, $x\in G/H$ and $z\in Z$.}
$$
When the $H$-action preserves the class of a measure $\nu_Z$ on $Z$, 
the $G$-action preserves the class of the measure $\nu_X:=\nu\otimes\nu_Z$.
In this case the regular representation of $G$ in $L^2(G\times_HZ)$ is unitarily equivalent to the 
induced representation of the regular representation of $H$ in $Z$:
\begin{equation}
\label{eqnrexind} 
L^2(G\times_H Z)\simeq \Ind_H^G(L^2(Z))
\;\;\;\mbox{as unitary representations of $G$.}
\end{equation} 

\subsection{Decay of matrix coefficients}
\label{sectemrep2}

\bq
We now recall the control of the matrix coefficients of tempered 
representations of a semisimple Lie group.
\eq

\subsubsection{Tempered representations}
Let $G$ be a locally compact group and $\pi$ be
a unitary representation  of $G$ in a Hilbert space ${\mathcal H}_\pi $.
All  representations $\pi$ of $G$ will be assumed to be continuous 
{\it{i.e.}} the map $G\ra\mc H_\pi ,\, g\mapsto \pi(g)v$ is continuous
for all $v$ in $\mc H_\pi$.
The notion of tempered representation is due to Harish-Chandra.

\begin{Def}
\label{deftem}
The unitary representation $\pi$ is said to be tempered
or $G$-tempered
if $\pi$ is weakly contained 
in the regular representation $\lambda_G$ of $G$ in $L^2(G)$ {\it{i.e.}} if every matrix coefficient of $\pi$ 
is a uniform limit on every compact subset of $G$  of a sequence of 
sums of matrix coefficients of $\lambda_G$. 
\end{Def}
We refer to \cite[Appendix F]{BeHaVa} for more details on weak containments.

\begin{Rem}
\label{remtem}
The notion of temperedness is stable by passage to a finite index subgroup 
$G'$ of $ G$, {\it{i.e.}}
a unitary representation $\pi$ of  $G$ is tempered if and only if $\pi$
is tempered as a representation of $G'$.
\end{Rem}

This notion is also preserved by induction. 

\begin{Lem}
\label{lemindtem}
Let $G$ be a locally compact group, $H$ be a closed subgroup of $G$
and $\pi$ be a unitary representation of $H$.
If $\pi$ is $H$-tempered then the induced representation
$\Ind_H^G(\pi)$ is $G$-tempered.
\end{Lem}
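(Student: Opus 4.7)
The plan is to reduce the statement to two classical facts about weak containment: (a) induction preserves weak containment (Fell's continuity of induction), and (b) induction sends the regular representation of $H$ to the regular representation of $G$, namely $\Ind_H^G(\la_H)\simeq \la_G$.

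First I would rewrite the hypothesis in the language of weak containment: by Definition \ref{deftem}, saying that $\pi$ is $H$-tempered means precisely that $\pi$ is weakly contained in $\la_H$. Write this as $\pi\prec \la_H$. The goal is then to show $\Ind_H^G(\pi)\prec \la_G$, which by Definition \ref{deftem} is exactly the $G$-temperedness of $\Ind_H^G(\pi)$.

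Next I would invoke Fell's theorem on the continuity of induction with respect to weak containment, which states that if $\pi_1\prec \pi_2$ are unitary representations of $H$, then $\Ind_H^G(\pi_1)\prec \Ind_H^G(\pi_2)$. Applying this to $\pi\prec \la_H$ gives
\[
\Ind_H^G(\pi)\;\prec\; \Ind_H^G(\la_H).
\]
It then remains to identify the right-hand side with $\la_G$. This identification follows from the unitary equivalence \eqref{eqnrexind} applied to $Z=H$ endowed with the left $H$-action by translation: we have $G\times_H H \simeq G$ as $G$-spaces (the map $[g,h]\mapsto gh$ is a $G$-equivariant homeomorphism), and under this identification the measures match up to the Radon--Nikodym cocycle already built into the construction of $\Ind_H^G$, so
\[
\Ind_H^G(\la_H)\;\simeq\; L^2(G\times_H H)\;\simeq\; L^2(G)\;=\;\la_G.
\]
Combining the two displayed equations yields $\Ind_H^G(\pi)\prec \la_G$, which is the conclusion.

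The only non-trivial ingredient is Fell's continuity of induction; this is where care is needed because $H$ and $G$ are not assumed unimodular, so one must carry the modular cocycle through the argument when realizing matrix coefficients of $\Ind_H^G(\pi)$ as uniform limits, on compacta of $G$, of finite sums of matrix coefficients of $\Ind_H^G(\la_H)$. The standard way to do this is to take a vector of the form $\psi(x)=f(x)v$ with $f\in C_c(G/H)$ and $v\in\mc H_\pi$, express the matrix coefficient
$\langle \Ind_H^G(\pi)(g)\psi,\psi\rangle$ as an integral over $G/H$ of a matrix coefficient of $\pi$ twisted by $f$ and by the Radon--Nikodym cocycle, and use uniform convergence of the $\pi$-matrix coefficients on the compact set $\sigma(g,\cdot)$-image of the support of $f$ (uniform in $g$ over a fixed compact subset of $G$) to pass the approximation through the integral. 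This is the main, but entirely routine, technical point.
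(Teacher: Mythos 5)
Your proposal is correct and follows the same route as the paper's own (very short) proof: both argue that $\pi\prec\la_H$ implies $\Ind_H^G(\pi)\prec\Ind_H^G(\la_H)\simeq\la_G$ via continuity of induction under weak containment. The extra care you take with the modular cocycle in the non-unimodular case is a sound elaboration of a step the paper leaves implicit.
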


\begin{proof} 
Since the $H$-representation 
$\pi$ is weakly contained in the regular representation $\la_H$
of $H$,
the $G$-representation $\Ind_H^G(\pi)$ 
is weakly contained in the regular representation $\la_G=\Ind_H^G(\la_H)$, and 
hence  is $G$-tempered.  
\end{proof}

\begin{Rem}
\label{remtemame}
$1)$ When $G$ is amenable, according to the Hulanicki--Reiter theorem 
in \cite[Th.~G.3.2]{BeHaVa}, every unitary representation of $G$ is tempered. 

\noindent 
$2)$ When $G$ is a product of two closed subgroups $G=SZ$ with $Z$ central, {\it a unitary representation $\pi$ of $G$ is 
$G$-tempered if and only if it is $S$-tempered.}
Indeed the regular representation of $G$ in $L^2(G)$ is clearly 
$S$-tempered. Conversely, we want to prove that any unitary representation $\pi$ of $G$ which is $S$-tempered 
is also $G$-tempered.
We can assume that $\pi$ is $G$-irreducible. The action of $Z$ in this representation is given by a unitary character $\chi$ and $\pi$ 
is weakly contained in the representation $\Ind_Z^G\chi$. 
Since $\chi$ is $Z$-tempered, this representation is $G$-tempered. 
\end{Rem}

\subsubsection{Matrix coefficients}
Let  now $G$ be a semisimple Lie group (always implicitly assumed to be real Lie groups with finitely many connected components and whose identity component has finite center). 

\begin{Def}\label{defwl2}
A unitary representation $\pi$ of $G$ is said to be almost $L^2$
if there exists  a dense subset $\mc D\subset{\mathcal H}_\pi $ 
for which the matrix coefficients
 $ g\mapsto \langle  \pi(g)v_1,v_2\rangle $ are in
$L^{2+\varepsilon}(G)$ for all $\varepsilon>0$ and all $v_1$, $v_2$ in $\mc D$. 
\end{Def}

We fix a maximal compact subgroup $K$ of $G$.
Let $\Xi$ be the Harish-Chandra spherical function on $G$ 
(see \cite{CHH}).
By definition, $\Xi$ is  the matrix coefficient of a normalized $K$-invariant vector $v_0$
of the spherical unitary principal representation 
$\pi_0={\Ind}_{P_{{\operatorname{min}}}}^G({\bf 1}_{P_{{\operatorname{min}}}})$ 
where $P_{\operatorname{min}}$ is a minimal parabolic subgroup of $G$.
That is 
\begin{equation}
\label{eqnxighar}
\Xi(g)=\langle\pi_0(g)v_0,v_0\rangle
\;\; 
\mbox{\rm for all $g$ in $G$}. 
\end{equation}
Since $P_{\operatorname{min}}$ is amenable, the representation $\pi_0$ is $G$-tempered. Moreover, the function $\Xi$ belongs to $L^{2+\varepsilon}(G)$ for all $\varepsilon>0$ 
(see \cite[Prop.~7.15]{Kn01}).
We will need the following  much more precise version of
this fact.

\begin{Prop} 
[Cowling, Haagerup and Howe {\cite{CHH}}]
\label{protemal2}
Let $G$ be a connected semisimple Lie 
group with finite center and $\pi$
be a unitary representation  of $G$. The following are equivalent:\\
$(i)$ the representation $\pi$ is tempered,\\
$(ii)$ the representation $\pi$ is almost $L^2$,\\
$(iii)$ for every $K$-finite vectors $v$, $w$ in ${\mathcal H}_\pi $, for every $g$ in $G$,
one has 
$$
|\langle  \pi(g)v,w\rangle |\leq \Xi(g) 
\| v\| \| w\| (\dim \langle  K v\rangle )^\frac12 (\dim \langle  K w\rangle )^\frac12.
$$
\end{Prop}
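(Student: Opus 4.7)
I propose to prove the three conditions equivalent via the cycle $(\mathrm{iii})\Rightarrow(\mathrm{ii})\Rightarrow(\mathrm{i})\Rightarrow(\mathrm{iii})$.

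The implication $(\mathrm{iii})\Rightarrow(\mathrm{ii})$ is immediate. I take $\mathcal{D}\subset\mathcal{H}_\pi$ to be the subspace of $K$-finite vectors, which is dense by Peter--Weyl for the compact group $K$. For any $v,w\in\mathcal{D}$ the dimensions $\dim\langle Kv\rangle$ and $\dim\langle Kw\rangle$ are finite, so the bound in $(\mathrm{iii})$ combined with $\Xi\in L^{2+\varepsilon}(G)$ places the matrix coefficient $g\mapsto\langle\pi(g)v,w\rangle$ in $L^{2+\varepsilon}(G)$ for every $\varepsilon>0$.

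For $(\mathrm{ii})\Rightarrow(\mathrm{i})$ I use the standard positive-definite-function technique. A diagonal matrix coefficient $\phi(g)=\langle\pi(g)v,v\rangle$, for $v\in\mathcal{D}$, is continuous, positive definite, bounded by $\phi(e)$, and lies in $L^{2+\varepsilon}(G)$ for every $\varepsilon>0$. Hence a suitable power $\phi^N$ lies in $L^1(G)\cap L^\infty(G)$. Since $\phi^N$ is the diagonal matrix coefficient of $\pi^{\otimes N}$ on the vector $v^{\otimes N}$, and any positive-definite $L^1$-function defines a cyclic subrepresentation of $\lambda_G$ (a standard fact), the tensor power $\pi^{\otimes N}$ is weakly contained in $\lambda_G$. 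Fell's absorption principle then transfers this conclusion back to $\pi$, and letting $v$ range over the dense subspace $\mathcal{D}$ yields weak containment of $\pi$ in $\lambda_G$.

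The main work is the implication $(\mathrm{i})\Rightarrow(\mathrm{iii})$, which is the Cowling--Haagerup--Howe theorem. The strategy is:
\begin{enumerate}
\item[(a)] Reduce to the case that $v$ and $w$ each generate an irreducible $K$-sub\-repre\-sen\-tation $V_1,V_2\subset\mathcal{H}_\pi$ of dimensions $d_1,d_2$, with orthonormal bases $\{v_i\}$, $\{w_j\}$.
\item[(b)] Introduce the compression $T(g)=P_{V_2}\pi(g)|_{V_1}\colon V_1\to V_2$, so that $|\langle\pi(g)v,w\rangle|\leq\|T(g)\|\,\|v\|\,\|w\|$, reducing the task to bounding $\|T(g)\|$.
\item[(c)] Via Fell's absorption principle, the representation $\pi\otimes\bar\pi$ is weakly contained in $\lambda_G$. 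Recognize the function $g\mapsto\sum_{i,j}|\langle\pi(g)v_i,w_j\rangle|^2$ as a bi-$K$-invariant matrix coefficient of $\pi\otimes\bar\pi$ on the $K$-invariant vectors $\sum_i v_i\otimes\bar v_i$ and $\sum_j w_j\otimes\bar w_j$ (of norms $\sqrt{d_1}$ and $\sqrt{d_2}$). Use the Plancherel-type decomposition of bi-$K$-invariant tempered matrix coefficients over the spherical unitary dual, together with the extremality of $\Xi$ among tempered spherical functions, to extract the quantitative bound $(\mathrm{iii})$ after accounting for the $K$-dimension normalizations.
\end{enumerate}

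The main obstacle is step (c): pinning down the exact dependence on $\Xi$ and on the $K$-dimensions $d_1,d_2$. A naive Cauchy--Schwarz estimate on the direct-integral decomposition of the bi-$K$-invariant matrix coefficient gives only a weaker bound with a fractional power of $\Xi$; the sharp bound $\Xi(g)\sqrt{d_1 d_2}\,\|v\|\,\|w\|$ requires the refined spectral analysis and auxiliary tensor-product manipulations that form the technical core of \cite{CHH}, which is invoked directly in the statement.
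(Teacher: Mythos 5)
First, a point of comparison: the paper does not prove this proposition at all --- it is quoted from Cowling--Haagerup--Howe and the ``proof'' consists of the citation to \cite[Thms.~1, 2 and Cor.]{CHH}. So your sketch is of the external argument, and for the hard quantitative implication $(i)\Rightarrow(iii)$ you do exactly what the paper does, namely defer to \cite{CHH}; your observation that the naive Cauchy--Schwarz estimate on $\sum_{i,j}|\langle\pi(g)v_i,w_j\rangle|^2$ only yields $\Xi(g)^{1/2}(d_1d_2)^{1/4}$ is accurate, and your implication $(iii)\Rightarrow(ii)$ is fine.

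There is, however, a genuine gap in your argument for $(ii)\Rightarrow(i)$. Fell's absorption principle says $\lambda_G\otimes\sigma\simeq\lambda_G^{\oplus\dim\sigma}$, so it transports weak containment in the direction $\pi\prec\lambda_G\Rightarrow\pi\otimes\sigma\prec\lambda_G$; it does not ``transfer back'' from $\pi^{\otimes N}\prec\lambda_G$ to $\pi\prec\lambda_G$, and that reverse implication is false in general. For instance, take $\pi$ a complementary series representation of $SL(2,\mathbb{R})$ whose spherical matrix coefficient decays like $\Xi^{1/2}$: then $\phi^5\in L^1\cap L^\infty$, so the cyclic piece of $\pi^{\otimes 5}$ generated by $v^{\otimes 5}$ embeds in $\lambda_G$ by Godement's theorem exactly as in your argument, yet $\pi$ is not tempered (its coefficients are only in $L^{4+\varepsilon}$). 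So the hypothesis that $\phi$ itself lies in $L^{2+\varepsilon}$ for \emph{every} $\varepsilon>0$ must be used again in the final step, and no soft tensor-power principle can do this. The actual mechanism in \cite[Thm.~1]{CHH} is a $C^*$-norm estimate: for $f\in C_c(G)$ one bounds $\langle\pi(f^{*n})\xi,\xi\rangle=\int f^{*n}\phi$ by H\"older against $\phi\in L^{2+\varepsilon}$, controls $\|f^{*n}\|_{q}$ ($q<2$) by interpolating between $\|f^{*n}\|_1\le\|f\|_1^n$ and $\|f^{*n}\|_2\le\|\lambda_G(f)\|^{n-1}\|f\|_2$, and uses the spectral radius formula to get $\|\pi(f)\|\le\|f\|_1^{\theta}\|\lambda_G(f)\|^{1-\theta}$ with $\theta\to0$ as $\varepsilon\to0$; letting $\theta\to 0$ gives $\|\pi(f)\|\le\|\lambda_G(f)\|$, i.e.\ temperedness. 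You should replace the Fell-absorption step by this (or an equivalent) argument.
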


See \cite[Thms.~1, 2 and Cor.]{CHH}.
See also  \cite{HoTa}, \cite{Nev98} and \cite{Oh1} for other applications
of Proposition \ref{protemal2}.

For  regular representations this proposition becomes:

\begin{Cor}\label{coralpcol}
Let $G$ be a connected semisimple   Lie 
group with finite center
and $X$ a locally compact space endowed with a continuous action of $G$ 
preserving a Radon measure ${\rm vol}$.
The regular representation of $G$ in $L^2(X)$  is tempered if and only if,
for any  compact subset 
$C$ of $X$, the function $g\mapsto {\rm vol} (g\, C\cap C)$ 
belongs to $L^{2+\eps}(G)$
for all $\eps>0$.

In this case, when $C$ is $K$-invariant,
one has
\begin{equation}
\label{eqnvolgcc}
{\rm vol} (g\, C\cap C)\leq {\rm vol}( C)\;\Xi(g) 
\;\;\;\mbox{\rm for all $g$ in $G$}.
\end{equation}
\end{Cor}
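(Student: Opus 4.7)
The plan is to identify, for any compact $C\subset X$, the function $g\mapsto {\rm vol}(gC\cap C)$ with the diagonal matrix coefficient of the indicator $\mathbf{1}_C\in L^2(X)$, namely
\[
\langle \lambda_X(g)\mathbf{1}_C,\mathbf{1}_C\rangle \;=\; \int_X \mathbf{1}_C(g^{-1}x)\mathbf{1}_C(x)\,d{\rm vol}(x)\;=\;{\rm vol}(gC\cap C).
\]
Once this is observed, the corollary becomes a direct translation of Proposition \ref{protemal2} applied to the regular representation $\lambda_X$, using the standard fact that compactly supported bounded measurable functions form a dense subspace of $L^2(X)$.

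First I would handle the quantitative inequality \eqref{eqnvolgcc}. If $C$ is $K$-invariant, then $\mathbf{1}_C$ is a $K$-invariant vector and hence spans a one-dimensional $K$-subrepresentation of $L^2(X)$, so $\dim\langle K\mathbf{1}_C\rangle=1$. Applying (iii) of Proposition \ref{protemal2} with $v=w=\mathbf{1}_C$ and noting $\|\mathbf{1}_C\|^2={\rm vol}(C)$ yields immediately
\[
{\rm vol}(gC\cap C)\;\le\;\Xi(g)\,\|\mathbf{1}_C\|^2\;=\;{\rm vol}(C)\,\Xi(g).
\]

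For the forward direction of the equivalence, suppose $\lambda_X$ is $G$-tempered and let $C\subset X$ be any (not necessarily $K$-invariant) compact subset. I would thicken $C$ to $C':=KC$, which is still compact (as $K$ is compact) and is $K$-invariant. Since $gC\cap C\subset gC'\cap C'$, the inequality just proved gives ${\rm vol}(gC\cap C)\le {\rm vol}(C')\,\Xi(g)$, and the conclusion follows from the fact that $\Xi\in L^{2+\eps}(G)$ for every $\eps>0$.

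For the converse direction, assume the integrability condition on every ${\rm vol}(gC\cap C)$. I would take $\mathcal{D}\subset L^2(X)$ to be the dense subspace of bounded measurable functions with compact support. For $\varphi_i\in\mathcal{D}$ with $|\varphi_i|\le M_i\mathbf{1}_{C_i}$ and $C:=C_1\cup C_2$, the Cauchy--Schwarz-type estimate
\[
|\langle \lambda_X(g)\varphi_1,\varphi_2\rangle|\;\le\; M_1M_2\int_X \mathbf{1}_{C_1}(g^{-1}x)\mathbf{1}_{C_2}(x)\,d{\rm vol}(x)\;\le\; M_1M_2\,{\rm vol}(gC\cap C)
\]
places all these matrix coefficients in $L^{2+\eps}(G)$ by hypothesis. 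Hence $\lambda_X$ is almost $L^2$ in the sense of Definition \ref{defwl2}, and so is $G$-tempered by the equivalence (i)$\Leftrightarrow$(ii) of Proposition \ref{protemal2}. The main conceptual point — the only real subtlety — is the $K$-invariant enlargement $C':=KC$ in the forward direction, which is exactly what makes the Cowling--Haagerup--Howe bound applicable and converts the qualitative temperedness into the sharp quantitative bound \eqref{eqnvolgcc}.
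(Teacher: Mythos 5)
Your proposal is correct and follows essentially the same route as the paper: identify ${\rm vol}(gC\cap C)$ with the diagonal matrix coefficient of $\mathbf{1}_C$, enlarge $C$ to the $K$-invariant compact $KC$ so that the Cowling--Haagerup--Howe bound of Proposition \ref{protemal2}(iii) applies with $\dim\langle K\mathbf{1}_C\rangle=1$, and use the density of bounded compactly supported functions together with the almost-$L^2$ characterization for the converse. The paper's proof is just a terser version of exactly these observations.
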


Recall that the notation  $g\, C$ denotes the set  $g\, C: =\{gx\in X: x\in C\}$.

\begin{proof}
Note  that  a compact subset $C$ of $X$ is always included
in a $K$-invariant compact subset $C_0$, that
the function ${\bf 1}_{C_0}$ is a $K$-invariant vector in $L^2(X)$ and that 
$$
\langle \la_X(g){\bf 1}_{C_{_0}},{\bf 1}_{C_{_0}}\rangle =
{\rm vol} (g\, C_0\cap C_0)
\;\;\;\mbox{\rm for all $g$ in $G$}.
$$
Note also that  the 
functions  ${\bf 1}_{C}$
span a  dense subspace in $L^2(X)$.
\end{proof}

\subsection{The function $\rho_V$}
\label{secfunrov}

\bq
We now define the functions $\rho_\gs h$ and $\rho_{\gs g/\gs h}$ 
occuring in the temperedness criterion, explain how to compute them  
and emphasize
their geometric meaning.
\eq

When $H$ is a  Lie group we denote by the corresponding gothic letter 
$\g h$ the Lie algebra of $H$. Let $V$ be a real 
finite-dimensional representation of $H$.
For an element $Y$ in $\g h$,
we consider the eigenvalues of $Y$ in $V$
(more precisely in the complexification $V_{\m C}$) and
we denote by $V_+$, $V_0$ and $V_-$ 
the largest vector subspaces of $V$ on which 
the real part of all the eigenvalues of  $Y$
are respectively positive, zero and negative.
One has the decomposition 
$V=V_+\oplus V_0\oplus V_-$.
We define the non-negative functions  
$\rho^+_V$ and $\rho_V$ on $\g h$ by
\begin{eqnarray*}
\label{eqnrhopmv}
\rho^+_V(Y)
&: =& {\rm {\operatorname{Tr}}}(Y|_{V_+}),\\
\rho_V(Y)
&: =& \tfrac12\,\rho^+_V(Y)+\tfrac12\,\rho^+_V(-Y),
\end{eqnarray*}
where ${\rm {\operatorname{Tr}}} $ denotes the trace of a matrix. Note that one has the equality
$
{\rm {\operatorname{Tr}}}(Y|_{V_-})=-\rho^+_V(-Y).
$

By definition, one always has the equality
$\rho_V(-Y) = \rho_V(Y).$ 
Moreover, when the action of $H$ on $V$ is volume preserving
one has the equality
\begin{equation*}
\label{eqnrhovpy}
\rho_V(Y) = \rho^+_V(Y).
\end{equation*}
The function called $\rho_V$  in \cite[Sec. 3.1]{BeKoI} is what we call now $\rho_V^+$. It coincides with our
$\rho_V$  since in \cite{BeKoI} we only need to consider volume preserving actions.

Since this function $\rho_V \colon \g h \to \mathbb{R}_{\ge0}$ 
plays a crucial role in our criterion,
we begin by a few trivial but useful comments, which make  it easy to compute when dealing with examples.
To simplify these comments, we assume that $H$ is an algebraic subgroup of ${\rm GL}(V)$.
Let $\g a=\g a_{\g h}$ be a maximal split abelian Lie subalgebra of $\g h$ 
{\it{i.e.}} the Lie subalgebra of a 
maximal split torus $A$ of $H$.
Any element $Y$ in $\g h$ admits a  unique  Jordan decomposition
$Y=Y_e+Y_h+Y_n$ as a sum of three commuting elements of $\g h$
where $Y_e$ is a semisimple matrix with imaginary eigenvalues,
$Y_h$ is a semisimple matrix with real eigenvalues and 
$Y_n$ is a nilpotent matrix
(see for instance \cite[2.1]{Kos73}). Moreover there exists an element 
$\la_Y$ in $\g a$ which is $H$ conjugate to $Y_h$.
Then one has the equality
\begin{equation*}
\label{eqnrhovly}
\rho_V(Y) = \rho_V(\la_Y)
\;\;\mbox{for all $Y$ in $\g h$.} 
\end{equation*}
This equality tells us that the function $\rho_V$ is completely determined by its restriction to $\g a$.

This function $\rho_V\colon \g a \to \mathbb{R}_{\geq 0}$ is 
continuous and is piecewise linear
{\it{i.e.}} there exist finitely many convex polyhedral cones 
which cover $\g a$ and on which $\rho_V$ is linear.
Indeed, let $P_V$ be the set of weights of $\g a$ in $V$ 
and, for all $\al$ in $P_V$, let $m_\al:=\dim V_{\al}$ 
be the dimension of the corresponding 
weight space. Then one has the equality
\begin{equation}\label{eqnrhovys}
\rho_V(Y) = \tfrac12\sum_{\al\in P_V}m_\al|\al(Y)|
\;\;\;\mbox{for all $Y$ in $\g a$.} 
\end{equation}

For example, when $\g h$ is semisimple and $V = \g h$
 via the adjoint action,
our function $\rho_{\gs h}$ is equal on each  positive Weyl chamber $\g a_+$
of $\g a$ to the sum of the corresponding positive roots {\it{i.e.}} to twice the usual ``$\rho$'' 
linear form.
For other representations $V$, the maximal convex polyhedral cones on which 
$\rho_V$ is linear are most often much smaller than the Weyl chambers. 
Explicit computations of the functions $\rho_V$ will be given in Section \ref{secexalgh}.  
\vs

The geometric meaning of this function $\rho_V$ is given by the following elementary Lemma
as in \cite[Prop. 3.6]{BeKoI}.

\begin{Lem}
\label{lemrovvol}
Let $V=\m R^d$. Let 
$\g a$ be an abelian split Lie subalgebra of ${\rm End}(V)$ and $C$ be a compact neighborhood of $0$ in $V$.
Then there exist constants $m_{_C } >0$ , $M_{_C }  > 0$ such that
\[
m_{_C } e^{-\rho_V(Y)}
\le e^{-\Tr(Y)/2}\,{\rm vol}(e^Y C \cap C)
\le M_{_C }  e^{-\rho_V(Y)}
\;\;\mbox{\rm for all $Y \in \g a$. 
}
\]
\end{Lem}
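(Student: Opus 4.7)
The plan is to choose a basis of $V$ that simultaneously diagonalizes all of $\g a$, reduce the computation of $\operatorname{vol}(e^Y C \cap C)$ to a product of intervals on each eigenline, and then sandwich $C$ between two rectangular boxes to pass from the model case to a general compact neighborhood.

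Since $\g a$ is split abelian, its elements are simultaneously diagonalizable over $\m R$. I would fix a basis $(e_1,\dots,e_d)$ of $V$ of common eigenvectors and let $\al_1,\dots,\al_d\in \g a^*$ be the corresponding weights counted with multiplicity, so that $Y e_i=\al_i(Y)e_i$ and hence $e^Y e_i=e^{\al_i(Y)}e_i$ for every $Y\in\g a$. Let $(x_1,\dots,x_d)$ be the coordinates associated to this basis; the standard Lebesgue measure on $V=\m R^d$ differs from the product measure $dx_1\cdots dx_d$ only by a positive constant Jacobian $J$, which I will absorb in the final constants $m_C, M_C$.

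For a box $B=\prod_i[-r_i,r_i]$ with $r_i>0$, the diagonal action of $e^Y$ gives
\[
e^Y B\cap B=\prod_i\bigl[-r_i\min(1,e^{\al_i(Y)}),\, r_i\min(1,e^{\al_i(Y)})\bigr],
\]
whose product-measure volume is $\prod_i 2r_i\min(1,e^{\al_i(Y)})$. Using the elementary identity $\min(1,e^t)=e^{(t-|t|)/2}$ together with $\Tr(Y)=\sum_i\al_i(Y)$ and the formula \eqref{eqnrhovys} for $\rho_V$, this simplifies to
\[
\operatorname{vol}(e^Y B\cap B)=J\cdot\operatorname{vol}_*(B)\;e^{\Tr(Y)/2-\rho_V(Y)}.
\]
This is the model computation; it already gives both inequalities of the lemma, with equality constants, when $C$ happens to be such a box.

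For a general compact neighborhood $C$ of $0$, I would choose boxes $B^-\subset C\subset B^+$ of the above form (in the weight coordinates), which is possible since $C$ has nonempty interior and is compact. The elementary monotonicity $A\subset A'\Rightarrow e^Y A\cap A\subset e^Y A'\cap A'$ then yields
\[
\operatorname{vol}(e^Y B^-\cap B^-)\le \operatorname{vol}(e^Y C\cap C)\le \operatorname{vol}(e^Y B^+\cap B^+),
\]
and plugging in the model computation produces the desired two-sided bound with $m_C=J\cdot\operatorname{vol}_*(B^-)$ and $M_C=J\cdot\operatorname{vol}_*(B^+)$. There is no real obstacle here: the only subtle point is the monotonicity step (justifying that $C$ may be replaced by an inner and outer box at the cost of only altering the constants), and once the identity $\min(1,e^t)=e^{(t-|t|)/2}$ is noted, the appearance of the combination $\Tr(Y)/2-\rho_V(Y)$ is automatic from \eqref{eqnrhovys}.
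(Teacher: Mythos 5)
Your proof is correct, and it is exactly the argument the paper delegates to \cite[Prop.~3.6]{BeKoI}: simultaneous real diagonalization of $\g a$, the exact computation $\operatorname{vol}(e^YB\cap B)=\operatorname{vol}(B)\,e^{\Tr(Y)/2-\rho_V(Y)}$ for a coordinate box via $\min(1,e^t)=e^{(t-|t|)/2}$, and the inner/outer box sandwich with the monotonicity $A\subset A'\Rightarrow e^YA\cap A\subset e^YA'\cap A'$. The only addition here over the cited volume-preserving case is carrying the factor $e^{\Tr(Y)/2}$, which you handle correctly.
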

Such a  factor $e^{-\Tr(Y)/2}$ occurs 
in computing the matrix coefficient of the vector 
${\bf 1}_C$ in the 
regular representation  
$L^2(V)$ when the action  on $V$  does not preserve the  volume.
Here ${\rm vol}$ denotes the volume with respect to
the Lebesgue measure on $V$.
The proof of Lemma \ref{lemrovvol} goes similarly to that of  \cite[Prop. 3.6]{BeKoI} which deals with 
the case where the action is volume preserving.

\subsection{Temperedness criterion for $L^2(G/H)$}
\label{sectemcri}

\bq
We can now state precisely our temperedness criterion.
\eq

Let $G$ be a  semisimple Lie group
and $H$ a closed  subgroup of $G$.
Let $\g g$ and $\g h$ be the Lie algebras of $G$ and $H$.
The temperedness criterion for the regular representation of $G$ 
in $L^2(G/H)$ will involve 
the functions
$\rho_\gs h$ and $\rho_{\gs g/\gs h}$
for the $H$-modules $V=\g h$ and $V=\g g/\g h$.

\begin{Thm}
\label{thmlghtem}
Let $G$ be a connected semisimple Lie group with finite center,
$H$ a closed connected subgroup of $G$.
Then, one has the equivalence~:\\
\centerline{$L^2(G/H)$ is $G$-tempered\;\;\;
$\Longleftrightarrow$\;
$\rho_{\gs h}  \le \rho_{\gs g/\gs h} $.}
\end{Thm}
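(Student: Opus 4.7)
The plan is to prove both implications through the volume characterization of Corollary \ref{coralpcol}, namely that $L^2(G/H)$ is $G$-tempered if and only if for every compact $C \subset G/H$ the function $g \mapsto {\rm vol}(gC \cap C)$ lies in $L^{2+\eps}(G)$ for every $\eps > 0$, equivalently (for $K$-invariant $C$) is dominated by a multiple of $\Xi$. A preliminary reduction via Corollary \ref{corgh1h2} replaces $H$ by its derived subgroup $[H,H]$; after this reduction $H$ is unimodular, the space $G/H$ carries a $G$-invariant Radon measure, and we may assume both $G$ and $H$ are Zariski connected algebraic.

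For the necessity I would fix $Y$ in a maximal split abelian subalgebra of $\g h$ and test the volume bound on a small $K$-invariant product neighborhood $C$ of the base point of $G/H$, built from transverse and tangent slices. Lemma \ref{lemrovvol} applied to the $H$-module $\g g/\g h$ provides the lower estimate ${\rm vol}(e^{tY}C \cap C) \gtrsim e^{-t\rho_{\gs g/\gs h}(Y)}$ (the Jacobian factor from the action on $\g h$ disappears thanks to the unimodularity obtained above), while the standard asymptotic $\Xi(e^{tY}) \asymp e^{-t\rho_{\gs h}(Y)}$ controls the right hand side of ${\rm vol}(gC \cap C) \le M_C\,\Xi(g)$. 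Dividing the two inequalities and letting $t \to +\infty$ forces $\rho_{\gs h}(Y) \le \rho_{\gs g/\gs h}(Y)$; Jordan decomposition and $H$-conjugation invariance then extend the inequality to all of $\g h$. This executes Proposition~\ref{prolghtem}.

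The sufficient direction is substantially harder and I would prove it by induction on $\dim G$. Let $P$ be a parabolic subgroup of $G$ of minimal dimension containing $H$ and write $P = LU$, $H = SV$ with $V = H \cap U$ and $S \subset L$ a maximal semisimple subgroup of $H$, and set $F = SU$ as in \eqref{eqnFP}. If $P = G$ the subgroup $H$ is reductive and the main theorem of \cite{BeKoI} applies directly. Otherwise, using a direct algebraic computation (Lemma \ref{lemparsub}) I would translate the hypothesis $\rho_{\gs h} \le \rho_{\gs g/\gs h}$ into the analogous inequality for $S \subset L$; the induction hypothesis then gives $L$-temperedness of the auxiliary representation $\pi_0$ on $L^2(Z_0)$, and since $U$ is amenable (Remark \ref{remtemame}) and $Z_0$ has only an $S$-action among $F$, this upgrades to $P$-temperedness of $\pi_0$ on $L^2(P \times_F Z_0)$. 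Lemma \ref{lemindtem} then gives $G$-temperedness of $\Pi_0 = \Ind_P^G \pi_0$, and Corollary \ref{coralpcol} yields the model estimate ${\rm vol}(g\,C_0 \cap C_0) \le M_{C_0}\,\Xi(g)$ for compact $C_0 \subset G \times_F Z_0$.

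The remaining and central step is to transfer this model estimate from $X_0 = G \times_F Z_0$ to the actual space $X = G/H \simeq G \times_F Z$. I would establish this in two stages: first the fiberwise domination ${\rm vol}(fD \cap D) \le {\rm vol}(fD_0 \cap D_0)$ for $f \in F$ and a suitable compact $D_0 \subset Z_0$ (Proposition \ref{prodom}), then integration over the common base $G/F$ to obtain the global comparison ${\rm vol}(gC \cap C) \le {\rm vol}(gC_0 \cap C_0)$ (Proposition \ref{provolgcc}). Combined with the model estimate and Corollary \ref{coralpcol}, this concludes that $L^2(G/H)$ is $G$-tempered. The genuine obstacle is the fiberwise comparison: for $f \in U$ the left hand side measures intersections of a unipotent left translation on the quotient $Z = U/V$ of unipotent groups, and must be dominated by intersections for the quite different $S$-conjugation action defining $Z_0$ on the same set $U/V$; producing the compact $D_0$ and carrying out this domination is where the specific geometry of $V \triangleleft U$ and of the $S$-action on $U$ really enters.
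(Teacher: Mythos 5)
Your architecture coincides with the paper's: the same reduction via Corollary \ref{corgh1h2}, the same necessity argument via Lemma \ref{lemrovvol}, and the same induction on $\dim G$ through the chain $H\subset F\subset P\subset G$ with the transfer of volume estimates from $X_0=G\times_F Z_0$ back to $G/H$. Two points need attention. First, a slip in the necessity direction: the Harish-Chandra function satisfies $\Xi(e^{tY})\lesssim (1+t\|Y\|)^d\,e^{-t\rho_{\gs g}(Y)/2}$, not $\asymp e^{-t\rho_{\gs h}(Y)}$ ($\Xi$ depends only on $G$, and for instance $\rho_{\gs h}\equiv 0$ when $\g h$ is a split torus while $\Xi$ still decays). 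Comparing with your lower bound gives $\rho_{\gs g}\le 2\,\rho_{\gs g/\gs h}$, and you must then invoke the identity $\rho_{\gs g}=\rho_{\gs h}+\rho_{\gs g/\gs h}$ on $\g a$ to conclude; the final inequality is unaffected, but the intermediate formula as written is false.

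Second, and more seriously, you identify the fiberwise domination ${\rm vol}(fD\cap D)\le{\rm vol}(fD_0\cap D_0)$ of Proposition \ref{prodom} as ``the genuine obstacle'' but supply no argument for it, so the proof is incomplete exactly at its core. The missing ideas are: reduce to $S$ a split torus via the Cartan decomposition $S=K_SA_SK_S$; then induct on $\dim Z$ by fibering $Z=U/V$ over $U/VC$, where $C$ is the center of $U$, using a Chevalley--Rosenlicht adapted basis to get an $S$-equivariant trivialization in which $su$ acts on the vector-group fiber $C/(C\cap V)$ affinely, by $c\mapsto s\,c+s\,c_0(u,z')$; finally the unipotent translation term is discarded by the Brunn--Minkowski inequality, which gives ${\rm vol}((B+v)\cap B)\le{\rm vol}(B\cap B)$ for symmetric convex $B$ (Lemma \ref{lemconset}). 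A lesser omission: since the fiber of $\pi_0|_L$ is $L^2(\g u/\g v)$ rather than a point, the induction cannot be run on Theorem \ref{thmlghtem} alone; one must carry the strengthened statement for $\Ind_H^G(L^2(V))$ (Theorem \ref{thmlgvtem}), with criterion $\rho_{\gs h}\le\rho_{\gs g/\gs h}+2\,\rho_V$, through the induction --- this is exactly what Lemma \ref{lemparsub} produces and Proposition \ref{proequthm} justifies.
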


\begin{Rem} 
The assumption that $G$ and $H$ are connected are not very important. As we shall explain in Corollary \ref{corgh1h2}, Theorem \ref{thmlghtem} is still true when $G$ and $H$ have finitely many connected components as soon as 
the identity component $G_e$ has finite center.
\end{Rem}

\begin{Rem} 
When $H$ is algebraic and 
$\g a$ is a maximal abelian split Lie subalgebra of $\g h$,
Inequality
$\rho_{\gs h}  \le \rho_{\gs g/\gs h} $ holds on $\g h$
if and only if it holds
on~$\g a$.
\end{Rem}

\begin{Rem} 
When $H$ is a minimal parabolic subgroup of $G$ the representation of 
$G$ in $L^2(G/H)$ is tempered because the group $H$ is amenable. 
Our criterion is easy to check in this case since the functions  
$\rho_{\gs h}$ and $\rho_{\gs g/\gs h}$ are equal. 
This example explains why, when $H$ is non-unimodular, our temperedness criterion involves 
the functions $\rho_V$ instead of the functions $\rho^+_V$.
\end{Rem}

\section{Preliminary proofs}
\label{secprepro}

In this section we state a  useful reformulation of Theorem \ref{thmlghtem}
and prove the direct  implication in Theorem \ref{thmlghtem}.

\subsection{The Herz majoration principle}
\label{sechermaj}

\bq
We first explain how to reduce the proof of Theorem \ref{thmlghtem}
to the case where both $G$ and $H$ are algebraic and
how to deal with groups having finitely many connected components.
\eq

\begin{Prop} 
\label{progh1h2}
Let $G$ be a semisimple Lie group with finitely many compo\-nents
such that the identity component $G_e$ has finite center
and
$H'\subset H $ two closed subgroups of $G$.\\
$1)$ If $L^2(G/H)$ is $G$-tempered then $L^2(G/H')$ is $G$-tempered.\\
$2)$ The converse is true when $H'$ is normal in $H$ and $H/H'$ is amenable
(for instance finite,  compact, or abelian).
\end{Prop}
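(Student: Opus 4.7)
The plan splits along the two directions.

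For part (2), the argument is purely formal, based on Fell's continuity of induction and the amenability hypothesis. Since $H'$ is normal in $H$, the quotient $H/H'$ is a group with Haar measure, and its amenability yields via the Hulanicki--Reiter theorem (Remark~\ref{remtemame}(1)) the weak containment $\mathbf{1}_{H/H'}\leq_w \lambda_{H/H'}$ as representations of $H/H'$. Pulling back along $H\to H/H'$ (matrix coefficients pull back, so weak containment is preserved) gives $\mathbf{1}_H\leq_w \Ind_{H'}^H(\mathbf{1})$ as $H$-representations. Applying Fell continuity of induction (the same ingredient as in the proof of Lemma~\ref{lemindtem}) together with induction in stages $\Ind_H^G\circ\Ind_{H'}^H=\Ind_{H'}^G$, one obtains
\[
L^2(G/H)\;=\;\Ind_H^G(\mathbf{1})\;\leq_w\;\Ind_{H'}^G(\mathbf{1})\;=\;L^2(G/H').
\]
If $L^2(G/H')$ is weakly contained in $\lambda_G$, then by transitivity so is $L^2(G/H)$.

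For part (1), the plan is a direct matrix-coefficient comparison exploiting the $G$-equivariant fibration $p\colon G/H'\to G/H$ with fibre $H/H'$. Fix a maximal compact $K\subset G$ and a $K$-invariant compact $C'\subset G/H'$, with image $C:=p(C')$, which is again $K$-invariant and compact. By finitely many local trivializations of $p$, the fibre masses $\mu_y(C'\cap p^{-1}(y))$ admit a uniform bound $M_{C'}$ for $y\in C$; since $p(gC'\cap C')\subset gC\cap C$, fibre integration yields
\[
\mathrm{vol}(gC'\cap C')\;\leq\;M_{C'}\cdot\mathrm{vol}(gC\cap C)\qquad\text{for all }g\in G.
\]
By Corollary~\ref{coralpcol} applied to the tempered representation $L^2(G/H)$, the right-hand side is dominated by $\Xi(g)$ up to a constant, hence lies in $L^{2+\varepsilon}(G)$ for every $\varepsilon>0$; Corollary~\ref{coralpcol} applied in reverse to $G/H'$ then yields the temperedness of $L^2(G/H')$.

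The main obstacle is that $G/H$ and $G/H'$ carry only quasi-invariant measures in general, so Corollary~\ref{coralpcol} in its stated form does not strictly apply. The cure is to use Proposition~\ref{protemal2} directly: for $K$-invariant compact sets the matrix coefficients $\langle\lambda_{G/H'}(g)\mathbf{1}_{C'},\mathbf{1}_{C'}\rangle$ and $\langle\lambda_{G/H}(g)\mathbf{1}_C,\mathbf{1}_C\rangle$ are computed from \eqref{eqnregrec} with continuous Radon--Nikodym cocycle factors which are uniformly bounded on the compact set $C$, so the fibre-integration comparison above survives with a modified constant and still produces the required $\Xi$-domination.
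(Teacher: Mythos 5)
Your part (2) is correct and is essentially the paper's argument: amenability of $H/H'$ gives $\mathbf{1}_H\prec L^2(H/H')$, and inducing to $G$ gives $L^2(G/H)\prec L^2(G/H')\prec\lambda_G$. No issues there.

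Part (1) is where you diverge from the paper, and your route has a genuine gap. The paper proves (1) by applying the Herz majoration principle (Lemma \ref{lemgh1h2}) to $\pi=L^2(H/H')$: writing $L^2(G/H')\simeq\Ind_H^G(L^2(H/H'))$, every matrix coefficient of $L^2(G/H')$ is dominated pointwise on $G$ by a matrix coefficient of $L^2(G/H)=\Ind_H^G(\mathbf 1)$, because the Radon--Nikodym cocycle of the base $G/H$ enters both sides identically and the fibre contribution is absorbed by Cauchy--Schwarz. Your fibre-integration inequality ${\rm vol}(gC'\cap C')\leq M_{C'}\,{\rm vol}(gC\cap C)$ is morally the same estimate, but it is only a comparison of \emph{volumes}, not of \emph{matrix coefficients}, and the two coincide only when the measures on $G/H'$ and $G/H$ are genuinely $G$-invariant. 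Your proposed cure for the quasi-invariant case --- that the Radon--Nikodym factors in \eqref{eqnregrec} are ``uniformly bounded on the compact set $C$'' --- is false: the cocycle $c(g^{-1},x)$ depends on $g$ as well as on $x$, and for $x$ ranging over a fixed compact set it is unbounded as $g$ ranges over $G$ (already for $G=SL(2,\mathbb R)$ acting on $G/P\simeq\mathbb P^1$, where $c(a_t,x)$ grows exponentially in $t$ near the fixed points). This case cannot be dismissed: Proposition \ref{progh1h2}(1) is invoked in Corollary \ref{corgh1h2} with $H'=[H,H]$ and $H$ possibly non-unimodular, so neither $G/H$ nor $G/H'$ need carry an invariant measure there, and Corollary \ref{coralpcol} indeed does not apply. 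The repair is exactly the Herz inequality $|\langle\Pi(g)\varphi,\psi\rangle|\leq\langle\Pi_0(g)|\varphi|,|\psi|\rangle$ from the proof of Lemma \ref{lemgh1h2}, applied with $\varphi=\psi=\mathbf 1_{C'}$ viewed as an $L^2(H/H')$-valued section over $G/H$ (so that $|\varphi|\leq M_{C'}^{1/2}\mathbf 1_C$), followed by Proposition \ref{protemal2}; this packages your fibre bound together with the correct cocycle bookkeeping in one step.
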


\begin{Lem} 
\label{lemgh1h2}
Let $G$ be a semisimple Lie group with finitely many connected components
such that $G_e$ has finite center,
and $H$ be a closed subgroup of $G$.
If the regular representation in $L^2(G/H)$ is $G$-tempered then the induced representation $\Pi=\Ind_H^G(\pi)$ is also $G$-tempered
for any  unitary representation $\pi$ of $H$.
\end{Lem}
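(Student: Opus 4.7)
The plan is to invoke the Herz--Schur majoration principle. Using the model $\mc H_\Pi \simeq L^2(G/H;\mc H_\pi)$ from Section~\ref{secregind} and the explicit formula for $\Pi(g)\psi$, an application of Cauchy--Schwarz inside $\mc H_\pi$ together with the unitarity of $\pi(\si(g,g^{-1}x))$ immediately yields the pointwise bound
\[
|\langle \Pi(g)\psi_1,\psi_2\rangle| \;\leq\; \langle \la_{G/H}(g)|\psi_1|,|\psi_2|\rangle \qquad (g\in G,\; \psi_i\in\mc H_\Pi),
\]
where $|\psi_i|(x):=\|\psi_i(x)\|_{\mc H_\pi}$ defines an element of $L^2(G/H)$ of norm $\|\psi_i\|_{\mc H_\Pi}$. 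The integrand on the right is manifestly nonnegative, which will be essential for the next step.

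To deduce $G$-temperedness of $\Pi$, I would verify the almost $L^2$ condition of Proposition~\ref{protemal2}. Let $\mc D\subset \mc H_\Pi$ be the dense subspace of continuous sections with $K$-invariant compact support; any $\psi\in\mc D$ satisfies $|\psi|\leq \|\psi\|_\infty\,\mathbf 1_{C_{0}}$ for some $K$-invariant compact $C_0\subset G/H$. Monotonicity of the right-hand side of the Herz bound in the nonnegative arguments then gives, for $\psi_1,\psi_2\in\mc D$ both supported in $C_0$ with sup-norms at most $M$,
\[
|\langle \Pi(g)\psi_1,\psi_2\rangle| \;\leq\; M^2\,\langle \la_{G/H}(g)\mathbf 1_{C_{0}},\mathbf 1_{C_{0}}\rangle.
\]
Since $\mathbf 1_{C_{0}}$ is a $K$-invariant (hence $K$-finite, one-dimensional) vector of $L^2(G/H)$ and $\la_{G/H}$ is $G$-tempered by hypothesis, Proposition~\ref{protemal2}(iii) bounds the right-hand side by $M^2\,{\rm vol}(C_0)\,\Xi(g)$, which lies in $L^{2+\eps}(G)$ for every $\eps>0$. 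Hence matrix coefficients of $\Pi$ on the dense subspace $\mc D$ belong to $L^{2+\eps}(G)$, i.e. $\Pi$ is almost $L^2$, and the converse direction of Proposition~\ref{protemal2} concludes that $\Pi$ is $G$-tempered.

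The Herz--Schur bound is the heart of the argument and is merely a one-line application of Cauchy--Schwarz, so no substantive obstacle arises. The only piece of bookkeeping is that Proposition~\ref{protemal2} is formulated for a connected semisimple group with finite center, whereas the lemma permits $G$ to have finitely many components. This is handled up front by Remark~\ref{remtem}: $G$-temperedness of both $\la_{G/H}$ and $\Pi$ is equivalent to their $G_e$-temperedness, so one first restricts attention to $G_e$ (which is connected with finite center by assumption), runs the argument above for $G_e$, and transfers the conclusion back to $G$.
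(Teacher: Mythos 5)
Your proposal is correct and follows essentially the same route as the paper: the Herz majoration bound $|\langle \Pi(g)\psi_1,\psi_2\rangle|\leq\langle\la_{G/H}(g)|\psi_1|,|\psi_2|\rangle$ obtained by Cauchy--Schwarz and unitarity of $\pi$, followed by the almost-$L^2$ characterization of Proposition~\ref{protemal2}. You merely spell out two details the paper leaves implicit (the domination $|\psi|\leq\|\psi\|_\infty\mathbf 1_{C_0}$ combined with part (iii) to get the $L^{2+\eps}$ bound, and the reduction to $G_e$ via Remark~\ref{remtem}), both of which are fine.
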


\begin{proof}[Proof of Lemma \ref{lemgh1h2}] This  classical lemma is called ``Herz majoration principle''
(see \cite[Chap.~6]{BeGu}). 
We recall the short argument  since it will be very useful in Proposition \ref{provolgcc}. 
We use freely the notation of Section \ref{secdefres}.
For a function $\ph$ in the space $L^2(G/H,\mc H_\pi)$ 
of the induced representation $\Pi=\Ind_H^G(\pi)$, we denote by   $|\ph|$
the function in the space $L^2(G/H)$ of the regular representation 
$\Pi_0=\Ind_H^G({\bf{1}})$ given by $|\ph|(x):=\|\ph(x)\|$ for   $x$ in $G/H$.
The space $\mc D$ of bounded functions with compact support 
is dense in $L^2(G/H,\mc H_\pi)$. 
For $\ph$ and $\psi$ in $\mc D$, one can compute the matrix coefficients
\begin{eqnarray*}
\langle\Pi(g)\ph,\psi\rangle
&=&\int_{G/H}c(g^{-1},x)^{1/2}\langle\pi(\si(g,g^{-1}x)) \ph(g^{-1} x),\psi(x )\rangle \rmd\nu(x), \\
|\langle\Pi(g)\ph,\psi\rangle|
& \leq&\int_{G/H}c(g^{-1},x)^{1/2} \|\ph (g^{-1} x)\|\,\|\psi(x )\| \rmd\nu(x)\\
& \leq& \langle\Pi_0(g)|\ph|,|\psi|\rangle  .
\end{eqnarray*}
Since $\Pi_0$ is tempered, these matrix coefficients belong to $L^{2+\eps}(G)$ 
for all $\eps>0$. 
Therefore the representation $\Pi$ is almost $L^2$ and hence is  $G$-tempered by Proposition \ref{protemal2}.
\end{proof}

\begin{proof}[Proof of Proposition \ref{progh1h2}] 
$1)$ This follows from Lemma \ref{lemgh1h2} applied to the regular representation $\pi$ of $H$ in $L^2(H/H')$.

$2)$ Since $H/H'$ is amenable, the trivial representation 
of $H$ is weakly contained in the regular representation 
of $H$ in $L^2(H/H')$.
Therefore, inducing to $G$, 
the regular representation of $G$ in $L^2(G/H)$ is 
weakly contained in the 
regular representation of $G$ in $L^2(G/H')$
and hence is $G$-tempered.
\end{proof}

The following corollary tells us that the temperedness of $L^2(G/H)$
depends only on the Lie algebras $\g g$, $\g h$ 
and does not change if we replace $\g h$ by its derived Lie algebra 
$[\g h,\g h]$.

\begin{Cor} 
\label{corgh1h2}
Let $G$ be a semisimple Lie group with finitely many connected components
such that $G_e$ has a finite center $Z_G$
and
$H$ be a closed subgroup with finitely many connected components.
Then the following are equivalent.\\
\centerline{$
(i) \mbox{ $L^2(G/H)$ is $G$-tempered}
\Longleftrightarrow
(ii) \mbox{ $L^2(G_e/H_e)$ is $G_e$-tempered}
\Longleftrightarrow
$}
\centerline{$
(iii) \mbox{ $L^2(G/HZ_G)$ is $G/Z_G$-tempered}
\Longleftrightarrow
(iv)  \mbox{ $L^2(G/[H,H])$ is $G$-tempered.}
$}
\end{Cor}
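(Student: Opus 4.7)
The plan is to prove each of the three equivalences by a short application of Proposition~\ref{progh1h2} (Herz majoration and its amenable converse), possibly combined with Remark~\ref{remtem}. In every case the extra piece one has to dispose of, namely $H/[H,H]$, $HZ_G/H$, $H/H_e$ or $G/G_e$, is either abelian or finite, hence amenable, so the hypotheses of Proposition~\ref{progh1h2}(2) or of Remark~\ref{remtem} are all satisfied.

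\textbf{The equivalences $(i)\Leftrightarrow(iv)$ and $(i)\Leftrightarrow(iii)$.} Since $[H,H]$ is normal in $H$ with abelian (hence amenable) quotient, Proposition~\ref{progh1h2}(2) applied to $[H,H]\subset H$ immediately gives $(i)\Leftrightarrow(iv)$. For $(i)\Leftrightarrow(iii)$, note that $Z_G$ is normal in $G$ (as a characteristic subgroup of the normal subgroup $G_e$), so $HZ_G$ is a closed subgroup of $G$ containing $H$. For $(i)\Rightarrow(iii)$ one observes that the $Z_G$-invariant subspace of $L^2(G/H)$ is exactly $L^2(G/HZ_G)$ (the $Z_G$-orbits in $G/H$ being precisely the cosets $gHZ_G$, by normality of $Z_G$); a subrepresentation of a tempered representation is tempered, so $L^2(G/HZ_G)$ is $G$-tempered; since it is trivial on the finite normal subgroup $Z_G$, it is then $G/Z_G$-tempered, via the standard fact that for a representation trivial on a compact normal subgroup $N$ of $G$, weak containment in $\lambda_G$ is equivalent to weak containment in $\lambda_{G/N}$ (which is simply the $N$-invariant part of $\lambda_G$). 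Conversely, for $(iii)\Rightarrow(i)$, a $G/Z_G$-tempered representation pulled back to $G$ is weakly contained in $L^2(G/Z_G)$, which is $G$-tempered by Proposition~\ref{progh1h2}(2) applied to $\{e\}\subset Z_G$; hence $L^2(G/HZ_G)$ is $G$-tempered, and Proposition~\ref{progh1h2}(1) with $H\subset HZ_G$ then yields $(i)$.

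\textbf{The equivalence $(i)\Leftrightarrow(ii)$.} Proposition~\ref{progh1h2}(2) applied to $H_e\subset H$ (with $H/H_e$ finite) shows that $(i)$ is equivalent to the $G$-temperedness of $L^2(G/H_e)$, and Remark~\ref{remtem} then gives equivalence with its $G_e$-temperedness, since $G_e$ has finite index in $G$. One identifies the restriction of $L^2(G/H_e)$ to $G_e$ as follows. Because $H_e$ is connected and contains $e$, $H_e\subset G_e$, so the $G_e$-orbits on $G/H_e$ are parametrized by the finite set $G/G_e$, and each such orbit is $G_e$-equivariantly isomorphic to $G_e/gH_eg^{-1}$ for some $g\in G$. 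Connectedness of $gH_eg^{-1}$ forces it into $G_e$, and the inner automorphism $\operatorname{Int}(g)$ restricts to an automorphism of $G_e$ sending $H_e$ to $gH_eg^{-1}$, which transfers temperedness. Since a finite direct sum of unitary representations is tempered iff each summand is, $L^2(G/H_e)$ is $G_e$-tempered iff $L^2(G_e/H_e)$ is $G_e$-tempered, which is $(ii)$.

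\textbf{Main obstacle.} The only step that needs a little care is the last one, the orbit decomposition of $G/H_e$ under $G_e$ and the transfer of temperedness under conjugation by an element of $G\setminus G_e$; everything else is a direct application of Proposition~\ref{progh1h2} and Remark~\ref{remtem}.
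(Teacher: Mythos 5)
Your proof is correct and follows essentially the same route as the paper, which simply invokes Proposition~\ref{progh1h2} and Remark~\ref{remtem} together with the amenability of the quotients $H/H_e$, $HZ_G/H$ and $H/[H,H]$. The extra details you supply (the $G_e$-orbit decomposition of $G/H_e$, the identification of the $Z_G$-invariant part of $L^2(G/H)$, and the transfer of temperedness under conjugation) are exactly the routine verifications the paper leaves implicit.
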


\begin{proof} 
This follows from Proposition \ref{progh1h2} 
and Remark \ref{remtem} since 
the  quotients $H/H_e$, $HZ_G/H$ and $H/[H,H]$ 
are amenable groups.
\end{proof}

\begin{Rem}
\label{remredalg}
Corollary \ref{corgh1h2} is useful to reduce the proof 
of 
Theorem \ref{thmlghtem} to the case where both $G$ and $H$ are algebraic groups.

Indeed, every semisimple Lie algebra $\g g$ is the Lie algebra 
of an algebraic group: the group ${\rm Aut}(\g g)$. Therefore, 
using $(i)\Leftrightarrow (ii)\Leftrightarrow (iii)$,
we can assume that $G$ is algebraic.

Moreover, by Chevalley's ``th\'eorie des repliques'' in \cite{Che48}, for any closed subgroup $H$ of an algebraic group $G$,
there exists two algebraic subgroups $H_1$ and $H_2$ of $G$
whose Lie algebras satisfy 
$$\g h_1\subset \g h \subset \g h_2
\;\;{\rm and}\;\;\;
\g h_1 = [\g h,\g h]= [\g h_2,\g h_2].
$$
Therefore, 
using $(i)\Leftrightarrow (iv)$, we can assume that $H$ is an algebraic subgroup.
\end{Rem}

\begin{Rem}
\label{remreduni}
Since the  group  $[H,H]$ is   unimodular, Corollary \ref{corgh1h2} is also useful to reduce the proof 
of 
Theorem \ref{thmlghtem} to the case where  $H$ is unimodular.
\end{Rem}

\subsection{A strengthening of the main theorem}
\label{secstrthm}

\bq
Theorem \ref{thmlghtem} will be proven by induction on the dimension of $G$.
This induction process forces us to prove simultaneously an apparently 
stronger theorem which involves $L^2(V)$-valued sections over $G/H$ associated to
a finite-dimensional $H$-module $V$.
\eq

\begin{Thm}
\label{thmlgvtem}
Let $G$ be an algebraic semisimple Lie group,
$H$ an algebraic subgroup of $G$ and 
$V$ a real finite-dimensional algebraic representation of $H$.
Then, one has the equivalence~:\\
\centerline{$\Ind_H^G(L^2(V))$ is $G$-tempered\;\;\;
$\Longleftrightarrow$\;
$\rho_{\gs h}  \le \rho_{\gs g/\gs h} +2\,\rho_V $.}
\end{Thm}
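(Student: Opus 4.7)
My plan is to prove both implications of Theorem \ref{thmlgvtem} by induction on $\dim G$, simultaneously with the sufficiency direction of Theorem \ref{thmlghtem} (the case $V = 0$). The added parameter $V$ is exactly what closes the induction under parabolic descent: dropping from $(G, H)$ to the Levi pair $(L, S)$ along the parabolic $P = LU$ of minimal dimension containing $H$ produces an extra $S$-module $U/V_H$ (where $V_H$ denotes the unipotent radical of $H$) that must be packaged with any $V$ already present.

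For the necessity direction, I would assume $\Pi := \Ind_H^G(L^2(V)) \simeq L^2(G \times_H V)$ is $G$-tempered and use a slice-exponential chart to identify a neighborhood of the base point $[e, 0] \in G \times_H V$ with a neighborhood of $0$ in the $\g h$-module $W := \g g/\g h \oplus V$. For $Y$ in a maximal split abelian subalgebra $\g a \subset \g h$, the action of $e^Y$ on this slice is, to leading order, the linear $\g h$-action on $W$. Lemma \ref{lemrovvol} applied to $W$, together with the additivity $\rho_W = \rho_{\gs g/\gs h} + \rho_V$ and a careful bookkeeping of the trace factors entering both the quasi-invariant measure on $X := G \times_H V$ and the Radon--Nikodym cocycle of the induced representation, would give
\[
|\langle \Pi(e^Y) \mathbf{1}_C, \mathbf{1}_C\rangle| \;\asymp\; e^{-\rho_{\gs g/\gs h}(Y) - \rho_V(Y)}
\]
for a $K$-invariant compact $C$ around $[e, 0]$. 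Comparison with the asymptotic $\Xi(e^Y) \asymp e^{-\rho_{\gs g}(Y)/2}$ via Proposition \ref{protemal2} forces $\rho_{\gs g/\gs h}(Y) + \rho_V(Y) \ge \rho_{\gs g}(Y)/2$, and using $\rho_{\gs g} = \rho_{\gs h} + \rho_{\gs g/\gs h}$ this rearranges to $\rho_{\gs h}(Y) \le \rho_{\gs g/\gs h}(Y) + 2\rho_V(Y)$.

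For the sufficiency direction, I would assume $\rho_{\gs h} \le \rho_{\gs g/\gs h} + 2\rho_V$ and induct on $\dim G$. Let $P$ be a parabolic of minimal dimension containing $H$. If $P = G$ then $H$ is semisimple and Theorem \ref{thmlgvtem} reduces to an enhancement of the main result of \cite{BeKoI} obtained by disintegrating $L^2(V)$ along the $H$-orbits in $V^*$ and applying Lemma \ref{lemindtem}. If $P = LU$ is a proper parabolic, write $H = SV_H$, $F = SU$, and factor $\Pi = \Ind_F^G(\Ind_H^F(L^2(V)))$. I would introduce the auxiliary representation $\Pi_0 := \Ind_F^G(L^2(U/V_H \oplus V))$ in which $U$ acts trivially on the fiber $U/V_H$; a short computation with weights (in the spirit of Lemma \ref{lemparsub}) shows that the hypothesis $\rho_{\gs h} \le \rho_{\gs g/\gs h} + 2\rho_V$ is equivalent to $\rho_{\gs s} \le \rho_{\gs l/\gs s} + 2\rho_{V \oplus U/V_H}$, so the induction hypothesis applied to the smaller triple $(L, S, V \oplus U/V_H)$ yields temperedness of $\Ind_S^L(L^2(V \oplus U/V_H))$, and inducing up via Lemma \ref{lemindtem} gives temperedness of $\Pi_0$.

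The main obstacle will be the $V$-twisted fiberwise volume comparison analogous to \eqref{eqnvolvolf}: for every compact $D \subset F \times_H V$ one must produce a compact $D_0 \subset U/V_H \oplus V$ with
\[
\mathrm{vol}(fD \cap D) \;\le\; \mathrm{vol}(fD_0 \cap D_0) \qquad \text{for all } f \in F,
\]
where both volumes carry the twist by the $L^2(V)$-fiber factor. Once this is established by a direct matrix-coefficient analysis inside the unipotent group $U$, the Herz majoration argument of Lemma \ref{lemgh1h2} globalizes the bound to $\mathrm{vol}(gC \cap C) \le \mathrm{vol}(gC_0 \cap C_0)$ on $G \times_H V$, and combining with $\mathrm{vol}(gC_0 \cap C_0) \le M_{C_0} \Xi(g)$ coming from Corollary \ref{coralpcol} applied to the already-tempered $\Pi_0$ completes the induction.
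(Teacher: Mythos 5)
Your necessity argument is essentially the paper's own (Proposition \ref{prolghtem}): take a product of small balls in $\g q_0\oplus V$ around the base point of $G\times_HV$, get the lower bound $\langle\Pi(e^Y){\bf 1}_C,{\bf 1}_C\rangle\geq m_C\,e^{-\rho_{\gs q}(Y)-\rho_V(Y)}$ from Lemma \ref{lemrovvol}, and compare with $\Xi(e^Y)\leq M_0(1+\|Y\|)^de^{-\rho_{\gs g}(Y)/2}$. That half is fine.

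The sufficiency half contains a genuine gap, and it sits exactly where you flag ``the main obstacle''. By carrying the fiber $V$ through the parabolic descent you are forced to prove a $V$-twisted analogue of the volume comparison \eqref{eqnvolvolf}, now between compact subsets of $F\times_HV$ and of $U/V_H\oplus V$. You assert this can be done by ``a direct matrix-coefficient analysis inside $U$'' but do not do it, and it is substantially harder than Proposition \ref{prodom}: the cocycle $\si(f,\cdot)$ takes values in $H=SV_H$ and acts linearly on the $V$-fibers, in general without preserving volume, so the quantity ${\rm vol}(fD\cap D)$ must be replaced by matrix coefficients weighted by Radon--Nikodym densities (compare the paper's caveat after Proposition \ref{provolgcc} for the case $\g h\neq[\g h,\g h]$), and the Chevalley--Rosenlicht plus Brunn--Minkowski induction of Proposition \ref{prodom} does not obviously accommodate an extra twisted linear fiber. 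The paper sidesteps all of this: Proposition \ref{proequthm} together with Lemma \ref{lemtratri} disintegrates $L^2(G\times_HV)=\int_T^{\oplus}L^2(G/H_v)\rmd\nu_T(v)$ over the $H$-orbits in $V$ itself (not $V^*$ --- the action is linear, not by translations, so the spatial decomposition is the relevant one) and shows that $\rho_{\gs h}\le\rho_{\gs q}+2\rho_V$ forces $\rho_{\gs h_v}\le\rho_{\gs q_v}$ for generic $v$, because $H_v$ acts trivially on $V/\g h v$ at points of maximal orbit dimension. This reduces Theorem \ref{thmlgvtem} to Theorem \ref{thmlghtem} applied to the stabilizers $H_v$ inside the same $G$, so the induction and the hard geometric comparison only ever have to be run for the untwisted fiber $Z=U/V_H$. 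Note that you already invoke exactly this orbit disintegration for your base case $P=G$; once you grant it there, it disposes of $V$ in complete generality and your twisted descent becomes unnecessary.
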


Again, we only need to check this 
inequality on a maximal split abelian Lie subalgebra $\g a$ of $\g h$.
Note also that, by Remark \ref{remredalg}, 
Theorem \ref{thmlghtem} is the special case of 
Theorem \ref{thmlgvtem} where $V=\{ 0\}$.

\subsection{The direct implication}
\label{secdirimp}

\bq
We  first prove the direct implication in Theorems \ref{thmlghtem} and \ref{thmlgvtem}.
\eq

From now on, we will set $\g q:=\g g/\g h$.

\begin{Prop}
\label{prolghtem}
Let $G$ be an algebraic semisimple Lie group,
$H$ an algebraic subgroup of $G$ and $V$ an algebraic representation of $H$.
If the representation $\Pi=\Ind_H^G(L^2(V))$ is $G$-tempered then one has 
$\rho_{\gs h}  \le \rho_{\gs q} +2\rho_V$.
\end{Prop}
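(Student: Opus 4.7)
\emph{Plan.} First I would reduce to the case where $H$ is algebraic via Remark~\ref{remredalg}, and invoke the remark after Theorem~\ref{thmlghtem} to restrict attention to a maximal split abelian subalgebra $\mathfrak a\subset \mathfrak h$: it suffices to prove $\rho_{\mathfrak h}(Y)\le \rho_{\mathfrak q}(Y)+2\rho_V(Y)$ for all $Y\in\mathfrak a$. The strategy is to compute matrix coefficients of a well-chosen test vector in two different ways: bound them from above using temperedness and from below using Lemma~\ref{lemrovvol}, then compare asymptotics as one scales $Y\to\infty$ along any ray.

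The key preliminary step is to choose, as a vector-space complement $\mathfrak q\subset\mathfrak g$ of $\mathfrak h$, one that is invariant under $\operatorname{Ad}(A)$ where $A=\exp\mathfrak a$; this is possible because $A$ acts semisimply on $\mathfrak g$ and stabilizes $\mathfrak h$, so one can pick $A$-invariant splittings weight-space by weight-space. With this choice, the map $(\xi,v)\mapsto[\exp(\xi),v]_H$ gives a local chart at the base point of $X:=G\times_H V\simeq\Pi$ identifying a neighborhood with $\mathfrak q\times V$, and for $g=\exp Y$ with $Y\in\mathfrak a$ the crucial computation $g\exp(\xi)g^{-1}=\exp(\operatorname{Ad}(g)\xi)\in\exp\mathfrak q$ shows that in these coordinates $g$ acts \emph{exactly} by the linear map $(\xi,v)\mapsto (e^{\operatorname{ad}Y}\xi,\pi_V(e^Y)v)$, with constant Jacobian $e^{\operatorname{Tr}(Y|_\mathfrak q)+\operatorname{Tr}(Y|_V)}$.

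\emph{Lower bound.} Fix small compact neighborhoods $C_\mathfrak q\subset\mathfrak q$ of $0$ and $C_V\subset V$ of $0$, set $C_0:=C_\mathfrak q\times C_V$ inside $X$ via the chart, and let $C:=K\cdot C_0$, which is compact and $K$-invariant. Using the regular-representation formula \eqref{eqnregrec} and the non-negativity of the integrand, I restrict to $C_0$:
\[
\langle\Pi(\exp Y)\mathbf 1_C,\mathbf 1_C\rangle \;\ge\; \int_{C_0\cap\,e^Y C_0} c(e^{-Y},x)^{1/2}\,d\nu(x).
\]
By the linearity above, the integrand is the constant $e^{-(\operatorname{Tr}(Y|_\mathfrak q)+\operatorname{Tr}(Y|_V))/2}$ and the volume splits as a product. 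Applying Lemma~\ref{lemrovvol} once to the $\mathfrak a$-module $\mathfrak q$ and once to $V$ yields a constant $c_0>0$ independent of $Y$ such that
\[
\langle\Pi(\exp Y)\mathbf 1_C,\mathbf 1_C\rangle\;\ge\;c_0\,e^{-\rho_\mathfrak q(Y)-\rho_V(Y)}\quad\text{for all }Y\in\mathfrak a.
\]

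\emph{Upper bound.} Assuming $\Pi$ is $G$-tempered, Proposition~\ref{protemal2}(iii) applied to the $K$-invariant vector $\mathbf 1_C$ gives $\langle\Pi(g)\mathbf 1_C,\mathbf 1_C\rangle\le\nu(C)\,\Xi(g)$. Extending $\mathfrak a$ to a maximal split abelian subalgebra $\mathfrak a_\mathfrak g$ of $\mathfrak g$ and using bi-$K$-invariance of $\Xi$ with Harish-Chandra's estimate, there is a polynomial $p$ such that $\Xi(\exp Y)\le p(Y)\,e^{-\rho_\mathfrak g(Y)/2}$ for all $Y\in\mathfrak a_\mathfrak g$, where $\rho_\mathfrak g$ is in the sense of Section~\ref{secfunrov}. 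Since $\mathfrak g=\mathfrak h\oplus\mathfrak q$ as $\mathfrak a$-modules, the weights split and $\rho_\mathfrak g=\rho_\mathfrak h+\rho_\mathfrak q$ on $\mathfrak a$.

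\emph{Comparison.} Combining the two bounds,
\[
c_0\,e^{-\rho_\mathfrak q(Y)-\rho_V(Y)}\;\le\;\nu(C)\,p(Y)\,e^{-(\rho_\mathfrak h(Y)+\rho_\mathfrak q(Y))/2}\quad\text{for all }Y\in\mathfrak a.
\]
Taking logarithms, replacing $Y$ by $tY$, dividing by $t$ and letting $t\to\infty$, the polynomial and constant contributions disappear and positive homogeneity of the $\rho$-functions yields $\tfrac12(\rho_\mathfrak h(Y)+\rho_\mathfrak q(Y))\le \rho_\mathfrak q(Y)+\rho_V(Y)$, i.e.\ $\rho_\mathfrak h\le\rho_\mathfrak q+2\rho_V$ on $\mathfrak a$.

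\emph{Main obstacle.} The delicate point is the lower bound: one needs the $g$-action on the chosen neighborhood to be linear with Jacobian independent of the base point, so that the Radon--Nikodym factor $c(g^{-1},\cdot)^{1/2}$ pulls out of the integral without introducing an error that blows up with $\|Y\|$. This is exactly what forces the choice of an $\operatorname{Ad}(A)$-stable complement $\mathfrak q$; with any other complement, second-order terms in the $g$-action would produce a Jacobian varying in $x$ with a $Y$-dependent oscillation and the argument would break down. Once this linearization is in place, the rest is a direct assembly of Lemma~\ref{lemrovvol}, Proposition~\ref{protemal2}, and the standard Harish-Chandra estimate for $\Xi$.
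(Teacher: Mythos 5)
Your proposal is correct and follows essentially the same route as the paper: an $\operatorname{Ad}(A)$-invariant complement $\mathfrak q_0$ of $\mathfrak h$, a product chart $B_0\times B_V$ in $G\times_HV$, a lower bound $m_C\,e^{-\rho_{\mathfrak q}(Y)-\rho_V(Y)}$ on $\langle\Pi(e^Y)\mathbf 1_C,\mathbf 1_C\rangle$ via Lemma \ref{lemrovvol}, and comparison with the Harish-Chandra bound $\Xi(e^Y)\le M_0(1+\|Y\|)^de^{-\rho_{\mathfrak g}(Y)/2}$ together with $\rho_{\mathfrak g}=\rho_{\mathfrak h}+\rho_{\mathfrak q}$ on $\mathfrak a$. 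The only cosmetic difference is that you make explicit the rescaling $Y\mapsto tY$, $t\to\infty$, to kill the polynomial factor, which the paper leaves implicit.
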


\begin{proof}
By \eqref{eqnrexind}  this representation $\Pi$ is also the regular representation of the $G$-space
$X:=G\times_HV$. 
Let $A$ be a maximal split torus of $H$ and $\g a$ be the Lie algebra of $A$.
We choose an $A$-invariant decomposition 
$\g g=\g h\oplus \g q_0$ and small closed balls $B_0\subset \g q_0$ and
$B_V\subset V$ centered at $0$. We can see $B_V$ as a subset of $X$ and the map
\begin{eqnarray}
\label{eqnbobvgv}
B_0\times B_V \longrightarrow G\times_H V, 
\qquad
(u,v)\mapsto \exp (u)v 
\nonumber
\end{eqnarray}
is a homeomorphism onto its image $C$. 
Since $\Pi$ is tempered one has a bound
as in   \eqref{eqnvolgcc}
\begin{equation}
\label{eqnpigxig}
\langle\Pi(g )1_C,1_C\rangle\leq M_C\; \Xi(g)
\;\;\;\mbox{\rm for all $g$ in $G$}.
\end{equation}
We will exploit this bound for elements $g=e^Y$ with $Y$ in $\g a$.
In our coordinate system \eqref{eqnbobvgv} we can choose 
the measure $\nu_X$ to coincide with the Lebesgue measure on 
$\g q_0\oplus V$. Taking into account the Radon--Nykodim derivative and the $A$-invariance of $\g q_0$,
one computes
\begin{eqnarray*}
\label{eqnpigvol}
\langle\Pi(e^Y)1_C,1_C\rangle
\!\!&\geq &\!\! 
e^{-{Tr}_{\gs q_0}(Y)/2} e^{-{Tr}_{V}(Y)/2}\;
{\rm vol}_{\gs q_0}(e^YB_0\cap B_0)\;{\rm vol}_V(e^YB_V\cap B_V),
\end{eqnarray*}
and therefore, using Lemma \ref{lemrovvol}, one deduces
\begin{eqnarray}
\label{eqnpigrho}
\langle\Pi(e^Y )1_C,1_C\rangle
&\geq &m_{_C}\,e^{-\rho_{\gs q}(Y) } e^{-\rho_V(Y)} 
\;\;\;\mbox{\rm for all $Y$ in $\g a$.} 
\end{eqnarray}
Combining  \eqref{eqnpigxig} and \eqref{eqnpigrho} 
with known bounds for the 
spherical function $\Xi$ as in \cite[Prop 7.15]{Kn01},
one gets, for suitable positive constants $d$, $C$,
\begin{equation*}
\label{eqnroxiro}
\frac{m_C}{M_C}e^{-\rho_{\gs q}(Y)  -\rho_V(Y) }\leq \Xi(e^Y)\leq M_0\;(1+\|Y\|)^{d}e^{-\rho_{\gs g}(Y)/2 }  
\;\;\;\mbox{\rm for all $Y$ in $\g a$.} 
\end{equation*}
Therefore one has $\rho_{\gs g}\leq 2\,\rho_{\gs q}+2\,\rho_V$, 
and hence  $\rho_{\gs h}\leq  \rho_{\gs q}+2\,\rho_V$ as required.
\end{proof}

\subsection{Equivalence of the main theorems}
\label{secequthm}
\bq
We have already noticed that 
Theorem \ref{thmlghtem} is a special case of Theorem \ref{thmlgvtem}.
We explain now why Theorem \ref{thmlgvtem} is a consequence of 
Theorem \ref{thmlghtem}.
\eq

\begin{Prop}
\label{proequthm}
Let $G$ be an algebraic semisimple Lie group.
If the conclusion of Theorem \ref{thmlghtem} is true for all  algebraic subgroups $H$ of $G$, then 
the conclusion of Theorem \ref{thmlgvtem} is also true for all  algebraic subgroups $H$ of $G$.
\end{Prop}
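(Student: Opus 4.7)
The converse direction in Theorem \ref{thmlgvtem} was already established in Proposition \ref{prolghtem}, so I focus on showing that $\rho_{\gs h} \le \rho_{\gs g/\gs h} + 2\rho_V$ implies $G$-temperedness of $\Ind_H^G(L^2(V))$. The plan is by induction on $\dim V$, with base case $\dim V = 0$ being exactly the hypothesis of this proposition (Theorem \ref{thmlghtem}).

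For the inductive step, I would use Fourier analysis on the vector group $V$ together with the orbit structure of the dual module. Let $H^*\subset H$ denote the principal stabilizer of the $H$-action on $V^*$ and choose an $H^*$-stable complement $V''$ of $\g h \cdot \xi_0$ in $V^*$ for $\xi_0$ in the principal stratum. A slice-theorem argument on this principal stratum gives an $H$-equivariant identification
\[
V^* \;\simeq\; H \times_{H^*} V''
\]
up to a measure-zero subset. Combined with the Plancherel isometry $L^2(V)\simeq L^2(V^*)$, formula \eqref{eqnrexind}, and induction by stages, this yields an isomorphism of $G$-representations
\[
\Ind_H^G\bigl(L^2(V)\bigr) \;\simeq\; \Ind_{H^*}^G\bigl(L^2(V'')\bigr).
\]
When $H$ acts with a positive-dimensional generic orbit on $V^*$, one has $\dim V''<\dim V$, so the inductive hypothesis of Theorem \ref{thmlgvtem} applied to $(H^*, V'')$ reduces the $G$-temperedness of the right-hand side to $\rho_{\gs h^*}\le\rho_{\gs g/\gs h^*}+2\rho_{V''}$. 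Since $H^*$ is reductive for generic $\xi_0$, choosing $H^*$-invariant complements and using the additivity of $\rho$ under short exact sequences of $H^*$-modules gives
\[
\rho_{\gs h^*} = \rho_{\gs h}-\rho_{\gs h/\gs h^*},\qquad \rho_{\gs g/\gs h^*} = \rho_{\gs g/\gs h}+\rho_{\gs h/\gs h^*},\qquad \rho_V = \rho_{\gs h/\gs h^*}+\rho_{V''}
\]
on $\g h^*$, and substitution shows the required inequality is precisely equivalent to $\rho_{\gs h}\le\rho_{\gs g/\gs h}+2\rho_V$, closing the induction.

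The main obstacle is the degenerate case where every $H$-orbit on $V^*$ has dimension zero, i.e.\ $\g h$ acts trivially on $V^*$ and the induction does not reduce $\dim V$. Here $\rho_V=0$, the hypothesis collapses to $\rho_{\gs h}\le\rho_{\gs g/\gs h}$, and the identity component $H^\circ$ acts trivially on $L^2(V)$; one deduces the temperedness of $\Ind_H^G(L^2(V))$ directly from Theorem \ref{thmlghtem} applied to $H$ together with the Herz majoration principle (Lemma \ref{lemgh1h2}). A secondary technical point is to justify the global slice identification $V^*\simeq H\times_{H^*}V''$ up to a measure-zero subset, for which the principal-stratum theory for algebraic $H$-actions on $V^*$ is invoked.
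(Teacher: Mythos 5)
Your overall strategy (decompose into $H$-orbits, reduce to the generic stabilizer, and do the $\rho$-bookkeeping via additivity over short exact sequences) is in the right spirit, and your three displayed identities are essentially the computation the paper performs. But there are genuine gaps. The central one is the asserted identification $V^*\simeq H\times_{H^*}V''$ up to measure zero, with a \emph{single} linear slice $V''$ and a \emph{single} stabilizer class $H^*$. There is no principal-orbit-type theorem for linear actions of general (in particular non-reductive) algebraic groups: generic stabilizers need not be conjugate to one another, there is no Luna-type slice providing a linear $H^*$-module $V''$, and without these the unitary equivalence $\Ind_H^G(L^2(V))\simeq\Ind_{H^*}^G(L^2(V''))$ is not available. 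Your appeal to ``the principal-stratum theory for algebraic $H$-actions'' does not supply this. Relatedly, the claim that $H^*$ is reductive for generic $\xi_0$ is false (take $H=SL(2,\mathbb{R})$ acting on $\mathbb{R}^2$: generic stabilizers are unipotent); fortunately you do not really need it, since $\rho$ is additive over short exact sequences without invariant complements, because generalized eigenvalues are.

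The paper's proof avoids all of this, and also avoids the Fourier transform and the induction on $\dim V$. It decomposes $V$ itself (not $V^*$): choose a Borel transversal $T\subset V$ meeting each $H$-orbit exactly once, which yields a \emph{direct integral} $L^2(G\times_HV)=\int_T^{\oplus}L^2(G/H_v)\,\rmd\nu_T(v)$ with possibly varying stabilizers $H_v$; since a direct integral of tempered representations is tempered, it suffices to treat $L^2(G/H_v)$ for almost every $v$. The key input you are missing is Lemma \ref{lemtratri}: for $v$ on an orbit of maximal dimension, $H_v$ acts \emph{trivially} on $V/\g h v$. In your notation this says $\rho_{V''}=0$ on $\g h^*$, so there is no residual module, your induction collapses after one step, and the required temperedness of each $L^2(G/H_v)$ follows directly from the hypothesis (Theorem \ref{thmlghtem} applied to $H_v$, via $2\rho_{\gs h_v}\le\rho_{\gs g/\gs h}+\rho_{\gs h}=\rho_{\gs g}$ on $\g h_v$). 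Your treatment of the degenerate case via Herz majoration is correct, but as it stands the main inductive step does not go through without repairing the slice identification or replacing it by the measurable orbit decomposition.
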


The proof relies on the following lemma.

\begin{Lem}
\label{lemtratri}
Let $H$ be a Lie group and $V$ a finite-dimensional representation of $H$.
Let $v\in V$ be a point  whose orbit $Hv$ has maximal dimension and $H_v$ be the stabilizer of $v$ in $H$. 
Then the action of $H_v$ on  $V/\g h v$ is trivial.  
\end{Lem}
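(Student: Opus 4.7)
The plan is to establish first the infinitesimal statement $\g h_v\cdot V\subseteq\g h v$ (equivalently, that the Lie algebra $\g h_v$ acts trivially on the quotient $V/\g h v$), and then deduce the group-level triviality of the identity component of $H_v$ by exponentiation. The main input is that the maximality of $\dim Hv$ at $v$, combined with lower semicontinuity of $w\mapsto \dim\g h\cdot w$, gives an open neighborhood $W$ of $v$ on which this dimension, hence $\dim \g h_w$, is constant equal to $d:=\dim\g h_v$.

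On $W$ the assignment $w\mapsto\g h_w$ is therefore a smooth rank-$d$ subbundle of the trivial bundle $W\times\g h$ (the kernels of the linear family $X\mapsto X\cdot w$ of constant rank). Given any $Y_0\in\g h_v$, one can locally lift it to a smooth section $Y\colon U\to\g h$ of this subbundle on a neighborhood $U$ of $v$, so that $Y(v)=Y_0$ and the identity $Y(w)\cdot w=0$ in $V$ holds for every $w\in U$. Differentiating this identity at $w=v$ along an arbitrary direction $u\in V$ yields
\[
0=\bigl(DY(v)\cdot u\bigr)\cdot v\;+\;Y_0\cdot u,
\]
so $Y_0\cdot u=-\bigl(DY(v)\cdot u\bigr)\cdot v\in\g h\cdot v=\g h v$. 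Since $Y_0\in\g h_v$ and $u\in V$ are arbitrary, this gives $\g h_v\cdot V\subseteq\g h v$.

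From this infinitesimal statement the group-level conclusion (at least for the identity component of $H_v$) is automatic. For $Y\in\g h_v$ and $X\in\g h$ one has $Y\cdot(Xv)=[Y,X]v+X\cdot(Yv)=[Y,X]v\in\g h v$, so $\g h_v$ preserves $\g h v$; by induction $Y^n\cdot V\subseteq\g h v$ for every $n\geq 1$, and thus $(\exp Y-\mathrm{Id})V\subseteq\g h v$. The identity component of $H_v$, being generated by $\exp(\g h_v)$, therefore acts trivially on $V/\g h v$. The step I expect to be the main technical point is the very first one: verifying that $w\mapsto\g h_w$ is indeed a smooth subbundle of constant rank near $v$, so that the smooth section $Y$ used in the differentiation actually exists. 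Once constancy of the stabilizer dimension is in hand this is a standard linear-algebra fact, but it is the place where the hypothesis of maximal orbit dimension is genuinely used; the subsequent differentiation and exponentiation are then entirely formal.
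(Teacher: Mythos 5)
Your proof is correct, and it reaches the conclusion by a genuinely different (though closely related) mechanism. The paper argues by contradiction: if some $Y\in\g h_v$ and $w\in V$ had $Yw\notin\g h v$, then for the perturbed point $v_\eps=v+\eps w$ the tangent space $\g h\, v_\eps$ would contain both the subspace $\g m\, v_\eps$ close to $\g h v$ (where $\g m$ is a complement of $\g h_v$ in $\g h$) and the extra vector $\eps^{-1}Yv_\eps=Yw$, forcing $\dim \g h\, v_\eps>\dim\g h v$ for small $\eps$ and contradicting maximality of the orbit dimension. You instead run the argument directly: maximality plus lower semicontinuity of $w\mapsto\dim\g h w$ gives local constancy of $\dim\g h_w$, hence a constant-rank kernel bundle $w\mapsto\g h_w$, and differentiating a local section through $Y_0$ yields $Y_0\cdot u\in\g h v$. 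Both are first-order perturbation arguments exploiting the same maximality hypothesis; yours requires the (standard but not free) existence of a smooth local section of the kernel bundle, which you rightly flag as the technical crux, while the paper's version avoids this at the cost of an explicit $\eps$-computation. Note that both arguments really establish the infinitesimal statement $\g h_v\cdot V\subseteq\g h v$; your final exponentiation step, upgrading this to the identity component of $H_v$, is a welcome addition that the paper leaves implicit, and in any case only the infinitesimal version, via \eqref{eqnrhovhv}, is what is used in the proof of Proposition \ref{proequthm}.
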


\begin{proof}[Proof of Lemma \ref{lemtratri}]
Assume by contradiction that there exist $Y$ in $\g h$ 
and $w$ in $V$ such that the vector 
$Yw$ does not belong to $\g h v$.

Choose a complementary subspace $\g m$ of $\g h_v$ in $\g h$ so that $\g h= \g h_v\oplus \g m$.
Choose also a point $v_\eps= v+\eps w$ 
near $v$.  For $\eps$ small,
the tangent space $\g h\, v_\eps$ to the orbit
$H\, v_\eps$ contains both the subspace $\g m\, v_\eps$
which is near $\g m \,v=\g h\, v$
and the vector $\eps^{-1}Yv_\eps=Y w$.
Therefore, for $\eps$ small, one has the inequality
$\dim \g h v_\eps>\dim \g h v$ which gives us a contradiction.
\end{proof}

\begin{proof}[Proof of Proposition \ref{proequthm}] We assume that $\rho_{\gs h}\leq  \rho_{\gs q}+2\,\rho_V$ and we want to prove, using Theorem \ref{thmlghtem}, that the regular representation of $G$
in $L^2(G\times_H V)$ is tempered.
Since the action is algebraic, there exists a Borel measurable subset 
$T\subset V$ which meets each of these $H$-orbits in exactly one point.
Let $\nu_V$ be a probability measure on $V$ with positive density and $\nu_T$ be the probability measure on $T\simeq H\backslash V$ given as the image of $\nu_V$.
One has a direct integral decomposition 
of the regular representation
\begin{equation*}
\label{eqnregint}
L^2(G\times_H V) =\int_T^{\oplus}L^2(G/H_v)\rmd\nu_T(v)
\end{equation*}
where $H_v$ is the stabilizer of $v$ in $H$.  
Since the direct integral of tempered represen\-tations is tempered,
we only need to prove that, for $\nu_T$-almost all $v$ in $T$,
\begin{equation}
\label{eqnghvtem}
L^2(G/H_v) 
\;\;\mbox{is $G$-tempered.}
\end{equation}
Our assumption implies that 
\begin{equation}
\label{eqnrhohqv}
\rho_{\gs h}(Y)\leq  \rho_{\gs q}(Y)+2\,\rho_V(Y)
\;\;\mbox{for all $Y$ in $\g h_v$}
\end{equation}
For $\nu_T$-almost all $v$ in $T$, the orbit $Hv$ has maximal dimension,
hence, by Lemma \ref{lemtratri},
the action of $\g h$ on the quotient $V/\g h v$ is trivial,
and therefore one has the equality 
\begin{equation}
\label{eqnrhovhv}
\rho_{V}(Y)=  \rho_{\gs h}(Y)-\rho_{\gs h_v}(Y)
\;\;\mbox{for all $Y$ in $\g h_v$}
\end{equation}
Combining \eqref{eqnrhohqv} and \eqref{eqnrhovhv}, one gets,
for $\nu_T$-almost all $v$ in $T$,
\begin{equation*}
\label{eqnrhohvg}
2\,\rho_{\gs h_v}(Y)\leq
\rho_{\gs q}(Y)+\rho_{\gs h}(Y)=\rho_{\gs g}(Y)
\;\;\mbox{for all $Y$ in $\g h_v$}
\end{equation*}
which can be rewritten as 
the temperedness criterion 
$
\rho_{\gs h_v}(Y)\leq
\rho_{\gs q_v}(Y)
$ 
for 
$L^2(G/H_v)$ in Theorem \ref{thmlghtem} and hence proves \eqref{eqnghvtem}.
\end{proof}

\section{Using parabolic subgroups}
\label{secparsub}

The aim of this section is to prove the converse implication in 
Theorem \ref{thmlghtem}. As we have seen in Remarks
\ref{remredalg} and \ref{remreduni}, 
we can assume that  $G$ is a Zariski connected algebraic group and that 
$H$ is a Zariski connected algebraic subgroup such that $\g h=[\g h,\g h]$.

The  proof relies on the presence of two nice intermediate 
subgroups 
$$
H\subset F \subset P\subset G.
$$

\subsection{The intermediate subgroups}
\label{secintpar}

\bq
We first explain the construction of these intermediate subgroups
$F$ and $P$.
\eq

Let $G$ be an algebraic semisimple Lie group
and $H$ a Zariski connected  algebraic subgroup of $G$
such that $\g h=[\g h,\g h]$.

\begin{LemDef}
\label{defparsub}
We fix a parabolic subgroup $P$ of $G$ of minimal dimension
that contains $H$ and denote by $U$ the unipotent radical of $P$. 
There exists a reductive subgroup $L\subset P$ such that 
$P=LU$ and $H=(L\cap H)(U\cap H)$. Moreover the group
$S:=L\cap H$ is semisimple and the group $V:=U\cap H$ 
is the unipotent radical of $H$.
We denote by $F$ the group  $F=SU$.
\end{LemDef}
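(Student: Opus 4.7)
The plan is to exploit the minimality of $P$ to match the Levi decomposition of $P$ with the abstract Levi decomposition of $H$. Choose any Levi subgroup $L_0$ of $P$, write $P=L_0U$ with $U=R_u(P)$, and consider the quotient projection $\pi\colon P\to L_0\cong P/U$.

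The first key step is to show that $\pi(H)$ is reductive. If it were not, then by the Borel--Tits theorem $\pi(H)$ would lie in a proper parabolic $Q$ of $L_0$, and then $\pi^{-1}(Q)=QU$ would be a parabolic of $G$ of strictly smaller dimension than $P$ still containing $H$, contradicting the choice of $P$. Since $\g h=[\g h,\g h]$ by assumption, its image $\pi(\g h)$ is also perfect, and a perfect reductive Lie algebra is semisimple; hence $\pi(H)$ is semisimple.

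Next, by Mostow's theorem $H$ itself admits a Levi decomposition $H=S\ltimes V$ where $V=R_u(H)$ is the unipotent radical and $S$ is reductive; again the perfectness of $\g h$ forces $S$ to be semisimple. Since $S$ is a reductive subgroup of $P$, another application of Mostow's theorem places $S$ inside some Levi $L$ of $P$, so one may assume $P=LU$ and $S\subset L\cap H$. The remaining assertions now follow from the uniqueness of the factorization $P=L\ltimes U$: the group $\pi(V)$ is a normal unipotent subgroup of the semisimple group $\pi(H)$ and therefore trivial, giving $V\subset\ker\pi=U$; conversely $U\cap H$ is a normal unipotent subgroup of $H$ and so contained in $R_u(H)=V$, whence $V=U\cap H$ is the unipotent radical of $H$; and given $x\in L\cap H$, writing $x=sv$ with $s\in S\subset L$ and $v\in V\subset U$ forces $v=s^{-1}x\in L\cap U=\{e\}$, so $x=s\in S$ and $S=L\cap H$. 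The factorization $H=SV=(L\cap H)(U\cap H)$ is then the Levi decomposition of $H$.

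The main obstacle is establishing the compatibility of the two Levi decompositions simultaneously, which rests on the twin facts that Borel--Tits forces a non-reductive subgroup to factor through a smaller parabolic (used to pin down the minimal $P$) and that Mostow's theorem allows a chosen semisimple Levi of $H$ to be placed inside a Levi of $P$. Once these are in place, everything else drops out from the semidirect-product uniqueness in $P=L\ltimes U$.
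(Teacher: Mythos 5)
Your proof is correct and follows essentially the same route as the paper: minimality of $P$ forces the image of $H$ in $P/U$ to be reductive (the paper cites the fact that a Zariski connected subgroup of a reductive group not contained in any proper parabolic is reductive, which is the Borel--Tits argument you spell out), perfectness of $\g h$ upgrades this to semisimple, and Mostow's conjugacy of Levi subgroups places $S$ inside a Levi $L$ of $P$. The remaining identifications $V=U\cap H=R_u(H)$ and $S=L\cap H$ are handled the same way in both arguments.
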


\begin{proof}
The group $V:=U\cap H$ is a unipotent normal subgroup of $H$. 
The quotient $S':=H/V$ is a Zariski connected  subgroup of
the reductive group $P/U$ which is not contained in any proper
parabolic subgroup of $P/U$. 
Therefore, by \cite[Sec.~VIII.10]{Bou7a9}
this group $S'$ is reductive. 
Since $\g h=[\g h,\g h]$, this group $S'$ is semisimple
and there exists a semisimple subgroup $S\subset H$ such that 
$H=SV$. Since $S$ is semisimple, the group $V$ is the unipotent radical of $H$.
Since maximal reductive subgroups $L$ of $P$ are $U$-conjugate,
one can choose $L$ containing $S$ and therefore one has $S=L\cap H$.
\end{proof}

The following two lemmas will be useful 
in our induction process.

\begin{Lem}
\label{lemparsub}
With the notation of Definition \ref{defparsub},
the following two functions on $\g s$ are equal~:
\begin{equation}
\label{eqnparsub}
\rho_{\gs g/\gs h}-\rho_{\gs h}=
\rho_{\gs l/\gs s}+2\, \rho_{\gs u/\gs v}
-\rho_{\gs s}. 
\end{equation}
\end{Lem}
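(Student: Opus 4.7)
The plan is to verify the identity by computing both sides after decomposing every vector space involved into convenient summands for the $\mathfrak{s}$-action, and then reducing the identity to a single symmetry statement between the nilradical $\mathfrak{u}$ and its opposite $\bar{\mathfrak{u}}$.

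First I would fix a Levi decomposition of the ambient algebra: write $\mathfrak{g} = \bar{\mathfrak{u}} \oplus \mathfrak{l} \oplus \mathfrak{u}$, where $\bar{\mathfrak{u}}$ is the nilradical of the parabolic opposite to $P$ with respect to $L$. By Lemma-Definition 4.1 we have $\mathfrak{h} = \mathfrak{s} \oplus \mathfrak{v}$ with $\mathfrak{s} \subset \mathfrak{l}$ and $\mathfrak{v} \subset \mathfrak{u}$, so as $\mathfrak{s}$-modules
\[
\mathfrak{g}/\mathfrak{h} \;\cong\; \bar{\mathfrak{u}} \,\oplus\, \mathfrak{l}/\mathfrak{s} \,\oplus\, \mathfrak{u}/\mathfrak{v}
\qquad \text{and} \qquad
\mathfrak{h} \;\cong\; \mathfrak{s} \oplus \mathfrak{v}.
\]
Next I would use the general additivity principle for $\rho$ in $\mathfrak{s}$-equivariant short exact sequences: since the eigenvalues of $Y$ on a middle term are, with multiplicity, the union of those on the sub and quotient, one has $\rho_W = \rho_{W'} + \rho_{W''}$ whenever $0 \to W' \to W \to W'' \to 0$ is exact. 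Applying this summand-by-summand yields
\[
\rho_{\mathfrak{g}/\mathfrak{h}} - \rho_{\mathfrak{h}} \;=\; \rho_{\bar{\mathfrak{u}}} + \rho_{\mathfrak{l}/\mathfrak{s}} + \rho_{\mathfrak{u}/\mathfrak{v}} - \rho_{\mathfrak{s}} - \rho_{\mathfrak{v}}
\]
on $\mathfrak{s}$, and the desired identity becomes equivalent to $\rho_{\bar{\mathfrak{u}}} = \rho_{\mathfrak{u}/\mathfrak{v}} + \rho_{\mathfrak{v}} = \rho_{\mathfrak{u}}$ on $\mathfrak{s}$.

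The main (and only nontrivial) step is then to prove $\rho_{\bar{\mathfrak{u}}} = \rho_{\mathfrak{u}}$ on $\mathfrak{l}$, which I expect to be the crux of the argument. The Killing form on $\mathfrak{g}$ provides an $L$-equivariant non-degenerate pairing between $\mathfrak{u}$ and $\bar{\mathfrak{u}}$, so $\bar{\mathfrak{u}}$ is the $L$-module dual to $\mathfrak{u}$. Consequently, for every $Y \in \mathfrak{l}$, the eigenvalues of $Y$ on $\bar{\mathfrak{u}}$ are (with multiplicity) the negatives of those on $\mathfrak{u}$, whence $\rho_{\bar{\mathfrak{u}}}(Y) = \rho_{\mathfrak{u}}(-Y)$. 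Combined with the built-in symmetry $\rho_V(-Y) = \rho_V(Y)$ noted in Section 2.3, this gives $\rho_{\bar{\mathfrak{u}}} = \rho_{\mathfrak{u}}$ on $\mathfrak{l}$, and restricting to $\mathfrak{s} \subset \mathfrak{l}$ finishes the proof.

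The only mild subtlety is to make sure the additivity of $\rho$ under short exact sequences is invoked cleanly — it follows from formula (2.?) expressing $\rho_V$ as a weighted sum over $\mathfrak{a}$-weights, applied after passing to a maximal split abelian subalgebra of $\mathfrak{s}$ as permitted by the remark following Theorem 2.10; equivalently, one can argue directly from the definition using generalized eigenspaces since exact sequences are compatible with the decomposition $V = V_+ \oplus V_0 \oplus V_-$.
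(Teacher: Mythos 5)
Your proof is correct and follows essentially the same route as the paper's: the paper's one-line proof rests on the identity $\rho_{\gs g/\gs p}=\rho_{\gs u}$ on $\g s$ (which is exactly your duality argument $\rho_{\bar{\gs u}}=\rho_{\gs u}(-\,\cdot\,)=\rho_{\gs u}$, since $\g g/\g p\cong\bar{\g u}\cong\g u^*$ as $\g l$-modules) together with the additivity $\rho_{\gs g/\gs h}=\rho_{\gs u}+\rho_{\gs l/\gs s}+\rho_{\gs u/\gs v}$ and $\rho_{\gs h}=\rho_{\gs s}+\rho_{\gs v}$. You have simply spelled out the details the authors leave implicit.
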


\begin{proof} Since $\rho_{\gs g/\gs p}=\rho_{\gs u}$,
one has the equalities of functions on $\g s$,\\
\mbox{ }\hfill
$
\rho_{\gs g/\gs h}=
\rho_{\gs u}+\rho_{\gs l/\gs s}+ \rho_{\gs u/\gs v}
\;\;\; {\rm and}\; \;\;\;
\rho_{\gs h}=
\rho_{\gs s}+ \rho_{\gs v}.
$\hfill
\end{proof}

\begin{Lem}
\label{lemparsub2}
Let $P=LU$ be a real algebraic group which is
a semidirect product of a reductive subgroup $L$ and its unipotent radical $U$. Let $\pi_0$ be a unitary representation
of $P$ which is $L$-tempered and trivial on $U$.
Then the representation $\pi_0$ is also $P$-tempered.
\end{Lem}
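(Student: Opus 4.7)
The plan is to exploit the fact that the unipotent radical $U$ is amenable, so that lifting tempered representations from the quotient $L \simeq P/U$ preserves temperedness. The argument proceeds in three short steps, using only the Hulanicki--Reiter theorem quoted in Remark \ref{remtemame}, the stability of weak containment under induction (Lemma \ref{lemindtem}), and elementary functoriality of weak containment under quotient maps.

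First I would check that the regular representation of $L$, viewed as a representation of $P$ via the projection $\mathrm{pr}\colon P \to P/U \simeq L$, is $P$-tempered. Since $U$ is unipotent and hence amenable, by the Hulanicki--Reiter theorem the trivial representation $\mathbf{1}_U$ is weakly contained in the regular representation $\lambda_U$. Inducing from $U$ to $P$ preserves weak containment, so
\[
\mathrm{Ind}_U^P(\mathbf{1}_U) \;\prec\; \mathrm{Ind}_U^P(\lambda_U) \;=\; \lambda_P .
\]
Because $U$ is normal in $P$ with quotient $L$, one has the identification $\mathrm{Ind}_U^P(\mathbf{1}_U) \simeq L^2(P/U) \simeq \lambda_L \circ \mathrm{pr}$ as a $P$-representation. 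Thus $\lambda_L \circ \mathrm{pr}$ is $P$-tempered.

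Next, since $\pi_0$ is $L$-tempered, it is weakly contained in $\lambda_L$ as an $L$-representation, which means that every matrix coefficient of $\pi_0$ is a uniform limit on compact subsets of $L$ of sums of matrix coefficients of $\lambda_L$. Since $\pi_0$ is trivial on $U$, it factors as $\pi_0 = \pi_0' \circ \mathrm{pr}$ for a representation $\pi_0'$ of $L$; the matrix coefficients of $\pi_0$ on $P$ are the pullbacks under $\mathrm{pr}$ of those of $\pi_0'$ on $L$. Since the image of a compact subset of $P$ under $\mathrm{pr}$ is compact in $L$, pulling back the uniform approximation through $\mathrm{pr}$ shows that $\pi_0 \prec \lambda_L \circ \mathrm{pr}$ as $P$-representations.

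Combining the two steps, weak containment is transitive, so $\pi_0 \prec \lambda_P$, i.e.\ $\pi_0$ is $P$-tempered. There is no real obstacle here; the only point to be careful about is the functoriality of weak containment under the quotient map $P\to L$, which is essentially the observation that matrix coefficients on $P$ that are constant on $U$-cosets are the same data as matrix coefficients on $L$.
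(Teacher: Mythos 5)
Your proof is correct and follows essentially the same route as the paper: the paper likewise passes from $\pi_0\prec\lambda_L$ as $L$-representations to $\pi_0\prec L^2(P/U)$ as $P$-representations (since $U$ acts trivially on both sides), and then uses amenability of $U$ together with Lemma \ref{lemindtem} to see that $L^2(P/U)=\Ind_U^P(\mathbf 1)$ is $P$-tempered. You have merely unwound the appeal to Lemma \ref{lemindtem} into an explicit weak-containment computation; the content is identical.
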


\begin{proof}
The weak containment $\pi_0 \prec L^2(L)$ as unitary representations of $L$
implies 
the weak containment $\pi_0 \prec L^2(P/U)$ as unitary representations of $P$
because $U$ acts trivially on the both sides.
Since $U$ is amenable, the
trivial representation of $U$ is $U$-tempered,
therefore  by Lemma \ref{lemindtem} the
regular representation of $P$ in $L^2(P/U)$ is 
$P$-tempered, and $\pi_0$ is also $P$-tempered.
\end{proof}

\subsection{Bounding volume of compact sets}
\label{secvolcom}
\bq
The proof of Theorem \ref{thmlghtem} relies on a control of the volume of the intersection of translates of compact sets in $X=G/H$.
We first explain how to bound such  volumes in $Z=F/H$.    
This bound is  quite general. 
\eq

\begin{Prop}
\label{prodom}
Let $F=SU$ be a real algebraic  group which is a semidirect product of a reductive subgroup $S$ and 
its unipotent radical $U$. 
Let $H=SV$ be an algebraic subgroup of $F$ containing
$S$  where $V=U\cap H$. 
Let $Z$ be the $F$-space $Z=F/H=U/V$ endowed with a $U$-invariant Radon measure.
Then for every compact subset $D\subset Z$, there exists a compact subset 
$D_0\subset Z$ such that for all $s\in S$ and $u\in U$, one has
\begin{equation}
\label{eqnvolsus}
{\rm vol}(su D\cap D)\leq {\rm vol}(sD_0\cap D_0).
\end{equation}
\end{Prop}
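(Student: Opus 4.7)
The plan is to exploit the normality of $U$ in $F=SU$: writing $u':=sus^{-1}\in U$, the action of $su$ on $Z=F/H\simeq U/V$ equals the conjugation action $\sigma_s$ of $s$ followed by left translation by $u'$. Since the Radon measure on $Z$ is $U$-invariant, left translation by $u'$ is measure-preserving and
\[
{\rm vol}(suD\cap D)={\rm vol}(u'\sigma_s(D)\cap D)={\rm vol}(\sigma_s(D)\cap u'^{-1}D).
\]
Hence it suffices to find $D_0\supset D$ compact such that this last volume is bounded, uniformly in $u'\in U$, by ${\rm vol}(\sigma_s(D_0)\cap D_0)$.

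The model case is abelian $U$, when $Z$ is a real vector space and left $U$-translation is Euclidean translation. Any point $z_0\in(\sigma_s(D)+x)\cap D$ satisfies $z_0=\sigma_s(d_2)+x=d_1$ for some $d_1,d_2\in D$; translating the intersection by $-z_0$ yields an injection into $\sigma_s(D-d_2)\cap(D-z_0)\subset \sigma_s(D-D)\cap(D-D)$, so $D_0:=D-D$ works. For general unipotent $U$, I would use an $S$-invariant splitting $\g u=\g v\oplus\g w$ (afforded by reductivity of $S$) and the exponential map to coordinatize $Z$ by $\g w$. In these coordinates $\sigma_s$ becomes a linear automorphism (the restriction of ${\rm Ad}\,s$ to $\g w$), and left $U$-translation becomes a polynomial, Lebesgue-measure-preserving transformation. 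Setting $D_0$ to be the closure of $D-D$ in $\g w$-coordinates (equivalently, the image in $Z$ of $\widetilde D\widetilde D^{-1}$ for a compact lift $\widetilde D\subset U$ of $D$ containing the identity), one verifies the inequality by the same translation-and-inclusion argument as in the abelian case.

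\textbf{The main obstacle} is that $V$ need not be normal in $U$, so when we lift points of $Z$ and perform the translation-by-$z_0^{-1}$ trick, residual $V$-factors appear that a priori do not lie in $\widetilde D\widetilde D^{-1}$. These factors are handled by observing that $V\subset U$ also acts on $Z$ by measure-preserving left translations, so the relevant volumes are unchanged under absorbing these factors; equivalently, one can enlarge $D_0$ by a controlled compact $V$-slice (which stays compact because the splitting $\g u=\g v\oplus\g w$ makes all calculations polynomial in bounded data). In particular, $D_0$ remains compact since $D$ is compact and only bounded portions of $U$ and $V$ enter the relevant estimates, completing the proof.
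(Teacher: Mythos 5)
Your reduction to bounding ${\rm vol}(u'\sigma_s(D)\cap D)$ with $u'=sus^{-1}$ is fine, and your translation-and-inclusion trick is correct and elegant in the abelian case; it even works verbatim on the group $U$ itself when $V$ is trivial (left-translate by $z_0^{-1}$ where $z_0=u'\sigma_s(\tilde d_2)=\tilde d_1$, landing in $\sigma_s(\tilde D^{-1}\tilde D)\cap\tilde D^{-1}\tilde D$), giving a nice alternative to the Brunn--Minkowski step (Lemma \ref{lemconset}) that the paper uses. But the step you yourself flag as ``the main obstacle'' is a genuine gap, and your proposed fix does not close it. Concretely: lift $D$ to a compact $\tilde D\subset U$ and take a point $z_0=\tilde d_1V$ of the intersection, so $\tilde d_1=u'\sigma_s(\tilde d_2)v_2$ with $v_2\in V$. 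After left translation by $\tilde d_1^{-1}$ the first set becomes $v_2^{-1}\sigma_s(\tilde d_2^{-1}\tilde D)V/V$, where $v_2=\sigma_s(\tilde d_2)^{-1}(u')^{-1}\tilde d_1$ is \emph{unbounded} as $u'$ ranges over $U$. Because $V$ is not normal in $U$, the factor $v_2^{-1}$ cannot be pushed to the right and absorbed into $V$, so the translated intersection is not contained in any fixed compact set; choosing the other representative (so that $v_2=e$) merely moves the unbounded $V$-factor to the other side of the intersection. Your assertion that ``only bounded portions of $U$ and $V$ enter the relevant estimates'' is therefore unjustified and, as stated, false. Likewise, in $\g w$-coordinates for a splitting $\g u=\g v\oplus\g w$, left translation by $u'$ is a measure-preserving \emph{polynomial} map whose distortion grows with $u'$, not a translation, so recentering at $z_0$ does not confine the family to a compact set and ``the same argument as in the abelian case'' does not apply.

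The missing idea is the paper's induction along the central series of $U$: one quotients by the center $C$ of $U$, views $Z=U/V\rightarrow Z'=U/VC$ as a principal bundle with vector-group fiber $C/(C\cap V)$, and uses Lemma \ref{lemdom} to write the action of $su$ as (action on the base $Z'$) times (linear action of $s$ composed with an honest fiberwise \emph{translation} by a cocycle $c_0(u,z')$). The unbounded part of the left $U$-translation is thereby confined, one central layer at a time, to a direction that is central (hence normal) and acts by genuine vector-space translations, which can only shrink intersections of symmetric convex sets (Lemma \ref{lemconset}, or equivalently your $D-D$ trick); the base is then handled by the induction hypothesis. A secondary point your proposal omits: the paper first treats the case where $S$ is a split torus (needed for the $S$-invariant adapted basis in Lemma \ref{lemdom}) and then reduces general reductive $S$ to this case via the Cartan decomposition $S=K_SA_SK_S$ and a $K_S$-invariant enlargement of $D$.
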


Here is the reformulation 
of Proposition \ref{prodom} that  will be used later on.
\begin{Def}
\label{defzousuh}
Let $Z_0$ be the  same space $Z_0= U/V$ as $Z$ 
but endowed with another $F$-action 
where $U$ acts trivially
and where $S$ acts by conjugation. 
\end{Def}

\begin{Cor}
\label{cordom}
Same notation as in Proposition \ref{prodom}.
Then for every compact subset $D\subset Z$, there exists a compact subset 
$D_0\subset Z_0$ such that for every $f\in F$, one has
\begin{equation}
\label{eqnvolfdd}
{\rm vol}(f D\cap D)\leq {\rm vol}(fD_0\cap D_0).
\end{equation}
\end{Cor}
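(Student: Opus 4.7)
The plan is to deduce Corollary \ref{cordom} directly from Proposition \ref{prodom} by unpacking how $F$ acts on $Z$ and on $Z_0$ via the Levi decomposition $F = SU$. The key observation is that, as measure spaces, both $Z = F/H$ and $Z_0$ share the same underlying set $U/V$ equipped with its $U$-invariant Radon measure; only the $F$-action differs.

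First I would make the two $F$-actions explicit. Since $S \subset H$, an element $s \in S$ sends the coset $uH \in F/H$ to $suH = (sus^{-1})sH = (sus^{-1})H$, so under the identification $F/H \simeq U/V$ the subgroup $S$ acts on $U/V$ by conjugation, while $u' \in U$ acts by left translation $uV \mapsto u'uV$. On $Z_0$, by definition, $U$ acts trivially and $S$ still acts by conjugation on $U/V$.

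Given a compact $D \subset Z$, I would apply Proposition \ref{prodom} to obtain a compact $D_0 \subset Z$ such that ${\rm vol}(suD \cap D) \leq {\rm vol}(sD_0 \cap D_0)$ for all $s \in S$ and $u \in U$. Reinterpreting this same $D_0$ as a compact subset of $Z_0$ (valid because $Z$ and $Z_0$ share the same underlying measure space) and writing any $f \in F$ uniquely as $f = su$, I would observe that the $Z_0$-action gives $fD_0 \cap D_0 = sD_0 \cap D_0$ (the factor $u$ acts trivially on $Z_0$), while on $Z$ the action gives $fD = suD$. Combining these yields
\[
{\rm vol}(fD \cap D) = {\rm vol}(suD \cap D) \leq {\rm vol}(sD_0 \cap D_0) = {\rm vol}(fD_0 \cap D_0),
\]
which is exactly \eqref{eqnvolfdd}.

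The only genuine content here lives in Proposition \ref{prodom}; the remainder is bookkeeping. The one point that must be checked carefully is that the induced $S$-action on $F/H$ coincides, under the identification $F/H \simeq U/V$, with conjugation on $U/V$, so that the right-hand side $sD_0 \cap D_0$ supplied by Proposition \ref{prodom} really does match the $Z_0$-intersection $fD_0 \cap D_0$ appearing in the corollary.
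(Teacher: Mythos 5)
Your proof is correct and is exactly the argument the paper has in mind: the authors state the corollary as an immediate ``reformulation'' of Proposition \ref{prodom} without writing out the details, and your bookkeeping (writing $f=su$, noting that $S$ acts on $F/H\simeq U/V$ by conjugation so that $sD_0\cap D_0$ means the same thing in $Z$ and in $Z_0$, while $u$ acts trivially on $Z_0$) is precisely the omitted verification.
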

The proof of Proposition \ref{prodom} is by induction on the dimension of $Z$. It relies only on geometric arguments and 
uses no representation theory.
\vs

Before studying the proof of Proposition \ref{prodom} 
the reader could as an exercise focus on the following very simple example where 
$Z=\m R^2$ is the affine $2$-plane and 
$F$ is the group of affine bijections
$\left(\begin{matrix}a&r\\ 0&b\end{matrix}\right)
\left(\begin{matrix}t\\ s\end{matrix}\right)\; $
that preserve
the horizontal foliation. In this case $S$ is the 
$2$-dimensional group 
of diagonal matrices and $U$ is the $3$-dimensional
Heisenberg group.
The proof for this example relies on the same ideas
while being very concrete.

\begin{proof}[Proof of Proposition \ref{prodom}]
{\bf First case: }$S$ is a split torus. 

We denote by $C$ the center of $U$ and $C_V=V\cap C$.
Let $W$ be the closed subgroup $W:=VC\subset U$. The projection
$$
Z=U/V\longrightarrow Z':=U/W
$$
is a principal bundle of group $C_W:=C/C_V=W/V$.
According to Lemma \ref{lemdom} below, there exists a continuous 
trivialization of this principal bundle
\begin{equation}
\label{eqnuvuwzz}
Z\simeq Z'\times C_W
\end{equation}
such that the action of $U$ and $S$ through this trivialization can be read as 
\begin{eqnarray}
\label{eqnuyzcuy}
su\,(z',c)&=& (su\, z'\, ,\, s\,c+s\,c_0(u,z'))
\end{eqnarray}
for all $s\in S$, $u\in U$, $z'\in Z'$, 
$c\in C_W$, 
where $c_0$ is a continuous cocycle $c_0:U\times Z'\rightarrow C_W$. We fix three compatible invariant measures 
${\rm vol}_Z$, ${\rm vol}_{Z'}$ and ${\rm vol}$ on $Z$, $Z'$, and $C_W$.

We start with a compact set $D\subset Z$.
Through the trivialization \eqref{eqnuvuwzz}, 
this set $D$ is included in a product 
of two compact sets $D'\subset Z'$ and $B\subset C_W$
\begin{equation*}
D\subset D'\times B\, ,
\end{equation*}
where $B$ is a symmetric convex set in the group $C_W$
seen as a real vector space. 
By the induction hypothesis,
there exists a compact set $D'_0\subset Z'$
which satisfies 
the bound \eqref{eqnvolsus} for $D'$, {\it{i.e.}} such that
\begin{equation}
\label{eqnvolfd2}
{\rm vol}_{Z'}(su D'\cap D')\leq {\rm vol}_{Z'}(sD'_0\cap D'_0)
\;\;\;\mbox{for all $s\in S$, $u\in U$}.
\end{equation}
We  compute using \eqref{eqnuyzcuy} 
and Lemma \ref{lemconset} below, 
for all $s\in S$ and $u\in U$,

\begin{eqnarray*}
{\rm vol}_Z(su D\cap D)
&\leq&
\int_{suD'\cap D'}{\rm vol}((sB+s\,c_0(u,(su)^{-1}z'))\cap B)\rmd z'\\
&\leq &
\int_{suD'\cap D'}{\rm vol}(sB\cap B)\rmd z'
\end{eqnarray*} 
where $\rmd z'$ also denotes the $U$-invariant measure on $Z'$.
Hence, using  \eqref{eqnvolfd2}, we go on
\begin{eqnarray*}
{\rm vol}_Z(su D\cap D)
&\leq&{\rm vol}_{Z'}(suD'\cap D')\;{\rm vol}(sB\cap B)\\
&\leq &
{\rm vol}_{Z'}(sD_0'\cap D_0')\; {\rm vol}(sB\cap B)\\
&= &
{\rm vol}_{Z}(sD_0\cap D_0),
\end{eqnarray*} 
where $D_0$ is the compact subset of $Z$
given by  $D_0:=D'_0\times B$.
\vs

{\bf Second case: }$S$ is a reductive group.

This general case will be deduced from the first case.
Indeed any reductive group admits a Cartan decomposition
$S=K_{_S}A_{_S}K_{_S}$ where $K_{_S}$ is a maximal compact subgroup 
of $S$ and where $A_{_S}$ is a maximal split torus of $S$.
We start with a compact set $D$ of $Z$.
According to the first case, 
there exists a $K_{_S}$-invariant compact set $D_0\subset Z$
such that, for all $a\in A_{_S}$ and  $u\in U$, one has
\begin{equation*}
\label{eqnvolfd3}
{\rm vol}(au K_{_S}D\cap K_{_S}D)\leq {\rm vol}(aD_0\cap D_0).
\end{equation*}
Therefore, for all $s$ in $S$ and $u$ in $U$, writing
$s=k_1ak_2$ with 
$k_1$, $k_2$ in $K_{_S}$ and $a$ in $A_{_S}$, one has 
\begin{eqnarray*}
\label{eqnvolfd4}
{\rm vol}(su D\cap D)
&\leq& 
{\rm vol}(a(k_2uk_2^{-1})k_2D\cap k_1^{-1}D)\\
&\leq &
{\rm vol}(aD_0\cap D_0)\\
&= &
{\rm vol}(sD_0\cap D_0),
\end{eqnarray*}
as required.
\end{proof}
In the proof of Proposition \ref{prodom}, we have used the following
two lemmas.

\begin{Lem}
\label{lemdom}
Let $U$ be a unipotent group, $V\subset U$ a unipotent subgroup, $C$ be the center of $U$, $W:=VC$ and  
$C_V:=C\cap V$.
Let $S\subset {\rm Aut}(U)$ be a split torus 
which preserves $V$.
Then 
there exists a continuous 
trivialization of the $U$-equivariant principal bundle $U/V\rightarrow U/W$ with structure group $C/C_V$
\begin{equation*}
\label{eqnuvuwzz2}
U/V\simeq U/W\times C/C_V
\end{equation*}
such that the action of $U$ and $S$ through this trivialization can be read as 
\begin{eqnarray*}
\label{eqnuyzcuy2}
su\,(y,c)&=& (su\, y\, ,\, s\,c+s\,c_0(u,y))
\end{eqnarray*}
for all $u\in U$, $s\in S$, $y\in U/W$, 
$c\in C/C_V$, 
where $c_0$ is a continuous cocycle $c_0\colon U\times U/W\rightarrow C/C_V$.
\end{Lem}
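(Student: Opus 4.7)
The plan is to reduce everything to producing a continuous, $S$-equivariant section $\sigma\colon U/W\to U/V$ of the projection $\pi\colon U/V\to U/W$. Given such a $\sigma$, the fact that $C$ is central in $U$ makes left multiplication by $C$ on $U/V$ a well-defined action that factors through the vector group $C/C_V$, and this action is simply transitive on each fibre of $\pi$. Consequently
\[
\Psi\colon U/W\times C/C_V\longrightarrow U/V,\qquad (y,c)\longmapsto c\cdot\sigma(y)
\]
is a continuous bijection, and by invariance of domain a homeomorphism; this will be the desired trivialization.

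To build $\sigma$, I would use that $S$, being a split torus, acts semisimply on $\g u$ and preserves the subspace $\g w=\g v+\g c$; hence an $S$-stable vector-space complement $\g m$ of $\g w$ in $\g u$ exists. Baker--Campbell--Hausdorff applied in a basis adapted to the descending central series of $\g u$ then shows that the polynomial map $(X,Y)\mapsto\exp(X)\exp(Y)$ from $\g m\times\g w$ to $U$ is a global diffeomorphism (its derivative at the origin is the sum map, and the correction terms strictly lower the filtration degree), so its composition with $U\to U/W$ is an $S$-equivariant homeomorphism $\g m\xrightarrow{\sim}U/W$. Setting $\sigma(\exp(X)W):=\exp(X)V$ for $X\in\g m$ then gives a well-defined continuous section of $\pi$ which is $S$-equivariant, since $S$ preserves $\g m$ and intertwines $\exp$ with its action on $U$.

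It then remains to read off the two actions through $\Psi$. For $s\in S$, $S$-equivariance of $\sigma$ together with the fact that $S$ acts on $U$ by automorphisms and preserves $C$ yields $s\cdot(c\,\sigma(y))=(sc)\cdot\sigma(sy)$, so $s$ acts as $(y,c)\mapsto(sy,sc)$ with no cocycle---this is the whole point of arranging the $S$-equivariance. For $u\in U$, the point $u\sigma(y)$ lies above $uy$, so it equals $c_0(u,y)\cdot\sigma(uy)$ for a unique continuous $c_0(u,y)\in C/C_V$, and the cocycle identity for $c_0$ follows immediately from associativity of the $U$-action on $\sigma$. Centrality of $C$ then gives $u\cdot(c\,\sigma(y))=(c+c_0(u,y))\cdot\sigma(uy)$, whence $u(y,c)=(uy,c+c_0(u,y))$; composing with the $S$-formula produces $su(y,c)=(suy,\,sc+s\,c_0(u,y))$, as required. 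The only genuine step is the construction of the $S$-equivariant section $\sigma$; everything else is formal bookkeeping.
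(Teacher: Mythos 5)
Your overall strategy --- produce a continuous $S$-equivariant section $\sigma$ of $U/V\to U/W$, use centrality of $C$ to make each fibre a torsor under $C/C_V$, and read off the cocycle from $u\,\sigma(y)=c_0(u,y)\cdot\sigma(uy)$ --- is the same as the paper's, and the ``formal bookkeeping'' at the end is fine. The gap is in the one step you yourself call the only genuine one: the construction of $\sigma$. You take an \emph{arbitrary} $S$-stable linear complement $\g m$ of $\g w$ in $\g u$ and assert that $(X,Y)\mapsto\exp(X)\exp(Y)$ is a global diffeomorphism of $\g m\times\g w$ onto $U$, so that $X\mapsto\exp(X)W$ identifies $\g m$ with $U/W$. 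This is false for a general complement, and your justification (nondegenerate differential at the origin plus ``correction terms lower the filtration degree'') does not repair it: the BCH correction terms do lie in $[\g u,\g u]$, but to run the resulting fixed-point/induction argument you must project them back onto $\g m$ and $\g w$, and these projections are compatible with the descending central series only if $\g u^i=(\g u^i\cap\g m)\oplus(\g u^i\cap\g w)$ for every $i$ --- a condition an arbitrary $S$-stable complement need not satisfy (e.g.\ when $S$ is trivial, every complement is $S$-stable).

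Concretely, let $\g u$ have basis $X,Y,Z,T$ with $[X,Y]=Z$, $[X,Z]=T$ and all other brackets zero, take $\g v=\m R Y$, so that $\g c=\m R T$, $\g w=\m R Y\oplus\m R T$, $C_V=\{e\}$, and take $S=\{1\}$. The complement $\g m=\operatorname{span}(X,\,Y+Z)$ is $S$-stable, yet for $M=aX+b(Y+Z)$ and $M'=aX+b'(Y+Z)$ a direct Baker--Campbell--Hausdorff computation gives
\[
\log\bigl(\exp(-M')\exp(M)\bigr)\in\g w
\;\Longleftrightarrow\;
(b-b')\bigl(1-\tfrac a2\bigr)=0,
\]
so all the points $\exp\bigl(2X+b(Y+Z)\bigr)$, $b\in\m R$, lie in a single coset of $W$: the map $\g m\to U/W$ is not injective and your $\sigma$ is not defined. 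The cure is exactly the content of the paper's proof: one fixes a basis $(e_i)$ of $\g u$ such that each $\operatorname{span}(e_j: j\ge i)$ is an ideal, each line $\m R e_i$ is $S$-stable, and $\g v$, $\g c$, $\g w$ are spanned by sub-families of the $e_i$, and then uses coordinates of the second kind $\prod_i\exp(t_ie_i)$ (Chevalley--Rosenlicht/Malcev); the $S$-equivariant section is the restriction of these coordinates to the indices complementary to those spanning $\g w$. If you replace ``any $S$-stable complement'' by a complement built degree by degree along such a flag of $S$-stable ideals, your argument closes and essentially coincides with the paper's.
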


\begin{proof}[Proof of Lemma \ref{lemdom}]
These claims are a variation of a classical result of 
Chevalley--Rosenlicht (see for instance \cite[Thm.~3.1.4]{CoGr}).
The proof relies on the existence of ``an adapted basis in a nilpotent
Lie algebra''. 
Here is a sketch of proof of these claims.

As usual, let $\g u$, $\g v$, $\g c$ and $\g w$
be the Lie algebras of the groups $U$, $V$, $C$ and $W$.
Let $I$ be the ordered set  $I=\{1,\ldots,n\}$, where $n=\dim \g u$. We fix a basis $(e_i)_{i\in I}$ of
$\g u$, such that\\
- for every $i\geq 1$, the 
vector space spanned by the $e_j$ for $j\geq i$ is an ideal;\\
- for every $i\geq 1$, the line $\m R e_i$ is invariant by $S$;\\
- there exists a subset $I_V\subset I$ such that 
$\g v$ is spanned by  $e_i$ for $i\in I_V$; \\
- there exists a subset $I_C\subset I$ such that 
$\g c$ is spanned by  $e_i$ for $i\in I_C$; \\
- the Lie algebra
$\g w$ is spanned by  $e_i$ for $i\in I_W:= I_C\cup I_V$. 

Then, the map
\begin{eqnarray*}
\Psi\;\colon\;\;\;\m R^I
\longrightarrow
U
\quad
(t_i)_{i\in I}\mapsto 
\textstyle\prod _{i\in I}\exp(t_ie_i),
\end{eqnarray*} 
where the product is performed using the order on $I$,
is a diffeomorphism
and one has 
$$
\Psi(\m R^{I_V})=V
\;\; ,
\;\;
\Psi(\m R^{I_C})=C
\;\;{\rm and}\;\; 
\Psi(\m R^{I_W})=W.
$$
Setting $J_V:= I\smallsetminus I_V$
and  $J_W:= I\smallsetminus I_W$,
the map 
\begin{eqnarray*}
\Psi_V\;\colon\;\;\;\m R^{J_V}
\longrightarrow
U/V, 
\quad
(t_i)_{i\in J_V} \mapsto 
\textstyle\prod _{i\in J_V}\exp(t_ie_i)V
\end{eqnarray*} 
is also a diffeomorphism and the restriction of this map to the subset
$\m R^{J_W}$ gives an $S$-equivariant section 
of the bundle $U/V\rightarrow U/W$.
\end{proof}

Here is the second basic lemma  used in the proof
of Proposition \ref{prodom}.

\begin{Lem}
\label{lemconset}
Let $B$, $B'$ be two symmetric convex sets of $\m R^d$,
then one has
$$
{\rm vol}((B+v)\cap B'))\leq {\rm vol} (B\cap B')
\;\;\;\mbox{\rm for all $v\in \m R^d$}.
$$
\end{Lem}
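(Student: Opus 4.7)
The plan is to combine the hypothesis that $B$ and $B'$ are both symmetric with the Brunn--Minkowski inequality, after a simple midpoint inclusion.

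First, I would record that the sets $A_+ := (B+v)\cap B'$ and $A_- := (B-v)\cap B'$ have the same volume. Indeed, the map $x\mapsto -x$ sends $A_+$ bijectively onto $(-B-v)\cap(-B')$, which equals $(B-v)\cap B' = A_-$ thanks to the symmetries $B=-B$ and $B'=-B'$.

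The key geometric step is the inclusion
$$\tfrac12 A_+ + \tfrac12 A_- \;\subseteq\; B\cap B'.$$
If $x\in A_+$ and $y\in A_-$, then $\tfrac12(x+y)$ is a midpoint of a point of $B+v$ and a point of $B-v$, so by convexity of $B$ it lies in $\tfrac12((B+v)+(B-v)) = B$; it is simultaneously a midpoint of two points of $B'$, hence lies in $B'$ by convexity of $B'$.

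Finally, since $A_+$ and $A_-$ are convex (as intersections of convex sets), I would apply the Brunn--Minkowski inequality to obtain
$${\rm vol}(B\cap B')^{1/d} \;\geq\; {\rm vol}\bigl(\tfrac12 A_+ + \tfrac12 A_-\bigr)^{1/d} \;\geq\; \tfrac12{\rm vol}(A_+)^{1/d} + \tfrac12{\rm vol}(A_-)^{1/d} \;=\; {\rm vol}(A_+)^{1/d},$$
where the last equality uses ${\rm vol}(A_+) = {\rm vol}(A_-)$. This yields the claimed bound. There is no serious obstacle; the only subtlety worth flagging is that both ingredients are genuinely needed, namely the symmetrization producing $A_\pm$ of equal volume and the midpoint-set inclusion, in order to turn the two-set Brunn--Minkowski inequality into the desired one-sided estimate.
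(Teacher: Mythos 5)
Your proof is correct and rests on exactly the same ingredients as the paper's: the Brunn--Minkowski inequality combined with the central symmetry of $B$ and $B'$. The paper simply cites the concavity of $v\mapsto{\rm vol}((B+v)\cap B')^{1/d}$ and notes that an even concave function is maximized at $v=0$, whereas you unpack the single midpoint-concavity instance $f(0)\geq\tfrac12 f(v)+\tfrac12 f(-v)$ directly; this is the same argument made self-contained.
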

\begin{proof}
By the Brunn--Minkowski inequality (see \cite[Sec.~11]{BoFe}),
the map 
$v\mapsto {\rm vol}((B+v)\cap B')^{1/d}$ is concave on the convex set $B'-B$ and hence achieves its maximum value 
at $v=0$.
\end{proof}

\subsection{Matrix coefficients of induced representations}
\label{seccoeind}
\bq
We now explain how to control 
the volume of the intersection of 
translates of compact sets in the $G$-space $X=G/H$
with those in $X_0:=G\times_F Z_0$.    
\eq

\begin{Prop}
\label{provolgcc}
Let $G$ be an algebraic semisimple Lie group
and $H$ a Zariski connected  algebraic subgroup
such that $\g h=[\g h,\g h]$.
Let $P=LU$, $F=SU$ and $H=SV$ 
be the groups introduced in Definition $\ref{defparsub}$.
Let $Z_0$ be the $F$-space  introduced in Definition 
$\ref{defzousuh}$ and $X_0$ the $G$-space $X_0:=G\times_FZ_0$. 
Then, for every compact subset $C\subset G/H$, there exists a compact subset 
$C_0\subset X_0$ such that 
\begin{equation}
\label{eqnvolcc02}
{\rm vol}(g\, C\cap C) \leq {\rm vol}(g\, C_0\cap C_0)
\;\;\;\; \mbox{\rm for all $g$ in $G$.}
\end{equation}
\end{Prop}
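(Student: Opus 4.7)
The strategy is to realize both $X=G/H$ and $X_0=G\times_F Z_0$ as measurable fiber bundles over the common base $G/F$, deploy a single $G$-equivariant Borel trivialization so that volumes on both spaces decompose by Fubini, and then invoke Corollary~\ref{cordom} fiberwise to pass from $Z$ to $Z_0$.

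Concretely, I fix a $G$-equivariant Borel trivialization $G\simeq G/F\times F$ of the principal $F$-bundle $G\to G/F$, with associated measurable cocycle $\sigma\colon G\times G/F\to F$, as in \eqref{eqngsighh}. Through this trivialization I identify measurably
\[
X\;\simeq\;G/F\times Z\qquad\text{and}\qquad X_0\;\simeq\;G/F\times Z_0,
\]
with $G$-action $g(y,z)=(gy,\sigma(g,y)z)$, where the fiber action is the given $F$-action on $Z$ (respectively $Z_0$). Since $G$ and $F=SU$ are both unimodular (the latter because $S$ is semisimple and $U$ is unipotent), the $G$-invariant measure on $G/F$ combined with a $U$-invariant Radon measure on the fiber yields a Radon measure on $X$ and on $X_0$ compatible with the $G$-actions, up to the implicit Radon--Nikodym density \eqref{eqnradnik} arising from the $S$-conjugation on $Z_0$.

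Given a compact $C\subset X$, let $\bar C\subset G/F$ be its (compact) image under projection and, using the fact that the trivialization sends relatively compact sets to relatively compact sets, choose a compact $D\subset Z$ containing every fiber $C_y:=\{z\in Z:(y,z)\in C\}$ with $y\in\bar C$. Apply Corollary~\ref{cordom} to $D$ to produce a compact $D_0\subset Z_0$ with
\[
{\rm vol}_Z(fD\cap D)\;\le\;{\rm vol}_{Z_0}(fD_0\cap D_0)\qquad\text{for every }f\in F,
\]
and define $C_0\subset X_0$ as the image of $\bar C\times D_0$ under the trivialization. Fubini on $X$ then gives
\[
{\rm vol}(gC\cap C)=\int_{G/F}{\rm vol}_Z\bigl(C_y\cap\sigma(g,g^{-1}y)\,C_{g^{-1}y}\bigr)\,d\nu(y)\le\int_{\bar C\cap g\bar C}{\rm vol}_Z(D\cap f D)\,d\nu(y),
\]
with $f=\sigma(g,g^{-1}y)$. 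The corollary applied termwise bounds this by the analogous integral with $D_0$ in place of $D$, which, by Fubini on $X_0$ applied to $C_0=\bar C\times D_0$, is exactly ${\rm vol}(gC_0\cap C_0)$, yielding \eqref{eqnvolcc02}.

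The main obstacle is the careful bookkeeping of measures and cocycles: one must check that the same Borel trivialization produces matching Fubini decompositions on $X$ and on $X_0$ with a common base measure $d\nu$, and that the Radon--Nikodym density coming from the $S$-conjugation action on $Z_0$ is absorbed consistently on both sides of \eqref{eqnvolcc02}. The uniformity required for the fiberwise step -- namely, that a single $D_0$ works for \emph{every} $f\in F$ simultaneously -- is precisely the content of Corollary~\ref{cordom}, which is why that preparatory result was proved in the sharp form it was.
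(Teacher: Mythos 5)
Your argument is correct and follows essentially the same route as the paper's own proof: both fix a Borel trivialization of the principal $F$-bundle $G\to G/F$ sending relatively compact sets to relatively compact sets, enclose $C$ in a product $\bar C\times D$, apply Corollary~\ref{cordom} to get a single $D_0$ working for all $f\in F$, and conclude by a Fubini computation over the base with $C_0=\bar C\times D_0$. (The Radon--Nikodym issue you flag is in fact vacuous here: since $S$ is semisimple, its conjugation action on $\mathfrak u/\mathfrak v$ has trivial determinant character, so the $U$-invariant measure on the fiber is genuinely $F$-invariant.)
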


In Proposition \ref{provolgcc} the assumption $\g h=[\g h,\g h]$ can be removed but the conclusion \eqref{eqnvolcc02}
becomes slightly 
more technical when there is no $G$-invariant measure on $G/H$. Indeed, when $\g h\neq [\g h,\g h]$, one has to replace 
the bound \eqref{eqnvolcc02} by a  
bound of $K$-finite matrix coefficients of the induced representation $\Pi=\Ind_F^G(L^2(F/H))$ thanks to $K$-finite matrix  coefficients of the induced representation $\Pi_0=\Ind_F^G(L^2(Z_0))$.

\begin{proof}[Proof of Proposition \ref{provolgcc}]
The projection 
$$
G\rightarrow X':=G/F
$$
is a $G$-equivariant principal bundle with structure group $F$. 
As in Section \ref{secregind}, we fix
a Borel measurable trivialization of this principal bundle 
\begin{equation}
\label{eqngsigh2}
G\simeq X'\times F
\end{equation}
which sends relatively compact subsets to relatively compact subsets.
The action of $G$ by left multiplication through this trivialization can be read as 
\begin{equation*}
\label{eqngxhgx2}
g\,(x',f)=(gx',\si_F(g,x')f)
\;\;\;\;\mbox{\rm 
for all $g\in G$, $x'\in X'$ and $f\in F$,}
\end{equation*}
where $\si_F\colon G\times X'\rightarrow F$ 
is a Borel  measurable cocycle. 
This trivialization \eqref{eqngsigh2}
induces a trivialization of the associated bundles
\begin{eqnarray*}
\label{eqngsigh3}
X= G\times _F Z    
&\simeq&  X' \times Z\, , \\    
X_0 = G \times_F Z_0  
&\simeq &                 
X' \times Z_0\, .
\end{eqnarray*}
We start with a compact set $C$ of $X$.
Through the first trivialization, 
this compact set is included in a product 
of two compact sets $C'\subset X'$ and $D\subset Z$
\begin{equation}
\label{eqncsumxd}
C\subset C'\times D\; .
\end{equation}
We denote by $D_0\subset Z_0$ the compact set given
by Corollary \ref{cordom} 
and we compute using \eqref{eqnvolfdd}, 
for $g$ in $G$,
\begin{eqnarray*}
{\rm vol}_X(g\, C\cap C)
&\leq&
\int_{gC'\cap C'}{\rm vol}_Z(\si_F(g,g^{-1} x')D\cap D)\rmd x'\\
&\leq &
\int_{gC'\cap C'}{\rm vol}_{Z_0}(\si_F(g,g^{-1} x')D_0\cap D_0)\rmd x'\\
&\leq &
{\rm vol}_{X_0}(g\, C_0\cap C_0),
\end{eqnarray*} 
where $\rmd x'$ is a $G$-invariant measure on $X'$
and $C_0$ is a compact subset of $X_0\simeq X'\times Z_0$
which contains $C'\times D_0$.
\end{proof}

\subsection{Proof of the temperedness criterion}
\label{secsemgro}

\bq
We conclude the proof of Theorem \ref{thmlghtem}.
\eq

\begin{proof}[Proof of the converse implication in Theorem \ref{thmlghtem}] We prove it by induction on the dimension of $G$.
By Remarks
\ref{remredalg} and \ref{remreduni}, 
we can assume that  $G$ is a Zariski connected semisimple algebraic group and that 
$H$ is a Zariski connected algebraic subgroup such that $\g h=[\g h,\g h]$.
Let 
$$
H=SV\subset F=SU\subset P=LU \subset G
$$
be the groups introduced in Definition $\ref{defparsub}$.
Let $Z_0=U/V$ be the $F$-space introduced in Definition 
$\ref{defzousuh}$ and $X_0$ be the $G$-space $X_0=G\times_FZ_0$. 

When $P$ is equal to $G$, the group $H$ is semisimple and we apply \cite[Thm 3.1]{BeKoI}. We now assume that $P$ is a proper parabolic subgroup of $G$. 

By assumption one has $\rho_\gs h\leq \rho_{\gs g/\gs h}$ on $\g h$. 
Therefore, by Lemma \ref{lemparsub}, 
one has the inequality on $\g s$
\begin{equation}
\label{eqnparsub2}
\rho_{\gs s}\leq \rho_{\gs l/\gs s}+2\, \rho_{\gs u/\gs v}. 
\end{equation}

We introduce the regular representation $\pi_0$ of $P$ in 
$L^2(P\times_FZ_0)$
 which is unitarily equivalent 
 to $\Ind_F^P(L^2({\mathfrak{u}}/{\mathfrak{v}}))$
 by the isomorphism \eqref{eqnrexind}.
As a representation of $L$, one has 
$$
\pi_0|_L=\Ind_S^L(L^2(\g u/\g v)).
$$
Using our induction hypothesis
on the dimension of G
to the derived subgroup of $L$,
Proposition \ref{proequthm} and Remark \ref{remtemame} 
tell us 
that the representation $\pi_0$ is $L$-tempered
 by \eqref{eqnrexind}.
Therefore, by Lemma \ref{lemparsub2},  
the representation $\pi_0$ is $P$-tempered. 
The regular representation  $\Pi_0$ of $G$ in $L^2(G\times_FZ_0)$ 
is unitarily equivalent to $\Pi_0 =\Ind_P^G(\pi_0)$ 
because
$
\Pi_0 \simeq \Ind_P^G (\Ind_F^P (L^2(\g u/\g v))).
$
Now, Lemma \ref{lemindtem} implies that 
this representation $\Pi_0$ is $G$-tempered.
Therefore, by Corollary \ref{coralpcol},
for any $K$-invariant
compact subset $C_0$ of $G\times_FZ_0$, 
one has a bound:
\begin{equation*}
{\rm vol}(g\, C_0\cap C_0) \leq {\rm vol}( C_0)\; \Xi(g) 
\;\;\;\; \mbox{\rm for all $g$ in $G$.}
\end{equation*}
Hence, by Proposition \ref{provolgcc}, 
for any 
compact subset $C$ of $G/H$, 
one also has a bound
\begin{equation*}
{\rm vol}(g\, C\cap C) \leq M_C\; \Xi(g)
\;\;\;\; \mbox{\rm for all $g$ in $G$.}
\end{equation*}
Again by Corollary \ref{coralpcol}, this tells us 
that the representation of $G$ in $L^2(G/H)$ 
is $G$-tempered.
\end{proof}

\section{Examples}
\label{secexalgh}

The criterion given in Theorem \ref{thmlghtem}  allows to easily detect for a given homogeneous space $G/H$ whether 
the unitary representation of a semisimple Lie group $G$ in $L^2(G/H)$ is tempered or not. 
We collect in this chapter a few examples,
omitting the details of the computational verifications.

\subsection{Examples of tempered homogeneous spaces}
\label{secredhom}

\bq
We first recall a few examples  
extracted from \cite{BeKoI} where $H$ is reductive.
\eq

\begin{Exa}
\label{exagslopq}
$L^2(SL(p+q,\mathbb{R})/SO(p,q))$ is always tempered.\\
$L^2(SL(2m,\mathbb{R})/Sp(m,\m R))$ is never tempered.\\
$L^2(SL(m+n,\mathbb{C})/SL(m,\mathbb{C})\times SL(n,\mathbb{C}))$ 
is tempered iff $|m-n|\leq 1$.\\
$L^2(SO(m+n,\mathbb{C})/SO(m,\m C))\times SO(n,\m C))$ is tempered iff
$|m-n|\leq 2$.\\
$L^2(Sp(m+n,\mathbb{C})/Sp(m,\mathbb{C})\times Sp(n,\mathbb{C}))$ is tempered iff $m=n$.
\end{Exa} 

\begin{Exa}
\label{exaslprod}
Let $n=n_1+\cdots + n_r$ with $n_1\geq\cdots\geq n_r\geq 1$, $r\geq 2$.\\
$L^2(SL(n,\m R)/\prod SL(n_i,\m R))$ is tempered iff 
$2n_1\leq n+1$.\\
$L^2(Sp(n,\m R)/\prod Sp(n_i,\m R))$ is tempered iff
$2n_1\leq n$.\\
Let $p=p_1+p_2$, $q=q_1+q_2$ with $p_1$, $p_2$, $q_1$, $q_2$ $\geq 1$.\\
$L^2(SO(p,q)/ SO(p_1,q_1)\times SO(p_2,q_2))$ is tempered iff
$|p_1+q_1-p_2-q_2|\leq 2$.
\end{Exa}

\begin{Exa}
\label{exagxg}
Let $G $ be an algebraic semisimple Lie group and 
$K $ a maximal compact subgroup.
$L^2(G_{\m C}/K_{\m C})$ is $G_{\m C}$-tempered iff $G $ is quasisplit.
\end{Exa}

\begin{Rem}
A  way to justify this last example is to
notice that our criterion 
$2\, \rho_{\g k_\m C}\leq \rho_{\g g_\m C}$
means that 
{\it the trivial $K$-type is a small $K$-type of $G$}
in the terminology of Vogan's paper 
\cite[Def. 6.1]{Vog79}, see also in Knapp's book
\cite[Chap.~XV]{Kn01}, and to use 
the following equivalences due to Vogan in the same paper
\cite[Thm.~6.4]{Vog79}:
$$
\mbox{
$G$ has a small $K$-type 
$\Longleftrightarrow$
the trivial $K$-type is small
$\Longleftrightarrow$
$G$ is quasisplit.}
$$
\end{Rem}

Here is a delicate example for semisimple symmetric spaces.
\begin{Exa}
\label{exSp} Let $G/H:=Sp(2,1)/Sp(1) \times Sp(1,1)$.
The Plancherel formmula \cite{Del98, Osh88} tells that 
both the continuous part and  a ``generic portion" of the discrete part
of $L^2(G/H)$ are tempered, however, 
 our criterion \eqref{eqnlghtem} tells that $L^2(G/H)$ is non-tempered
because $\rho_{\mathfrak h}(Y) = \frac{3}{2} \rho_{\mathfrak q}(Y) >\rho_{\mathfrak q}(Y)$
 if $Y$ is a nonzero hyperbolic element of $\mathfrak h$.
In fact, the discrete part of $L^2(G/H)$ 
consist of Harish-Chandra's discrete series representations, say $\pi_n$ ($n=1,2,...$),
and two more non-vanishing representations $\pi_{0}$ and $\pi_{-1}$
in the coherent family, 
 where $\pi_0$ is still tempered but $\pi_{-1}$ is non-tempered (\cite[Thm.~1]{kob92}).
\end{Exa}

Here is another direct application of our criterion \eqref{eqnlghtem} where $H$ is not anymore assumed to be reductive.

\begin{Cor} 
\label{corgh}
Let $G$ be an algebraic  semisimple Lie group,
and
$H$ an algebraic  subgroup.\\
$1)$ If the representation of $G_{\mathbb{C}}$ in
$L^2(G_{\mathbb{C}}/H_{\mathbb{C}})$ is tempered,
then the representation of $G$ in $L^2(G/H)$ is tempered.\\
$2)$ The converse is true under if $H$ contains a maximal torus 
which is split.
\end{Cor}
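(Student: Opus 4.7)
The plan is to apply the temperedness criterion (Theorem \ref{thmlghtem}) on both sides and bridge the two $\rho$-inequalities by a direct ``factor of two'' identity relating the real and complex versions.

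Concretely, for any finite-dimensional real $\g h$-module $V$, view $V_\m C:=V\otimes_\m R\m C$ as a real $\g h_\m C$-module through the $\m C$-linear extension of the action. Then for every $Y\in\g h$,
\begin{equation*}
\rho_{V_\m C}(Y)\;=\;2\,\rho_V(Y).
\end{equation*}
Indeed, if $\mu_1,\ldots,\mu_n$ are the eigenvalues of $Y$ on $V_\m C$ as a complex vector space, then the $\m R$-linear extension of $Y$ to $V_\m C\otimes_\m R\m C$ has eigenvalues $\mu_1,\bar\mu_1,\ldots,\mu_n,\bar\mu_n$, doubling the sum of $|\Re(\mu_i)|$. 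Specialising to $V=\g h$ and $V=\g g/\g h$ gives $\rho_{\g h_\m C}=2\rho_\g h$ and $\rho_{\g g_\m C/\g h_\m C}=2\rho_{\g g/\g h}$ on $\g h$.

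For part $(1)$, Theorem \ref{thmlghtem} applied to $H_\m C\subset G_\m C$ (as real algebraic groups) gives $\rho_{\g h_\m C}(Y)\le\rho_{\g g_\m C/\g h_\m C}(Y)$ for all $Y\in\g h_\m C$; restricting to $\g h$ and dividing by $2$ yields $\rho_\g h\le\rho_{\g g/\g h}$ on $\g h$, so $L^2(G/H)$ is tempered by Theorem \ref{thmlghtem}.

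For part $(2)$, assume $L^2(G/H)$ is tempered and let $\g a\subset\g h$ be the Lie algebra of an $\m R$-split maximal torus of $H$. I claim that $\g a$ is a maximal $\m R$-split abelian subalgebra of $\g h_\m C$. Indeed, for $Y\in\g a$ the eigenvalues of $\mathrm{ad}(Y)$ on $\g h_\m C$ (viewed as complex Lie algebra) are the values of the real roots and hence real, so $\mathrm{ad}(Y)$ is $\m R$-diagonalisable on $\g h_\m C$ viewed as a real vector space, showing that $\g a$ is $\m R$-split in $\g h_\m C$; moreover $\dim_\m R\g a=\mathrm{rank}_\m R H=\mathrm{rank}_\m C\g h_\m C$ matches the $\m R$-split rank of the real algebraic group underlying $H_\m C$ (each complex factor $\m C^*$ contributes $\m R$-split rank~$1$). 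By the remark following Theorem \ref{thmlghtem} it suffices to verify $\rho_{\g h_\m C}\le\rho_{\g g_\m C/\g h_\m C}$ on the single subalgebra $\g a$; there, the hypothesis $\rho_\g h\le\rho_{\g g/\g h}$ combined with the factor-of-two identity closes the argument. The only substantive work is the factor-of-two identity and the check that $\g a$ is maximal $\m R$-split in $\g h_\m C$; both are elementary linear algebra, so no serious obstacle arises.
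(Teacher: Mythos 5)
Your proof is correct and follows exactly the route the paper intends: Corollary \ref{corgh} is stated there as a ``direct application'' of the criterion of Theorem \ref{thmlghtem} with the verification omitted, and that verification is precisely your factor-of-two identity $\rho_{V_{\mathbb C}}=2\rho_V$ on $\g h$ together with the observation that a split maximal torus of $H$ remains a maximal split torus of $H_{\mathbb C}$ viewed as a real algebraic group. Both steps check out.
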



\subsection{Subgroups of $SL(n,\m R)$}
\label{secnonred}
\bq
We now explain how to check our criterion \eqref{eqnlghtem}
on a very concrete example.
\eq

In Table 1, we specify our criterion \eqref{eqnlghtem} when
$
G= {\rm SL}(\m R^p\oplus\m R^q) 
$
and  $H$ is a  subgroup of $G$  
normalized by the 
group ${\rm SL}(\m R^p)\times {\rm SL}(\m R^q) $.
\begin{figure}[htb]
\begin{center}
\begin{tabular}{|c|c|c|c|}
\hline
$H_1\colon\left(\begin{matrix}*&0\\ 0&I\end{matrix}\right)$&
$H_2\colon\left(\begin{matrix}*&*\\ 0&I\end{matrix}\right)$&
$H_3\colon\left(\begin{matrix}*&*\\ 0&*\end{matrix}\right)$&
$H_4\colon\left(\begin{matrix}*&0\\ 0&*\end{matrix}\right)$\\
&&&\\
$p\leq q+1$&
$p= 1$&
$p=q=1$&
$p\leq q+1$\\
&&&
$q\leq p+1$\\
\hline
\end{tabular}
\end{center}
\vspace{-1em}
{\caption{Table 1:\;
The criterion $\rho_{\gs h}\leq\rho_{\gs g/\gs h}$
when $G= {\rm SL}(p+q,\m R)$}}
\label{figslpq}
\end{figure}

In Table 2, we specify our criterion \eqref{eqnlghtem} when
$
G= {\rm SL}(\m R^p\oplus\m R^q\oplus\m R^r) 
$
and  $H$ is a  subgroup of $G$  
normalized by the 
group ${\rm SL}(\m R^p)\times {\rm SL}(\m R^q) \times {\rm SL}(\m R^r) $.
Note that in these two tables, the center of the diagonal blocks is not important by Corollary \ref{corgh1h2}. 
\begin{figure}[htb]
\begin{center}
\begin{tabular}{|c|c|c|c|}
\hline
$H_1:\left(\begin{matrix}*&0&*\\ 0&I&0\\ 0&0&I\end{matrix}\right)$&
$H_2:\left(\begin{matrix}I&0&*\\ 0&*&0\\ 0&0&I\end{matrix}\right)$&
$H_3:\left(\begin{matrix}I&*&*\\ 0&*&0\\ 0&0&I\end{matrix}\right)$&
$H_4:\left(\begin{matrix}*&*&*\\ 0&*&*\\ 0&0&*\end{matrix}\right)$\\
&&&\\
$p\leq q+1$&
$q\leq p+r+1$&
$q\leq r+1$ &
$p= q=r=1$\\
\hline
\hline 
$H_5:\left(\begin{matrix}*&0&0\\ 0&*&0\\ 0&0&I\end{matrix}\right)$&
$H_6:\left(\begin{matrix}*&0&*\\ 0&*&0\\ 0&0&I\end{matrix}\right)$&
$H_7:\left(\begin{matrix}*&*&*\\ 0&*&0\\ 0&0&I\end{matrix}\right)$&
$H_8:\left(\begin{matrix}*&0&*\\ 0&I&0\\ 0&0&*\end{matrix}\right)$\\
&&&\\
$p\leq q+r+1$ &
$p\leq q+1$&
$p=1$&
$p\leq q+1$\\
$q\leq p+r+1$&
$q\leq p+r+1$&
$q\leq r+1$&
$r\leq q+1$\\
\hline
\hline
$H_9\!:\left(\begin{matrix}I&*&*\\ 0&*&0\\ 0&0&*\end{matrix}\right)$&
$H_{10}\!:\left(\begin{matrix}*&0&0\\ 0&*&0\\ 0&0&*\end{matrix}\right)$&
$H_{11}\!:\left(\begin{matrix}*&0&*\\ 0&*&0\\ 0&0&*\end{matrix}\right)$&
$H_{12}\!:\left(\begin{matrix}*&*&*\\ 0&*&0\\ 0&0&*\end{matrix}\right)$\\
&&&\\
$q\leq r+1$ &
$p\leq q+r+1$&
$p\leq q+1$&
$p=1$\\
$r\leq q+1$&
$q\leq p+r+1$&
$q\leq p+r+1$&
$q\leq r+1$\\
&
$r\leq p+q+1$&
$r\leq q+1$&
$r\leq q+1$\\
\hline
\end{tabular}
\end{center}
\vspace{-1em}
\caption{Table 2:\;
The criterion $\rho_{\gs h}\leq\rho_{\gs g/\gs h}$
when $G= {\rm SL}(p+q+r,\m R)$}
\label{figslpqr}
\end{figure}
\begin{Rem}
It is rather easy to guess the inequalities
in Table \ref{figslpqr}.
Here is the heuristic recipe: there is one inequality
for each non-identity diagonal block. 
The left-hand side of this inequality is given by the size
of this diagonal block, while
the right-hand side can be guessed by looking at 
the size of the zero blocks on the right and on the top of it. 
\end{Rem}

We will just explain the proof for the group $H=H_{11}$
in Table \ref{figslpqr}.
The other cases are  similar. 

\begin{Cor}
\label{corslpqr}
Let $G= {\rm SL}(p\!+\!q\!+\!r,\m R)$ and 
$ H$  the subgroup of matrices 
$\left(\begin{matrix}\al &0&z\\ 0&\be&0\\ 0&0&\ga\end{matrix}\right)$ 
with $\al\in {\rm GL}(p,\m R)$, 
$\be\in {\rm GL}(q,\m R)$, $\ga\in {\rm GL}(r,\m R)$,
$z\in {\rm M}(p,r;\m R)$.
Then $L^2(G/H)$ is $G$-tempered if and only if
$\;
p\leq q\!+\!1,\;
q\leq p\!+\!r\!+\!1,\;
r\leq q\!+\!1.
$
\end{Cor}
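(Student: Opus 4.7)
The plan is to apply Theorem~\ref{thmlghtem} directly. By Corollary~\ref{corgh1h2} I first replace $H$ by its derived subgroup $[H,H]=SL(p)\times SL(q)\times SL(r)\ltimes M(p,r;\m R)$, and by the remark after Theorem~\ref{thmlghtem} the inequality $\rho_\gs h\le\rho_{\gs g/\gs h}$ only needs to be checked on a maximal split abelian subalgebra $\g a\subset\g h$, which here consists of diagonal matrices $\mathrm{diag}(a_1,\dots,a_p;b_1,\dots,b_q;c_1,\dots,c_r)$ whose $a,b,c$-blocks each have zero trace.

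Using formula~\eqref{eqnrhovys}, I would enumerate the $\g a$-weights in the block decomposition of $\g g=\mathfrak{sl}(p{+}q{+}r)$. The $(I,K)$ block lies inside $\g h$, while the five remaining off-diagonal blocks $(I,J),(J,I),(J,K),(K,J),(K,I)$ span $\g g/\g h$. The contribution $\tfrac12\sum_{i\in I,k\in K}|a_i-c_k|$ appears symmetrically on both sides and cancels, leaving the clean combinatorial inequality
\begin{equation}\label{eqncleanineqx}
\sum_{\{i,j\}\subset I}|a_i{-}a_j|+\sum_{\{i,j\}\subset J}|b_i{-}b_j|+\sum_{\{i,j\}\subset K}|c_i{-}c_j|
\;\le\;\sum_{i\in I,j\in J}|a_i{-}b_j|+\sum_{j\in J,k\in K}|b_j{-}c_k|,
\end{equation}
to be verified for all trace-zero real triples $(a,b,c)$.

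For necessity I would plug into~\eqref{eqncleanineqx} the three trace-zero probe vectors $(a,b,c)=\bigl((1,{-}1,0,\dots,0),0,0\bigr)$, $\bigl(0,(1,{-}1,0,\dots,0),0\bigr)$, $\bigl(0,0,(1,{-}1,0,\dots,0)\bigr)$, which give respectively $2p-2\le2q$, $2q-2\le2(p+r)$, $2r-2\le2q$; these are precisely the three conditions $p\le q+1$, $q\le p+r+1$, $r\le q+1$.

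The hard direction is sufficiency, which I would establish via the linearisation $|s-t|=\int_\m R\mathbf 1_{\tau\text{ between }s,t}\,d\tau$. Setting $N_X(\tau):=\#\{i\in X:\text{$i$-th coordinate}\le\tau\}$, the difference $\mathrm{LHS}-\mathrm{RHS}$ of~\eqref{eqncleanineqx} becomes $\int_\m R\Phi\bigl(N_I(\tau),N_J(\tau),N_K(\tau)\bigr)d\tau$ for the explicit quadratic
\[
\Phi(x,y,z):=-x^2-y^2-z^2+2xy+2yz+(p{-}q)x+(q{-}p{-}r)y+(r{-}q)z.
\]
It therefore suffices to show $\Phi(x,y,z)\le0$ for every integer triple $(x,y,z)\in[0,p]\times[0,q]\times[0,r]$. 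The substitution $u=x-y$, $v=z-y$ decouples this as $y(y-q)+u(p-q-u)+v(r-q-v)\le0$. The main obstacle is that $\Phi$ is \emph{not} non-positive on the continuous box, so integrality of $(x,y,z)$ is essential: the integer maxima of the two $u,v$-summands are $\lfloor(q-p)^2/4\rfloor$ and $\lfloor(q-r)^2/4\rfloor$, attained only for $y$ in explicit integer intervals, and a short case analysis using the three hypotheses $p\le q+1$, $q\le p+r+1$, $r\le q+1$ will show that on the joint attainment locus the quantity $y(q-y)$ already dominates the sum of those two integer maxima. This completes the proof; the remaining rows of Table~\ref{figslpqr} are treated by the same recipe.
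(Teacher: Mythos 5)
Your reduction is sound and follows the paper up to the point where the combinatorial inequality must be verified: the passage to $[H,H]$, the restriction to the diagonal $\g a$, the cancellation of the $(I,K)$-block, and the three probe vectors for necessity all match (the paper uses $e_p$, $e_{p+q}$, $e_{p+q+r}$ after extending both sides to $\m R^{p+q+r}$ by translation-invariance, which is equivalent to your trace-zero probes). Where you genuinely diverge is in the sufficiency direction: the paper bounds $\rho_{\gs q}(Y)$ from below by replacing each $|x_i-y_j|$ with ${\rm sgn}(a_i-b_j)(x_i-y_j)$, where $a_i=2i-p-1$, $b_j=2j-q-1$ are the $\rho_{\gs h}$-coefficients, and then checks $\ell_i=a_i$, $n_k=c_k$ and $\sum_j(m_j-b_j)y_j\ge 0$ using the monotonicity of the $y_j$ and the antisymmetry $m_{q+1-j}=-m_j$. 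Your layer-cake linearisation reducing everything to the non-positivity of the quadratic $\Phi$ on the lattice points of the box is a legitimate and rather transparent alternative, and your computation of $\Phi$ and its decoupled form $y(y-q)+u(p-q-u)+v(r-q-v)$ is correct.

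However, the final step as written has a real gap. You propose to verify $\Phi\le 0$ only ``on the joint attainment locus'' of the two integer maxima $\lfloor(q-p)^2/4\rfloor$ and $\lfloor(q-r)^2/4\rfloor$. That does not suffice: outside that locus both summands drop below their maxima, but $y(q-y)$ drops as well, so no conclusion follows, and the extremal lattice points of $\Phi$ genuinely occur outside the joint attainment locus. Concretely, take $(p,q,r)=(1,12,10)$, which satisfies all three hypotheses (with $q=p+r+1$). The $u$-maximum $30$ is attained only for $y\in\{5,6,7\}$, and there $y(q-y)\ge 35>31$, so your check passes; but at $(x,y,z)=(0,1,0)$ one has $\Phi=-1+(q-p-r)=0$, i.e.\ equality holds at a point with $y=1$, far from the attainment locus. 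More generally $(0,1,0)$ is an equality case whenever $q=p+r+1$, so any correct argument must control \emph{every} $y$, bounding for each fixed $y$ the constrained maxima of $u(p-q-u)$ over $u\in[-y,p-y]$ and of $v(r-q-v)$ over $v\in[-y,r-y]$ (for small $y$ these are $y(q-p-y)$ and $y(q-r-y)$, whose sum is $\le y(q+1-2y)\le y(q-y)$ precisely by $q\le p+r+1$; the middle and large ranges of $y$ need separate estimates using $p\le q+1$ and $r\le q+1$). This is all doable and would salvage your proof, but it is a case analysis over all of $[0,q]$, not over the attainment locus, and as stated your argument does not establish the inequality.
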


\begin{proof}[Proof of Corollary \ref{corslpqr}]
We denote by $\g a$ the Lie algebra of diagonal matrices
$$
\g a=\{Y=(x,y,z)\in \m R^p\oplus\m R^q\oplus\m R^r\mid {\Tr}(Y)=0\}.
$$
We only need to check the criterion 
\eqref{eqnlghtem} on the chamber
$$
\g a_+=\{Y=(x,y,z)\in \g a
\mid \mbox{ $x$, $y$ and $z$ have non-decreasing coordinates}\}.
$$
We recall that $\g q=\g g/\g h$ and 
we compute for $Y\in \g a_+$,
$$
\rho_\gs h(Y)=\textstyle
\sum_{i=1}^pa_ix_i+
\sum_{j=1}^qb_jy_j+
\sum_{k=1}^rc_kz_k
\;,
$$
where $\;\;a_i:= 2i-p-1,\;\;
b_j:= 2j-q-1,\;\;
c_k:=2k-r-1$, \; and
$$\textstyle
\rho_\gs q(Y)=
\sum_{i,j}|x_i-y_j| +\sum_{j,k}|y_j-z_k|\; .
$$

Assume first that the criterion $\rho_\gs h \leq \rho_\gs q $ is satisfied on $\g a$. It is then also satisfied on $\m R^{p+q+r}$. 
Applying it successively to the three vectors $Y=e_p$, $Y=e_{p+q}$ and 
$Y=e_{p+q+r}$ of the standard basis $e_1,\ldots ,e_{p+q+r}$ of $\m R^{p+q+r}$, 
one gets successively the three inequalities
$\;
p\leq q\!+\!1,\;
q\leq p\!+\!r\!+\!1,\;
r\leq q\!+\!1.
$
\vs

Assume now that these three inequalities are satisfied.
Note that
\begin{eqnarray*}
\rho_\gs q(Y)\!\!&\geq&\!\! \textstyle
\sum_{a_i>b_j}\! (x_i\!-\! y_j) +
\sum_{b_j>a_i}\! (y_j\!-\! x_i)  +
\sum_{b_j>c_k}\! (y_j\!-\! z_k)  +
\sum_{c_k>b_j}\! (z_k\!-\! y_j)   \\
&=&\textstyle
\sum_{i=1}^p \ell_i x_i +
\sum_{j=1}^q m_j  y_j +
\sum_{k=1}^r n_k z_k, 
\;\; {\rm where} 
\end{eqnarray*}
\begin{eqnarray*}
\ell_i&=&
|\{j\mid b_j<a_i\}|-|\{j\mid b_j>a_i\}|\\
m_j&=&
|\{i\mid a_i<b_j\}|-|\{i\mid a_i>b_j\}| 
+|\{k\mid c_k<b_j\}|-|\{k\mid c_k>b_j\}|\\ 
n_k&=&
|\{j\mid b_j<c_k\}|-|\{j\mid b_j>c_k\}|.
\end{eqnarray*}

Since $p\leq q+1$, one has $\ell_i=a_i$ for all $1\leq i\leq p$.

Since $r\leq q+1$, one has $n_k=c_k$ for all $1\leq k\leq r$.

For $1\leq j\leq q$, one has
$m_{q+1-j}=-m_j$ and, when $j> q/2$,
$$
m_j= \operatorname{min}(b_j,p)+\operatorname{min}(b_j,r).
$$
Since $q\leq p+r+1$, one has 
$m_j\geq b_j$ for all $j> q/2$. 
Then using the fact that 
the $y_j$s are non-decreasing functions of $j$, one gets,
for $Y$ in $\g a_+$,
$$\textstyle
\rho_\gs q(Y)-\rho_{\gs h}(Y)=\sum_{j=1}^q(m_j-b_j)y_j\geq 0.
$$
This proves that the criterion $\rho_\gs h \leq \rho_\gs q $ is satisfied.
\end{proof}

Some of the subgroups in Table \ref{figslpqr} appear naturally
 in analyzing the tensor product representations
 of $SL(n,{\mathbb{R}})$ as below.  

\subsection{Tensor product of non-tempered representations}
\label{sectensor}

Suppose $\Pi$ and $\Pi'$ are unitary representations of $G$.  
The tensor product representation $\Pi \otimes \Pi'$ is tempered
 if $\Pi$ or $\Pi'$ is tempered.  
In contrast, $\Pi \otimes \Pi'$ may be and may not be tempered 
 when both $\Pi$ and $\Pi'$ are non-tempered.  

For instance, let $n=n_1 + \cdots + n_k$ be a partition,
 and we consider the (degenerate) principal series representation
 $\Pi_{n_1,\cdots,n_k}:=\Ind_{P_{n_1,\cdots,n_k}}^{G}({\bf{1}})$
 of $G=SL(n,{\mathbb{R}})$, 
 where $P_{n_1,\cdots,n_k}$ is the standard parabolic subgroup 
 with Levi subgroup $S(GL(n_1,{\mathbb{R}}) \times \cdots \times GL(n_k,{\mathbb{R}}))$.  
Then $\Pi_{n_1,\cdots,n_k}$ is tempered 
 iff $k=n$ and $n_1= \cdots = n_k=1$.  
Here are some examples of the temperedness criterioin \eqref{eqnlghtem}
 applied to the tensor product of two such representations.  
\begin{Prop}
Let $0 \le k, l \le n$ and $a+b+c =n$.  
\begin{enumerate}
\item[{\rm{(1)}}]
$\Pi_{k,n-k} \otimes \Pi_{n-l,l}$ is tempered
 iff $|k-l| \le 1$ and $|k+l-n| \le 1$.  
\item[{\rm{(2)}}]
$\Pi_{a,b,c} \otimes \Pi_{b+c,a}$ is tempered
 iff $\max(b,c) -1 \le a \le b+c+1$.  
\item[{\rm{(3)}}]
$\Pi_{a,b,c} \otimes \Pi_{c,b,a}$ is tempered
 iff $2 \max(a,b,c) \le n+1$. 
\end{enumerate}
\end{Prop}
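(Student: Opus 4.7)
The plan is to interpret each tensor product as a regular representation of $G=SL(n,\m R)$ on a homogeneous space and apply Theorem~\ref{thmlghtem}. The multiplicativity of Radon--Nikodym cocycles gives a $G$-equivariant isomorphism
\[
\Pi_{n_1,\dots,n_k}\otimes\Pi_{m_1,\dots,m_l}\simeq L^2(G/P_1\times G/P_2),
\]
with $G$ acting diagonally. By the Bruhat decomposition, $G/P_1\times G/P_2$ is a finite union of $G$-orbits; the complement of the unique open orbit has measure zero, so $L^2(G/P_1\times G/P_2)\simeq L^2(G/H)$, where $H$ is the stabilizer of a generic pair of flags (cf.\ the remark following~\eqref{eqnlghtem}). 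The strategy is thus to identify $H$ and check the criterion $\rho_{\gs h}\le\rho_{\gs g/\gs h}$ in each of the three cases.

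To identify $H$, I place the generic flags in coordinate form. In case~(1), assuming $k\ge l$, the subspaces $V_1=\langle e_1,\dots,e_k\rangle$ and $V_2=\langle e_1,\dots,e_{k-l},e_{k+1},\dots,e_n\rangle$ are in generic position, with joint stabilizer $\bigl(\begin{smallmatrix}*&*&*\\ 0&*&0\\ 0&0&*\end{smallmatrix}\bigr)$ of block sizes $(k-l,l,n-k)$, which is $H_{12}$ of Table~\ref{figslpqr} when $k>l$ and $H_4$ of Table~\ref{figslpq} when $k=l$. In case~(2), $V_1=\langle e_1,\dots,e_a\rangle$, $V_2=\langle e_1,\dots,e_{a+b}\rangle$, and $W=\langle e_{a+1},\dots,e_n\rangle$ are in generic position with joint stabilizer $\bigl(\begin{smallmatrix}*&0&0\\ 0&*&*\\ 0&0&*\end{smallmatrix}\bigr)$ of block sizes $(a,b,c)$. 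In case~(3), adjoining $V_1'=\langle e_{a+b+1},\dots,e_n\rangle$ and $V_2'=\langle e_{a+1},\dots,e_n\rangle$ to the flag of case~(2) gives two flags in general position whose joint stabilizer is block diagonal of sizes $(a,b,c)$, namely $H_{10}$ of Table~\ref{figslpqr}.

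For cases (1) and (3) the tables supply the inequalities immediately: $H_{12}$ gives $k-l\in\{0,1\}$ and $|l-(n-k)|\le 1$, which combines (using the symmetry $\Pi_{n-l,l}\simeq\Pi_{l,n-l}$) into $|k-l|\le 1$ and $|k+l-n|\le 1$; $H_{10}$ gives $a\le b+c+1$, $b\le a+c+1$, $c\le a+b+1$, equivalent to $2\max(a,b,c)\le n+1$. For case~(2), a direct computation of the weights of $Y=\mathrm{diag}(y_1,\dots,y_n)$ on $\g h$ and $\g g/\g h$ shows that the $(B,C)$-block contribution to $\rho_{\gs h}$ equals the $(C,B)$-block contribution to $\rho_{\gs g/\gs h}$, so these cancel and the criterion reduces to
\[
\sum_{\substack{i<j\\ \text{both in the same block}}}\!\!|y_i-y_j|\ \le\ \sum_{i\in A,\, j\in B}|y_i-y_j|+\sum_{i\in A,\, k\in C}|y_i-y_k|.
\]
Testing on $Y=e_a,\,e_{a+1},\,e_n$ yields the three necessary inequalities $a\le b+c+1$, $b\le a+1$, $c\le a+1$, i.e., $\max(b,c)-1\le a\le b+c+1$.

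The main obstacle is the sufficiency direction for case~(2), since its stabilizer is not tabulated verbatim. I will adapt the method of the proof of Corollary~\ref{corslpqr}: by $W_H$-invariance reduce to $Y$ with coordinates decreasing within each block $A$, $B$, $C$; express the difference (right-hand side minus left-hand side) as $\sum_i\ell_iy^A_i+\sum_j m_j y^B_j+\sum_k n_k y^C_k$ by expanding each $|y_i-y_j|$ according to the block-order of the indices; then verify $\ell_i,m_j,n_k\ge 0$ block-by-block from the three inequalities. Matching the dimension count $\dim G/H=\dim G/P_1+\dim G/P_2$ confirms in each case that the chosen representative lies in the open orbit.
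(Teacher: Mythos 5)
Your proposal is correct and follows essentially the same route as the paper: realize each tensor product as $L^2(G/H)$ for the stabilizer $H=w^{-1}Pw\cap P'$ of a generic pair of flags via Mackey theory and the open Bruhat cell, then read off the criterion $\rho_{\gs h}\le\rho_{\gs g/\gs h}$ from Tables 1 and 2. The only superfluous work is in case (2): your stabilizer $\bigl(\begin{smallmatrix}*&0&0\\ 0&*&*\\ 0&0&*\end{smallmatrix}\bigr)$ with blocks $(a,b,c)$ is conjugate by a block permutation to $H_{11}$ with $(p,q,r)=(b,a,c)$, so the computation you propose to adapt from Corollary \ref{corslpqr} is literally that corollary and need not be redone.
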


\begin{proof}
For any parabolic subgroups $P$ and $P'$ of $G$, there
exists an element $w \in G$ such that $P w P'$ is open dense in $G$,
and thus the tensor product $\Ind_{P}^G ({\bf 1}) \otimes \Ind_{P'}^G ({\bf 1})$
is unitarily equivalent to the regular representation in $L^2(G/H)$ 
by the Mackey theory, where $H=w^{-1}P w \cap P'$.
In the above cases, we have the following unitary equivalences:
\begin{alignat*}{2}
\Pi_{k,n-k} \otimes \Pi_{n-l,l} &\simeq L^2(G/H_{12})
\quad
&& 
\!\!\!\text{with } (p,q,r) =\!(|k\!-\!l|, \min(k,l), 
n\!-\!\max(k,l)), 
\\
\Pi_{a,b,c} \otimes \Pi_{b+c,a} &\simeq L^2(G/H_{11})
\quad
&& 
\!\!\!\text{with } (p,q,r)=(b,a,c), 
\\
\Pi_{a,b,c} \otimes \Pi_{c,b,a} &\simeq L^2(G/H_{10})
\quad
&& 
\!\!\!\text{with } (p,q,r)=(a,b,c),
\end{alignat*}
whence Proposition follows from Table \ref{figslpqr} in Section \ref{secnonred}.  
\end{proof}

\subsection*{Acknowledgments}

The authors are grateful to the IHES and to the University of Tokyo for its support through the GCOE program.
The second author was partially supported by Grant-in-Aid for Scientific Research 
(A) (25247006) JSPS.

{\small
\bibliography{temperedII}

\begin{thebibliography}{10}

\bibitem{BeHaVa}
B.~Bekka, P.~de~la Harpe, and A.~Valette.
\newblock {\em Kazhdan's {P}roperty ({T})}.
\newblock Camb. Univ. Press, 2008.

\bibitem{BeGu}
B.~Bekka and Y.~Guivarc'h.
\newblock On the spectral theory of groups of affine transformations of compact
  nilmanifolds.
\newblock {\em Ann.~Sci.~\'Ec.~Norm.~Sup\'er.}, 48:607--645, 2015.

\bibitem{BeKoI}
Y.~Benoist and T.~Kobayashi.
\newblock Tempered reductive homogeneous spaces.
\newblock {\em J. Eur. Math. Soc.}, 17:3015--3036, 2015.

\bibitem{BoFe}
T.~Bonnesen and W.~Fenchel.
\newblock {\em Theory of {C}onvex {B}odies}.
\newblock Springer, 1974.

\bibitem{Bou7a9}
N.~Bourbaki.
\newblock {\em Lie {G}roups and {L}ie Algebras. {C}hapters 7--9}.
\newblock Springer, 2005.

\bibitem{Che48}
C.~Chevalley.
\newblock Algebraic {L}ie algebras.
\newblock {\em Ann. of Math.}, 48:91--100, 1947.

\bibitem{CoGr}
L.~Corwin and F.~Greenleaf.
\newblock {\em Representations of Nilpotent {L}ie {G}roups and their
  {A}pplications}.
\newblock Camb. Univ. Press, 1990.

\bibitem{CHH}
M.~Cowling, U.~Haagerup, and R.~Howe.
\newblock Almost {$L^2$} matrix coefficients.
\newblock {\em J. Reine Angew. Math.}, 387:97--110, 1988.

\bibitem{Del98}
P.~Delorme.
\newblock Formule de {P}lancherel pour les espaces sym\'etriques r\'eductifs.
\newblock {\em Ann. of Math.}, 147:417--452, 1998.

\bibitem{EiMaVe}
M.~Einsiedler, G.~Margulis, and A.~Venkatesh.
\newblock Effective equidistribution for closed orbits of semisimple groups on
  homogeneous spaces.
\newblock {\em Invent. Math.}, 177(1):137--212, 2009.

\bibitem{FiMa}
D.~Fisher and G.~Margulis.
\newblock Local rigidity of affine actions of higher rank groups and lattices.
\newblock {\em Ann. of Math. (2)}, 170(1):67--122, 2009.

\bibitem{HaCh76}
Harish-Chandra.
\newblock Harmonic analysis on real reductive groups. {III}.
\newblock {\em Ann. of Math.}, 104:117--201, 1976.

\bibitem{HaTo}
B.~Harris and T.~Weich.
\newblock Wave front sets of reductive lie group representations iii.
\newblock {\em Adv.~Math.}, 313:176--236, 2017.

\bibitem{HoTa}
R.~Howe and E.-C. Tan.
\newblock {\em Nonabelian {H}armonic {A}nalysis}.
\newblock Universitext. Springer, 1992.

\bibitem{Kn01}
A.~Knapp.
\newblock {\em Representation {T}heory of {S}emisimple {G}roups}.
\newblock Princeton Landmarks in Mathematics. Princeton University Press,
  Princeton, NJ, 2001.

\bibitem{KnZu82}
A.~Knapp and G.~Zuckerman.
\newblock Classification of irreducible tempered representations of semisimple
  groups.
\newblock {\em Ann. of Math.}, 116:389--455, 1982.

\bibitem{kob92}
T.~Kobayashi.
\newblock Singular {U}nitary {R}epresentations and {D}iscrete {S}eries for
  {I}ndefinite {S}tiefel {M}anifolds ${U}(p,q;{F})/{U}(p-m,q; {F})$.
\newblock {\em Mem. Amer. Math. Soc.}, 95(462), 1992.

\bibitem{ko13fm}
T.~Kobayashi and T.~Oshima.
\newblock Finite multiplicity theorems for induction and restriction.
\newblock {\em Adv.~Math.}, 248:921--944, 2013.

\bibitem{Kos73}
B.~Kostant.
\newblock On convexity, the {W}eyl group and the {I}wasawa decomposition.
\newblock {\em Ann. Sci. \'{E}cole Norm. Sup.}, 6:413--455, 1973.

\bibitem{Lan89}
R.~Langlands.
\newblock On the classification of irreducible representations of real
  algebraic groups.
\newblock volume~31 of {\em Math. Surv. Mon.}, pages 101--170. AMS, 1989.

\bibitem{LiMa16}
H.~Li and G.~Margulis.
\newblock Effective estimates on integral quadratic forms: {M}asser's
  conjecture, generators of orthogonal groups, and bounds in reduction theory.
\newblock {\em Geom. Funct. Anal.}, 26(3):874--908, 2016.

\bibitem{Ma73}
G.~Margulis.
\newblock Explicit constructions of expanders.
\newblock {\em Problemy Pereda\v ci Informacii}, 9(4):71--80, 1973.

\bibitem{Ma97}
G.~Margulis.
\newblock Existence of compact quotients of homogeneous spaces, measurably
  proper actions, and decay of matrix coefficients.
\newblock {\em Bull. Soc. Math. France}, 125(3):447--456, 1997.

\bibitem{MaNeSt}
G.~Margulis, A.~Nevo, and E.~Stein.
\newblock Analogs of {W}iener's ergodic theorems for semisimple {L}ie groups.
  {II}.
\newblock {\em Duke Math. J.}, 103(2):233--259, 2000.

\bibitem{Nev98}
A.~Nevo.
\newblock Spectral transfer and pointwise ergodic theorems for semi-simple
  {K}azhdan groups.
\newblock {\em Math. Res. Lett.}, 5:305--325, 1998.

\bibitem{Oh1}
H.~Oh.
\newblock Uniform pointwise bounds for matrix coefficients of unitary
  representations and applications.
\newblock {\em Duke Math. J.}, 113:133--192, 2002.

\bibitem{Osh88}
T.~Oshima.
\newblock A method of harmonic analysis on semisimple symmetric spaces.
\newblock In {\em Algebraic {A}nalysis, {V}ol.\ {II}}, pages 667--680. Academic
  Press, 1988.

\bibitem{SaVe17}
Y.~Sakellaridis and A.~Venkatesh.
\newblock Periods and harmonic analysis on spherical varieties.
\newblock {\em Ast\'{e}risque}, (396):viii+360, 2017.

\bibitem{Vog79}
D.~Vogan, Jr.
\newblock The algebraic structure of the representation of semisimple {L}ie
  groups. {I}.
\newblock {\em Ann. of Math.}, 109:1--60, 1979.

\end{thebibliography}
}

{\small
\noindent
Y. \textsc{Benoist} :
CNRS-Universit\'e Paris-Saclay, Orsay, France\newline
e-mail : \texttt{yves.benoist@math.u-psud.fr}

\medskip
\noindent
T. \textsc{Kobayashi} :
Graduate School of Mathematical Sciences and 
Kavli IPMU-University of Tokyo, Komaba,  Japan\newline
e-mail : \texttt{toshi@ms.u-tokyo.ac.jp}
\end{document}